\tikzset{
  symbol/.style={
    draw=none,
    every to/.append style={
      edge node={node [sloped, allow upside down, auto=false]{$#1$}}}
  }
}
\tikzset{edge/.style= {line width=0.75pt} }
\tikzset{commutative diagrams/.cd,
	mysymbol/.style={start anchor=center,end anchor=center,draw=none}
}
\newtheoremstyle{break}
{\topsep}{\topsep}%
{\itshape}{}%
{\bfseries}{}%
{\newline}{}%
\theoremstyle{definition}
\newtheorem{definition}{Definition}[section]
\newtheorem{example}[definition]{Example}
\newtheorem{question}{Question}
\theoremstyle{plain}
\newtheorem{theorem}[definition]{Theorem}
\newtheorem{proposition}[definition]{Proposition}
\newtheorem{corollary}[definition]{Corollary}
\newtheorem{lemma}[definition]{Lemma}
\theoremstyle{remark}
\newtheorem{remark}[definition]{Remark}
\newtheorem{notation}[definition]{Notation}
\numberwithin{equation}{section}
\DeclareMathOperator{\gr}{gr}
\DeclareMathAlphabet\wiskunde{U}{msb}{m}{n}
\newcommand{\N}{{\wiskunde N}}
\newcommand{\Z}{{\wiskunde Z}}
\newcommand{\Q}{{\wiskunde Q}}
\newcommand{\C}{{\wiskunde C}}
\DeclareMathAlphabet\gotisch{U}{euf}{m}{n}
\newcommand{\n}{\mathfrak{n}}
\newcommand{\Aut}{{\rm Aut}}
\newcommand{\Gl}{{\rm GL}}
\newcommand{\Spec}{{\rm Spec}_R}
\newcommand{\Sp}{{\rm Sp}}
\newcommand{\Id}{{\rm Id}}
\renewcommand{\Im}{{\rm Im}}
\newcommand*{\olprec}{\mathbin{\overline\prec}}
\newcommand*{\olGamma}{\overline\Gamma}
\newcommand*{\olE}{\overline{E}}
\newcommand*{\olvarphi}{\overline\varphi}
\newcommand{\biggg}{\bBigg@\thr@@}
\newcommand{\Biggg}{\bBigg@{3}}
\renewcommand*\env@matrix[1][*\c@MaxMatrixCols c]{%
  \hskip -\arraycolsep
  \let\@ifnextchar\new@ifnextchar
  \array{#1}}
\title{$R_{\infty}$--property for groups commensurable to nilpotent quotients of RAAGs}
\author{Maarten Lathouwers\thanks{KU Leuven Kulak Kortrijk Campus, Department of Mathematics. Email: maarten.lathouwers@kuleuven.be. The author was supported by a PhD fellowship of the Research Fund - Flanders (FWO), Grant Number 1102424N.}, Thomas Witdouck \thanks{KU Leuven Kulak Kortrijk Campus, Department of Mathematics. Email: thomas.witdouck@kuleuven.be. The author was supported by a PhD fellowship of the Research Fund - Flanders (FWO), Grant Number 1153122N.}}
\date{\today}
\begin{document}
\maketitle

\begin{abstract}
	Let $G$ be a group and $\varphi$ an automorphism of $G$. Two elements $x,y \in G$ are said to be $\varphi$-conjugate if there exists a third element $z \in G$ such that $z x \varphi(z)^{-1} = y$. Being $\varphi$-conjugate defines an equivalence relation on $G$. The group $G$ is said to have the $R_\infty$--property if all its automorphisms $\varphi$ have infinitely many $\varphi$-conjugacy classes. For finitely generated torsion-free nilpotent groups, the so-called Mal'cev completion of the group is a useful tool in studying this property. Two groups have isomorphic Mal'cev completions if and only if they are abstractly commensurable. This raises the question whether the $R_\infty$--property is invariant under abstract commensurability within the class of finitely generated torsion-free nilpotent groups. We show that the answer to this question is negative and provide counterexamples within a class of 2-step nilpotent groups associated to edge-weighted graphs. These groups are commensurable to 2-step nilpotent quotients of right-angled Artin groups.
\end{abstract}

\section{Introduction}

Let $G$ and $H$ be groups. We say $G$ is \textit{abstractly commensurable with} $H$ if there exist finite index subgroups $G' \subset G$ and $H' \subset H$ such that $G'$ is isomorphic to $H'$. The starting point of this paper is the following question:

\begin{question}
	Is the $R_\infty$--property for groups an abstract commensurability invariant?
\end{question}

It is known however that the answer to this question is negative. The easiest counterexample is given by the fundamental group of the Klein bottle. This group has the $R_\infty$--property, while it has a subgroup of index 2 isomorphic to $\Z^2$ which is known not to have the $R_\infty$--property, see \cite[Section 2]{gw09-1}. Note that the fundamental group of the Klein bottle is torsion-free virtually abelian, but not nilpotent. A new interesting question arises when we ask our groups to be finitely generated torsion-free nilpotent.
\begin{question}
    \label{question2}
	Is the $R_\infty$--property for groups an abstract commensurability invariant within the class of finitely generated torsion-free nilpotent groups?
\end{question}

The reason for considering finitely generated torsion-free nilpotent groups is that there is a well-known abstract commensurability invariant within this class of groups, namely the \textit{rational Mal'cev completion} (see section \ref{sec:LieAlgAssToGrps}). Two finitely generated torsion-free nilpotent groups are abstractly commensurable if and only if they have isomorphic rational Mal'cev completions. In an attempt to answer Question \ref{question2}, we arrived at the following observation. We recall that a linear self-map on a finite-dimensional vector space is called \textit{integer-like} if its characteristic polynomial has integral coefficients and constant term equal to $\pm 1$.

\begin{proposition}
    \label{prop:RinftyAndCommensurability}
	Let $G$ be a finitely generated torsion-free nilpotent group. All groups that are abstractly commensurable to $G$ have the $R_\infty$--property if and only if every integer-like automorphism on the associated Mal'cev rational Lie algebra has an eigenvalue 1.
\end{proposition}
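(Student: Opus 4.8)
The plan is to reduce both sides of the equivalence to a single statement about the Mal'cev rational Lie algebra $\mathfrak{g}$ of $G$, combining two classical inputs: the finiteness criterion for Reidemeister numbers in the nilpotent setting, and the Mal'cev correspondence. Write $R(\varphi)$ for the number of $\varphi$-conjugacy classes of a group automorphism $\varphi$. I recall (and would cite) that for $N$ finitely generated torsion-free nilpotent, every $\varphi \in \Aut(N)$ induces an automorphism $\varphi_*$ of the Mal'cev rational Lie algebra of $N$, with $R(\varphi) < \infty$ if and only if $1$ is not an eigenvalue of $\varphi_*$; and that two such groups are abstractly commensurable exactly when their Mal'cev rational Lie algebras are isomorphic. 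After fixing an identification, the groups abstractly commensurable to $G$ are therefore, up to isomorphism, precisely the finitely generated torsion-free nilpotent groups whose Mal'cev rational Lie algebra is $\mathfrak{g}$.

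The structural heart of the proof is the claim that
\[ \mathcal{A} := \{\,\varphi_* \mid N \text{ abstractly commensurable to } G,\ \varphi \in \Aut(N)\,\} \subseteq \Aut(\mathfrak{g}) \]
is \emph{exactly} the set of integer-like automorphisms of $\mathfrak{g}$. For ``$\subseteq$'': using an isolator-refined lower central series of $N$, whose successive quotients are free abelian groups $\Z^{d_j}$, the automorphism $\varphi$ acts on the $j$-th quotient through a matrix in $\Gl(d_j,\Z)$, and $\varphi_*$ is block-triangular for the induced filtration of $\mathfrak{g}$ with (the rationalizations of) these matrices on the diagonal; hence the characteristic polynomial of $\varphi_*$ is a product of monic integer polynomials with constant terms $\pm 1$, so it is integral with constant term $\pm 1$. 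For ``$\supseteq$'': given integer-like $\psi \in \Aut(\mathfrak{g})$, fix a basis of $\mathfrak{g}$ and let $\Lambda_0$ be the $\Z[\psi]$-submodule it generates; Cayley--Hamilton (the characteristic polynomial of $\psi$ being monic integral) shows $\Lambda_0$ is a full-rank lattice in $\mathfrak{g}$, and its constant term being $\pm 1$ gives $\psi^{-1} \in \Z[\psi]$, so $\psi(\Lambda_0) = \Lambda_0$. Rescaling $\Lambda_0$ to $m\Lambda_0$ for a sufficiently divisible $m$ turns it into a Lie subring on which the Baker--Campbell--Hausdorff product is integral, still $\psi$-stable; then $N := \exp(m\Lambda_0)$ is finitely generated torsion-free nilpotent with Mal'cev rational Lie algebra $\mathfrak{g}$ (so abstractly commensurable to $G$), and $\varphi := \exp \circ\, \psi|_{m\Lambda_0} \circ \log$ is an automorphism of $N$ with $\varphi_* = \psi$.

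Putting the pieces together, ``all groups abstractly commensurable to $G$ have the $R_\infty$--property'' is equivalent to ``$R(\varphi) = \infty$ for all $N$ abstractly commensurable to $G$ and all $\varphi \in \Aut(N)$'', which by the Reidemeister criterion is ``every $\psi \in \mathcal{A}$ has eigenvalue $1$'', which by the displayed claim is ``every integer-like automorphism of $\mathfrak{g}$ has eigenvalue $1$'' — so the biconditional follows directly from this chain of equivalences. I expect the main obstacle to be the realization direction ``$\supseteq$'': one must verify carefully that the Cayley--Hamilton lattice can be rescaled to an honest Mal'cev lattice — a Lie subring whose exponential is a group in the commensurability class of $G$ — while staying $\psi$-invariant. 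Everything else is bookkeeping around the two cited results and the elementary linear algebra underlying ``$\subseteq$''.
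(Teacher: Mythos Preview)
Your proposal is correct and follows essentially the same approach as the paper: both directions hinge on the eigenvalue-1 criterion for $R(\varphi)=\infty$ (Lemma~\ref{lem:eigenvalue1}) together with the realization of an arbitrary integer-like automorphism of $M(G)$ as the extension of an automorphism of some group in the commensurability class, obtained by producing a $\psi$-invariant full-rank lattice and scaling it until the BCH formula closes up. Your write-up is somewhat more explicit in places---the Cayley--Hamilton construction of the invariant lattice for ``$\supseteq$'' and the block-triangular argument via the adapted lower central series for ``$\subseteq$''---where the paper simply invokes these as known properties of integer-like automorphisms.
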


The actual answer to Question 2 turns out to be negative and we will present a counterexample within the class of 2-step nilpotent quotients of right-angled Artin groups. Using weight functions on the edges of the defining graphs, one can define groups which are commensurable to these 2-step nilpotent quotients of right-angled Artin groups, but which have considerably less induced automorphisms on the abelianization. These groups are defined in section \ref{sec:weightedRAAG} and the restrictions on their automorphisms are proven in section \ref{sec:automorpismsOfWeightedRAAG}. The (to our knowledge) smallest concrete counterexample to Question \ref{question2} is then presented in \ref{ex:mainCounterexample}, which proves the following. 

\begin{theorem}
    There exist finitely generated torsion-free 2-step nilpotent groups which do not have the $R_\infty$--property but have for any $n \in \N \setminus \{0,1\}$ a subgroup of index $n$ which does have the $R_\infty$--property.
\end{theorem}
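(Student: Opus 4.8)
The plan is to prove both halves of the statement by analysing the single explicit group $G$ of Example~\ref{ex:mainCounterexample} --- a finitely generated torsion-free $2$-step nilpotent group attached to an edge-weighted graph $\Gamma$, hence commensurable to the $2$-step nilpotent quotient of the right-angled Artin group on $\Gamma$ --- together with, for each $n \in \N \setminus \{0,1\}$, an explicit normal subgroup $H_{n} \trianglelefteq G$ of index $n$. The engine behind both verifications is the classical criterion also underlying Proposition~\ref{prop:RinftyAndCommensurability}: for a finitely generated torsion-free nilpotent group $N$ with rational Mal'cev Lie algebra $\mathfrak{g}$, an automorphism $\varphi$ has finite Reidemeister number if and only if $\varphi_{*} \in \Aut(\mathfrak{g})$ does not have $1$ as an eigenvalue, so that $N$ has the $R_{\infty}$--property if and only if every automorphism in the image of $\Aut(N) \to \Aut(\mathfrak{g})$ has $1$ as an eigenvalue. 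Here $\mathfrak{g}$ is $2$-step: as a vector space $\mathfrak{g} = V \oplus W$ with $W = [\mathfrak{g},\mathfrak{g}]$ central and $[V,V] = W$ ($V$ supported on the vertices of $\Gamma$, $W$ on the non-edges); every automorphism preserves $W$, and the induced map on $W$ is determined through the bracket by the induced map on $\mathfrak{g}/W$. So it suffices throughout to control automorphisms on $\mathfrak{g}/W$ and then propagate to $W$.

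For the first half I would invoke the description of $\Aut(G)$ from section~\ref{sec:automorpismsOfWeightedRAAG} --- already heavily restricted by the edge weights --- and produce one automorphism $\varphi$ of $G$ whose induced map $A := \varphi_{*}$ nevertheless escapes the eigenvalue $1$. The natural candidate exploits a pair of non-adjacent vertices of $\Gamma$ with identical neighbourhoods, which contribute a transvection-type $\Gl_{2}(\Z)$-block to $\Aut(G)$: one lets $A$ act there by a hyperbolic or elliptic matrix such as $\left(\begin{smallmatrix} 1 & 1 \\ 1 & 0 \end{smallmatrix}\right)$ and by suitable signs on the remaining vertices, arranging that $A$ has no eigenvalue $1$ on $\mathfrak{g}/W$; a short finite list of determinant and sign computations then shows that the induced map on each commutator generator of $W$ (one per non-edge of $\Gamma$) likewise avoids $1$. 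By the criterion above $R(\varphi) < \infty$, so $G$ does not have the $R_{\infty}$--property.

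For the second half, fix $n \in \N \setminus \{0,1\}$, let $f \colon G \twoheadrightarrow \Z$ be a well-chosen quotient through the abelianization, and take $H_{n} := \ker\bigl(G \overset{f}{\twoheadrightarrow} \Z \to \Z/n\Z\bigr)$, which is normal of index exactly $n$ and has the same rational Mal'cev Lie algebra $\mathfrak{g}$. I would then show that $H_{n}$ has the $R_{\infty}$--property by proving that every $\psi \in \Aut(H_{n})$ induces a Lie algebra automorphism $\psi_{*}$ with eigenvalue $1$. Such a $\psi_{*} \in \Aut(\mathfrak{g})$ must stabilise the lattice $\log H_{n}$; over $\Q$ it still obeys every constraint coming from the edge weights (section~\ref{sec:automorpismsOfWeightedRAAG}), while the extra requirement of preserving $\log H_{n}$ --- thinner than $\log G$ by the factor $n$ in the direction of $f$ --- is precisely what is designed to eliminate the weight-admissible automorphisms without eigenvalue $1$, by constraining modulo $n$ a scalar or block that the weights alone leave too free. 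One has to carry this out both on $\mathfrak{g}/W$ and on $W$, and for every $n \geq 2$; once done, $R(\psi) = \infty$ for all $\psi \in \Aut(H_{n})$, so $H_{n}$ has the $R_{\infty}$--property.

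The hard part --- and the reason the choices of $\Gamma$, of the edge weights, and of $f$ must all be made in concert --- is reconciling the two halves uniformly in $n$: $\Gamma$ must be irreducible (so that $G$ does not split off a Heisenberg-type direct factor, which would sabotage part (ii)), $\Aut(G)$ must stay just large enough to contain the eigenvalue-$1$-free automorphism of the first half, and yet for every $n \geq 2$ the single congruence cutting out $H_{n}$ must already annihilate all such automorphisms. Producing a graph small enough that $\Aut(G)$ is explicitly computable while rigid enough for this collapse to occur is what sections~\ref{sec:weightedRAAG}--\ref{sec:automorpismsOfWeightedRAAG} and Example~\ref{ex:mainCounterexample} are built to do; the longest routine step will be the eigenvalue bookkeeping on $W$ --- of dimension the number of non-edges of $\Gamma$ --- under the induced action on the commutator subalgebra.
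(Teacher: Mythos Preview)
Your skeleton is reasonable, but there are two genuine gaps and a significant divergence from the paper's route.

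\textbf{First half.} For the graph of Example~\ref{ex:mainCounterexample}, an automorphism built from a single $\Gl_2(\Z)$-block and signs on the remaining vertices cannot avoid eigenvalue $1$. The three vertices $v_7,v_8,v_9$ form singleton coherent components, so on each of them the block-diagonal part of any graded automorphism is just $\pm 1$; to avoid eigenvalue $1$ there you must take all three signs equal to $-1$, but then $[v_7,v_8]\mapsto(-1)(-1)[v_7,v_8]$ has eigenvalue $1$ on $W$. The paper's automorphism is not of your shape: it combines $\Gl_2$-blocks on \emph{two} of the three size-$2$ components with the graph involution $\sigma=(v_1\,v_5)(v_2\,v_6)(v_7\,v_9)$, so that the restriction to $\Sp\{v_7,v_8,v_9\}$ has eigenvalues $-1,\pm i$ and the induced map on $\Sp\{e_7,e_8\}$ has eigenvalues $\pm i$. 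The graph symmetry is essential, not optional.

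\textbf{Second half.} The paper does \emph{not} pass to a congruence subgroup $H_n=\ker(G\to\Z/n\Z)$; it changes the weight function. For each $n$ it takes $k_n$ placing weight $n$ on the single edge $e_7=\{v_7,v_8\}\in E_0$ and weight $1$ elsewhere, and shows via Theorem~\ref{thm:MainResult}(ii) that $G_{\Gamma(k_n)}$ has $R_\infty$: the determinant-divisor constraint of Lemma~\ref{lem:RestrictionsWeightsUsingDeterminantDivisors} gives $d_1([e_7])=n\neq 1=d_1([e_8])$, so every automorphism of $G_{\Gamma(k_n)}$ induces a permutation of $\olGamma$ fixing the edge $\{[v_7],[v_8]\}$; since both endpoints are singleton components, the semisimple part restricted to $\Sp_\Q\{v_7,v_8\}$ is a signed permutation of a two-element set, which always yields eigenvalue $1$ either on that plane or on $[v_7,v_8]$. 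Your sentence ``preserving $\log H_n$ \ldots is precisely what is designed to eliminate the weight-admissible automorphisms'' is not a proof: you have neither specified $f$ nor explained any mechanism. In fact your $H_n$ can be made to work---taking $f$ the projection onto $v_7$ gives $H_n\cong G_{\Gamma(k')}$ with weight $n$ on $e_1,e_2,e_7$, and then the \emph{same} determinant-divisor argument applies---but you have not recognised $H_n$ as a weighted group or invoked that machinery.

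Minor: in the paper's convention edges encode non-commuting pairs, so $W=[\mathfrak g,\mathfrak g]$ is supported on the \emph{edges}, not the non-edges; and the group $G=G_\Gamma$ without $R_\infty$ carries the trivial weight function, so the ``constraints from the edge weights'' you repeatedly invoke for $\Aut(G)$ are vacuous until one passes to a weighted group.
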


\section{Preliminaries}

First, let us fix some notation. For any group $G$ we define the \textit{lower central series} of $G$ inductively by setting $\gamma_1(G)=G$ and $\gamma_{i+1}(G)=[\gamma_i(G),G]$ (for all $i\in \N_0$). For a Lie algebra $L$, we use the same notation for the lower central series, i.e. $\gamma_1(L) = L$ and $\gamma_{i+1}(L) = [L, \gamma_i(L)]$ (for all $i\in \N_0$).

\subsection{Lie algebras associated to groups}
\label{sec:LieAlgAssToGrps}
In this section, we recall two well-known constructions to associate a Lie algebra to a group and how they relate to one another.


\paragraph{The Lie ring construction} Let $G$ be a group. Denote the factors of the lower central series of $G$ by $L_i(G):=\gamma_i(G)/\gamma_{i+1}(G)$. The quotients are abelian (and thus $\Z$-modules). We define the Lie ring $L(G)$ as the direct sum of these $\Z$-modules, i.e.
\[ L(G):=\bigoplus_{i=1}^{\infty} L_i(G)\]
and the Lie bracket on $L(G)$ is induced by setting (for $i,j\in \N_0$, $g\in \gamma_i(G)$ and $h\in \gamma_j(G)$)
\[ [g\gamma_{i+1}(G),h\gamma_{j+1}(G)]:=[g,h]\gamma_{i+j+1}(G).\]
In order to turn $L(G)$ into a Lie algebra over a field $K$, it suffices to tensor $L(G)$ with $K$. We define $L^K(G):=L(G)\otimes_{\Z} K=\oplus_{i=1}^{\infty}L_i^K(G)$ where $L_i^K(G):=L_i(G)\otimes_{\Z}K$ (for all $i\in \N_0$). Moreover, $L^K(G)$ is clearly $\N_0$-graded (by using the $L_i^K(G)$) and if $G$ is finitely generated, the $L_i^K(G)$ are finite dimensional. The lower central series of $L^K(G)$ is given by
\[ \gamma_i(L^K(G))=\bigoplus_{j=i}^{\infty} L_j^K(G)\]
for all $i\in \N_0$.\\
Note that this way of associating a Lie algebra to a group is functorial. In particular, any automorphism of the group $G$ induces a graded Lie algebra automorphism of $L^K(G)$  (i.e. an automorphism which leaves the subspaces $L_i^K(G)$ invariant). Indeed, fix an automorphism $\varphi\in \Aut(G)$. Since the lower central series of $G$ is characteristic, we obtain induced automorphisms $\varphi_i\in \Aut(L_i(G))$ (for all $i\in \N_0$). These $\Z$-module automorphisms extend to automorphisms $\overline{\varphi_i}:=\varphi_i\otimes \Id$ of $L_i^K(G)$. Combining them yields a graded Lie algebra automorphism $\overline{\varphi}\in \Aut_g(L^K(G))$, i.e. we set $\overline{\varphi}(x)=\overline{\varphi_i}(x)$ for each $x\in L_i^K(G)$.

\paragraph{The Mal'cev completion} A group $G$ is said to be \textit{radicable} if the assignment $x \mapsto x^n$ defines a bijection on $G$ for any non-zero integer $n$. If $G$ is a finitely generated torsion-free nilpotent group, there exists a radicable group $G^\Q$ of which $G$ is a subgroup, such that for any $x \in G^\Q$ there is a positive integer $n > 0$ such that $x^n \in G$. The group $G^\Q$ is nilpotent and unique, in the sense that, if there exists another group with the same properties, it is isomorphic to $G^\Q$ with the isomorphism restricting to the identity on $G$. The group $G^\Q$ is called the \textit{rational Mal'cev completion} of $G$.\\
The group $G^\Q$ can also be given the structure of a nilpotent Lie algebra over $\Q$ by use of the Baker-Campbell-Hausdorff formula. We will refer to this Lie algebra as the \textit{Mal'cev Lie algebra} of $G$ and write it as $M(G)$. Note that $G^\Q$ and $M(G)$ have the same underlying sets. It is well-known that two finitely generated torsion-free nilpotent groups have isomorphic Mal'cev completions (or, equivalently, isomorphic Mal'cev Lie algebras) if and only if they are abstractly commensurable. Moreover, if $G$ and $H$ are finitely generated torsion-free nilpotent groups with isomorphic finite index subgroups $G' \subset G$ and $H'\subset H$, then there is a unique isomorphism between $M(G)$ and $M(H)$ which restricts to the given isomorphism between $G'$ and $H'$.\\
As with the graded Lie ring construction, the association of the rational Mal'cev completion to a finitely generated torsion-free nilpotent group is functorial. Any automorphism of $G$ extends uniquely to an automorphism of $G^\Q$ which in turn defines an automorphism of the \mbox{Lie algebra $M(G)$}.

Next, we recall a known result that relates $L^\Q(G)$ and $M(G)$ for a finitely generated torsion-free nilpotent group $G$. Let $L$ be any Lie algebra. Similar to the Lie ring construction from above, one can define from $L$ a new $\N_{0}$-graded Lie algebra $\gr(L)$ by
\[ \gr(L) = \bigoplus_{i = 1}^\infty \gamma_i(L)/\gamma_{i+1}(L) \]
with Lie bracket defined by $[ X + \gamma_{i+1}(L), \, Y + \gamma_{j+1}(L)] = [X, Y] + \gamma_{i + j + 1}(L)$ for $i,j\in \N_0$ and any $X \in \gamma_i(L)$ and $Y \in \gamma_j(L)$.\\
In \cite[Lemma 3.3]{quil68} it is proven that for any finitely generated torsion-free nilpotent group $G$, the injection of $G$ into its rational Mal'cev completion $G^\Q$ induces an isomorphism of graded Lie algebras

\begin{equation}
    \label{eq:isoMalcevGraded}
    L^\Q(G) \cong \gr(M(G)).
\end{equation}

From this result, it is not hard to see that, given an automorphism of $G$, the induced automorphisms on $L^\Q(G)$ and $M(G)$ have the same eigenvalues. Moreover, there is a very nice connection between the eigenvalues of these induced automorphisms and the Reidemeister number of the original automorphism.

\begin{lemma}[\cite{roma11-1},\cite{dg14-1}]\label{lem:eigenvalue1}
	Let $G$ be a finitely generated nilpotent group and $\varphi\in \Aut(G)$. Let $\overline{\varphi}\in \Aut_g(L^K(G))$ be the induced graded automorphism on $L^K(G)$ and $\varphi'\in \Aut(M(G))$ the induced automorphism on $M(G)$. Then
        \[R(\varphi)=\infty \quad \Longleftrightarrow\quad \overline{\varphi} \text{ has eigenvalue } 1\quad \Longleftrightarrow\quad \varphi' \text{ has eigenvalue } 1.\]
\end{lemma}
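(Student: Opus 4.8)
The plan is to split the statement into the formal equivalence ``$\overline{\varphi}$ has eigenvalue $1$ $\Leftrightarrow$ $\varphi'$ has eigenvalue $1$'' and the substantive equivalence ``$R(\varphi)=\infty$ $\Leftrightarrow$ $\varphi'$ has eigenvalue $1$''. The first amounts to linear algebra applied to the isomorphism \eqref{eq:isoMalcevGraded}, whereas the second I would prove by induction on the Hirsch length, via a central-extension argument for Reidemeister numbers.

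For the first equivalence I would argue as follows. The automorphism induced by $\varphi$ on the \emph{integral} Lie ring $L(G)=\bigoplus_i \gamma_i(G)/\gamma_{i+1}(G)$ has characteristic polynomial with coefficients in $\Z$, so whether $\overline{\varphi}$ has eigenvalue $1$ is independent of the field $K$, and we may take $K=\Q$. Since \eqref{eq:isoMalcevGraded} is natural in $G$, it conjugates $\overline{\varphi}$ to the graded automorphism $\gr(\varphi')$ induced by $\varphi'$ on $\gr(M(G))$. Finally $\varphi'$ preserves the finite filtration of the finite-dimensional $\Q$-vector space $M(G)$ by the terms $\gamma_i(M(G))$ of its lower central series, and a filtration-preserving operator has the same characteristic polynomial as its associated graded operator; hence $\overline{\varphi}$, $\gr(\varphi')$ and $\varphi'$ share the same multiset of eigenvalues, so one has eigenvalue $1$ iff all do.

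For the second equivalence I would induct on $h(G)$, the base case being $G$ abelian: there $G\cong\Z^n$, $M(G)\cong\Q^n$ with $\varphi'=\varphi\otimes\Q$, and the classical computation gives $R(\varphi)=\lvert\det(\varphi-\Id)\rvert$ when this is nonzero and $R(\varphi)=\infty$ precisely when $\det(\varphi-\Id)=0$, i.e. precisely when $1$ is an eigenvalue. If $G$ is not abelian, choose $A\leq Z(G)$ to be the isolator in $G$ of the last nontrivial term of the lower central series; this is a nontrivial proper characteristic central subgroup with $Q:=G/A$ finitely generated torsion-free nilpotent of strictly smaller Hirsch length. Write $\varphi_A=\varphi|_A$ and let $\varphi_Q$ be the induced automorphism of $Q$. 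Since $A$ is central, $M(A)\cong A\otimes\Q$ is an ideal of $M(G)$ with $M(G)/M(A)\cong M(Q)$ compatibly with the induced automorphisms, so the eigenvalues of $\varphi'$ are exactly those of $\varphi_A'$ together with those of $\varphi_Q'$; and if $\varphi_Q'$ has no eigenvalue $1$ then, by the first part applied to $Q$, neither does $\overline{\varphi_Q}$, whence $\Fix(\varphi_Q)=\{1\}$ (a nontrivial fixed element, being of infinite order, would be detected in some graded piece of $L^\Q(Q)$).

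The inductive step then analyses the canonical surjection $p_\ast\colon \mathcal{R}(\varphi)\to\mathcal{R}(\varphi_Q)$ of Reidemeister classes induced by $p\colon G\to Q$. If $R(\varphi_Q)=\infty$ then $R(\varphi)\ge R(\varphi_Q)=\infty$, and by induction $\varphi_Q'$, hence $\varphi'$, has eigenvalue $1$, so both sides hold. If $R(\varphi_Q)<\infty$, then by induction $\varphi_Q'$ has no eigenvalue $1$, so $\varphi'$ has eigenvalue $1$ iff $\varphi_A'$ does iff (base case on $A\cong\Z^k$) $\det(\varphi_A-\Id)=0$; it remains to show $R(\varphi)=\infty\Leftrightarrow\det(\varphi_A-\Id)=0$ under this hypothesis. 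Here one computes, using centrality of $A$, that the fibre of $p_\ast$ over a class $[\bar x]$ is $A/B_{\bar x}$ for a subgroup $B_{\bar x}$ with $\Im(\varphi_A-\Id)\subseteq B_{\bar x}\subseteq A$ — obtained as the image of an $A$-valued homomorphism defined on a lift of the twisted centraliser $\{\bar z\in Q:\bar z\bar x\varphi_Q(\bar z)^{-1}=\bar x\}$ — and that over the identity class $B_{\bar x}=\Im(\varphi_A-\Id)$ because $\Fix(\varphi_Q)=\{1\}$. Consequently, if $\det(\varphi_A-\Id)=0$ this fibre is already infinite and $R(\varphi)=\infty$, while if $\det(\varphi_A-\Id)\ne 0$ then $R(\varphi)=\sum_{[\bar x]}\lvert A/B_{\bar x}\rvert\le R(\varphi_Q)\cdot\lvert\det(\varphi_A-\Id)\rvert<\infty$. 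I expect the genuine obstacle to be exactly this fibrewise bookkeeping: defining $B_{\bar x}$ correctly, verifying it is a subgroup sandwiched as claimed, and pinning down the identity-class fibre as the full cokernel of $\varphi_A-\Id$; the remaining steps are either formal or the classical abelian computation.
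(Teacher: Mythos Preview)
The paper does not give a proof of this lemma; it is quoted with citations to \cite{roma11-1} and \cite{dg14-1} and used as a black box throughout. There is therefore no ``paper's own proof'' to compare against. Your proposal is essentially correct and follows the standard route of those references: the equality of eigenvalues of $\overline{\varphi}$ and $\varphi'$ via the filtration/associated-graded argument combined with \eqref{eq:isoMalcevGraded}, and the equivalence with $R(\varphi)=\infty$ by induction on the Hirsch length through a central extension, using the abelian formula $R(\psi)=\lvert\det(\psi-\Id)\rvert$ (or $\infty$) as the base case and the addition-of-fibres computation for the inductive step. The fibrewise bookkeeping you flag as the crux is exactly right: the map $\bar z_0\mapsto x^{-1}z_0x\varphi(z_0)^{-1}$ descends to a well-defined homomorphism from the $\varphi_Q$-twisted centraliser of $\bar x$ into $A/\Im(\varphi_A-\Id)$, so $B_{\bar x}$ is a subgroup sandwiched as claimed, and for $\bar x=1$ the twisted centraliser is $\Fix(\varphi_Q)=\{1\}$, giving the full cokernel.

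Two minor points worth tightening. First, the lemma is stated for finitely generated nilpotent $G$, not torsion-free; you should preface the argument by passing to $G/\tau(G)$ for the finite torsion subgroup $\tau(G)$, which changes neither $M(G)$ nor $L^\Q(G)$ nor the finiteness of $R(\varphi)$. Second, your parenthetical justification of $\Fix(\varphi_Q)=\{1\}$ via ``detected in some graded piece of $L^\Q(Q)$'' is slightly delicate because the quotients $\gamma_i(Q)/\gamma_{i+1}(Q)$ can have torsion; a cleaner argument is that a nontrivial fixed $x\in Q$ embeds as a nonzero element of $Q^\Q=M(Q)$ fixed by $\varphi_Q'$, directly giving the forbidden eigenvalue~$1$.
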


\subsection{(Nilpotent) partially commutative groups and Lie algebras}

In this section, we introduce groups and Lie algebras that will be used throughout this paper.


Let $\Gamma=(V,E)$ be a finite undirected simple graph with (finite) vertex set $V$ and edge set $E\subset \{\{v,w\}\mid v,w\in V \text{ and } v\neq w\}$. From now on, we always assume a graph $\Gamma$ to be finite, undirected and simple. The \textit{free partially commutative group} $A(\Gamma)$ associated to the graph $\Gamma$ is defined by
\[ A(\Gamma):=\langle V\mid [v,w]=1 \text{ for } v,w\in V \text{ with } \{v,w\}\not\in E \rangle.\]
These groups are also known as \textit{right-angled Artin groups} or \textit{graph groups}. Sometimes the complement graph is used to define $A(\Gamma)$ (i.e. edges become non-edges in our definition). Nevertheless, our convention will come in handy later when generalising to groups associated to graphs with weights on their edges. By taking the quotient of $A(\Gamma)$ with $\gamma_{c+1}(A(\Gamma))$ (with $c>1$) we get a finitely generated torsion-free c-step nilpotent group \[A(\Gamma,c):=A(\Gamma)/\gamma_{c+1}(A(\Gamma)).\]
We will mostly use the 2-step nilpotent quotient and thus for simplicity, we write \[G_{\Gamma}:=A(\Gamma,2).\]

Next to a graph $\Gamma$, now also fix a field $K$. We can associate, similarly as with $A(\Gamma)$, a Lie algebra $L^{K}(\Gamma)$ (over $K$) to the graph $\Gamma$. Denote with $\mathfrak{f}^K(V)$ the free Lie algebra on $V$ over $K$ and with $I(\Gamma)$ the ideal of $\mathfrak{f}^K(V)$ generated by the set of Lie brackets $\{[v,w]\mid v,w\in V,\: \{v,w\}\not\in E\}$. The Lie algebra $L^K(\Gamma)$, which is also known as the \textit{free partially commutative Lie algebra over $K$}, is defined as the quotient of $\mathfrak{f}^K(V)$ by $I(V)$. Similarly, we get the \textit{free $c$-step nilpotent partially commutative Lie algebra} $L^K(\Gamma,c)$ by taking the quotient of $L^K(\Gamma)$ with $\gamma_{c+1}(L^K(\Gamma))$. Summarising, we defined
\[ L^K(\Gamma):=\frac{\mathfrak{f}^K(V)}{I(\Gamma)}\quad \text{and}\quad L^K(\Gamma,c):=\frac{L^K(\Gamma)}{\gamma_{c + 1}(L^K(\Gamma))}.\]

The free Lie algebra $\mathfrak{f}^K(V)$ is canonically graded by defining inductively $\mathfrak{f}_1^K(V):= \Sp_K(V)$ (i.e. the vector space span of $V$ over $K$) and $\mathfrak{f}_{i+1}^K(V):=[\mathfrak{f}_1^K(V),\mathfrak{f}_i^K(V)]$ (for $i\in \N_0$). Since the ideal $I(\Gamma)$ is a graded ideal $\mathfrak{f}^K(V)$ it follows that $L(\Gamma, c)$ inherits an $\N_0$-grading from $\mathfrak{f}^K(V)$. We denote the subspaces that make up this grading by $L_i^K(\Gamma,c)$ ($ i \in \N_0$). Note that $L_i^K(\Gamma,c)=\{0\}$ for $i>c$.

A classical result from \cite[Theorem 2.1]{dk92} establishes a connection between $A(\Gamma)$ and $L^K(\Gamma)$ using the Lie ring construction of the previous section. Concretely, there is an isomorphism of graded Lie algebras
\begin{equation}
    \label{eq:isomorphismRAAGLieAlgGrp}
    L^K(\Gamma) \cong L^K(A(\Gamma))
\end{equation}
which is determined by taking the identity on the vertices $V$.

\begin{remark}\label{rem:RestrictionToAbelianization}
	Note that the subspace $\Sp_K(V)$ generates $L^K(\Gamma,c)$. In particular, any graded automorphism $\varphi\in \Aut_g(L^K(\Gamma,c))$ is completely determined by its restriction to $\Sp_K(V)$. So if we denote with $\pi:\Aut_g(L^K(\Gamma,2))\to \Gl(\Sp_K(V)):\varphi\mapsto \varphi|_{\Sp_K(V)}$, then $\pi$ is an isomorphism onto its image.
\end{remark}


\section{The \texorpdfstring{$R_\infty$}{Roo}-property and commensurability}
In this section, we prove Proposition \ref{prop:RinftyAndCommensurability} from the introduction and apply it to the class of free nilpotent partially commutative groups.
\begin{proof}[Proof of Proposition \ref{prop:RinftyAndCommensurability}]
    Let $G$ be a finitely generated torsion-free nilpotent group, $G^\Q$ its Mal'cev completion and $M(G)$ the Mal'cev Lie algebra.
    
    First, assume that every integer-like automorphism of $M(G)$ has an eigenvalue 1. Let $H$ be a group commensurable with $G$ and $\varphi$ any automorphism of $H$. Then $\varphi$ induces an automorphism on the Mal'cev Lie algebra of $H$, which is isomorphic to $M(G)$. Moreover, this automorphism must be integer-like, thus by the assumption it has an eigenvalue 1. Using Lemma \ref{lem:eigenvalue1}, we find that $R(\varphi) = \infty$ and thus that $H$ has the $R_\infty$--property.

    Conversely, assume that every group commensurable to $G$ has the $R_\infty$--property. Let $\varphi$ be an integer-like automorphism of $M(G)$. A property of such an automorphism is that there exists a basis $\{ x_1, \ldots, x_n\}$ of $M(G)$ with respect to which $\varphi$ has a matrix representation in $\Gl_n(\Z)$. Moreover, it follows from the BCH formula that there exists a positive integer $k$ big enough, such that
    \[ H := \Sp_\Z(\{ kx_1, \ldots, kx_n \}) \]
    is a subgroup of $G^\Q$. Note as well that $H$ is preserved by $\varphi$. Clearly, $H$ is torsion-free and nilpotent since it is a subgroup of $G^\Q$. To check that $H$ is finitely generated, it suffices to check that its abelianization is so, since $H$ is nilpotent. This follows straightforwardly from the fact that the abelianizations of $M(G)$ (seen as abelian group for '+') and $G^\Q$ are isomorphic. At last, using the BCH formula, one can easily verify that for any $x \in G^\Q$, there exists a non-zero integer $m$ such that $x^m$ lies in $H$. We thus have that $G^\Q$ is the Mal'cev completion of $H$. As a consequence, $H$ is commensurable to $G$. Thus, by assumption, $H$ has the $R_\infty$--property and $R(\varphi|_H) = \infty$. Lemma \ref{lem:eigenvalue1} then implies that the induced morphism on $M(H) = M(G)$, which must be equal to $\varphi$, has an eigenvalue 1. This completes the proof.
\end{proof}

It is not clear to us whether in the statement of Proposition \ref{prop:RinftyAndCommensurability} one can replace `the associated Mal'cev Lie algebra' with `the associated graded Lie algebra'. Equivalently, we do not have an answer to the following question.

\begin{question}
    Does there exist a finite-dimensional nilpotent rational Lie algebra $L$ for which every integer-like automorphism has an eigenvalue 1, but for which $\gr(L)$ does not have this property?
\end{question}

It is not hard to check that the other direction is true, namely if $\gr(L)$ has the property then so does $L$.

\subsection{Free abelian factors and \texorpdfstring{$R_\infty$}{Roo}-property for commensurable groups}
In light of Question \ref{question2}, we argue in this section that we can forget about the (maximal) free abelian factors when stating the question if we are considering 2-step nilpotent groups, i.e. we prove that if $H$ is finitely generated torsion-free 2-step nilpotent and $H^\Q$ does not have an abelian direct factor, then the following are equivalent for any $m\in \N_0$
\begin{center}
		None of the (fg. tf. 2-step nilpotent) groups commensurable with $H$ has the $R_{\infty}$--property\\
		\vspace{2pt}$\Updownarrow$\vspace{2pt}\\
		None of the (fg. tf. 2-step nilpotent) groups commensurable with $\Z^m\times H$ has the $R_{\infty}$--property.
\end{center}

Recall that the \textit{adapted lower central series} of a group $G$ is defined as the series of isolators of the terms of the lower central series, i.e. the series of terms $\sqrt[G]{\gamma_i(G)}=\{g\in G \mid \exists\, l\in \N_0:\: g^l\in \gamma_i(G)\}$ (for $i\in \N_0$). If $G$ is nilpotent, these are in fact characteristic subgroups in $G$. We will sometimes omit the superscript $G$ if the group is clear. Note that $H^\Q$ does not have an abelian direct factor is equivalent with $Z(H)=\sqrt[H]{\gamma_2(H)}$ if we are considering nilpotency class 2.

\begin{lemma}\label{lem:GQ abelian factors}
	Let $G$ be a finitely generated torsion-free 2-step nilpotent group, then $Z(G)=\sqrt{\gamma_2(G)}$ if and only if $G^{\Q}$ does not have an abelian direct factor.
\end{lemma}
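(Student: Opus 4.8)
The statement to prove is: for a finitely generated torsion-free 2-step nilpotent group $G$, we have $Z(G) = \sqrt{\gamma_2(G)}$ if and only if $G^\Q$ has no abelian direct factor. The plan is to work in the Mal'cev Lie algebra $M(G)$, since being 2-step nilpotent, $M(G)$ is a Lie algebra over $\Q$ with $[M(G),[M(G),M(G)]] = 0$, and the relevant subgroups of $G$ have clean images in $M(G)$ under the correspondence between radicable subgroups and Lie subalgebras. Concretely, I would first record the dictionary: $\sqrt[G]{\gamma_2(G)}$ corresponds to $\gamma_2(M(G)) = [M(G),M(G)]$, the center $Z(G)$ corresponds (up to taking isolators, but the isolator of a central subgroup in a torsion-free nilpotent group is again central) to the center $Z(M(G))$, and $G^\Q$ having an abelian direct factor is equivalent to $M(G)$ decomposing as a Lie algebra direct sum $M(G) = A \oplus L'$ with $A$ abelian and $A \neq 0$. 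The key translation lemma, that $G^\Q$ has an abelian direct factor iff $M(G)$ does, follows from the fact that $G^\Q$ and $M(G)$ share the same underlying set and direct product decompositions of the radicable group correspond to Lie algebra direct sum decompositions via BCH.

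Next I would prove the purely Lie-algebraic statement: a finite-dimensional 2-step nilpotent Lie algebra $L$ over $\Q$ satisfies $Z(L) = [L,L]$ if and only if $L$ has no nonzero abelian direct factor. For the forward implication, suppose $Z(L) = [L,L]$ and $L = A \oplus L'$ with $A$ abelian. Then $A \subseteq Z(L) = [L,L]$; but $[L,L] = [A \oplus L', A \oplus L'] = [L',L'] \subseteq L'$ since $A$ is central, so $A \subseteq L' \cap A = 0$. For the converse, I argue the contrapositive: if $Z(L) \neq [L,L]$ then, since $[L,L] \subseteq Z(L)$ always holds in the 2-step case, there is a nonzero subspace $W$ with $Z(L) = [L,L] \oplus W$. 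Pick any vector space complement $U$ of $Z(L)$ in $L$, so $L = U \oplus [L,L] \oplus W$ as vector spaces. Since $W \subseteq Z(L)$ and $W \cap [L,L] = 0$, the subspace $W$ is an ideal, it is abelian, and $[L,L] = [U \oplus W, U \oplus W]$ does not involve $W$ (as $W$ is central), so $L = (U \oplus [L,L]) \oplus W$ is a Lie algebra direct sum with $W$ a nonzero abelian factor. (One should check $U \oplus [L,L]$ is a subalgebra: it is, since $[U \oplus [L,L], U \oplus [L,L]] = [U,U] \subseteq [L,L]$.)

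Finally I would assemble the pieces: $Z(G) = \sqrt{\gamma_2(G)}$ in $G$ $\iff$ $Z(M(G)) = [M(G),M(G)]$ in the Lie algebra $M(G)$ (by the dictionary above, using torsion-freeness so that passing to isolators is harmless and using that $M(G)$ is the Mal'cev Lie algebra of a 2-step nilpotent group hence itself 2-step nilpotent) $\iff$ $M(G)$ has no nonzero abelian direct summand (by the Lie-algebraic equivalence just proved) $\iff$ $G^\Q$ has no abelian direct factor (by the translation between direct decompositions of $G^\Q$ and of $M(G)$). The main obstacle I anticipate is making the first and last equivalences fully rigorous, i.e. carefully justifying that isolators of $\gamma_2(G)$ and of $Z(G)$, and abelian direct factors of $G^\Q$, correspond exactly to $[M(G),M(G)]$, $Z(M(G))$, and abelian direct summands of $M(G)$; the Lie algebra computation itself is routine linear algebra once the 2-step hypothesis $[L,L] \subseteq Z(L)$ is invoked.
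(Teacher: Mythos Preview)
Your proposal is correct and takes a genuinely different route from the paper's proof. The paper handles the two implications asymmetrically: for the direction ``$G^\Q$ has no abelian factor $\Rightarrow Z(G)=\sqrt{\gamma_2(G)}$'' it simply cites an external result (Proposition 2.18 in \cite{send22}), and for the converse it argues by a dimension count directly in $G^\Q$, supposing $G^\Q \cong \Q^l \times G'$ and chaining the equalities
\[
\dim_\Q Z(G') \leq l + \dim_\Q Z(G') = \dim_\Q Z(G^\Q) = \dim_\Q (Z(G))^\Q = \dim_\Q (\sqrt{\gamma_2(G)})^\Q = \dim_\Q \gamma_2(G') \leq \dim_\Q Z(G')
\]
to force $l=0$. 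By contrast, you translate the entire statement into the Mal'cev Lie algebra $M(G)$ and prove the clean Lie-algebraic equivalence ``$Z(L)=[L,L]$ iff $L$ has no nonzero abelian direct summand'' for 2-step nilpotent $L$, handling both directions constructively: one by the containment $A \subseteq Z(L) = [L,L] \subseteq L'$, the other by explicitly splitting off a complement $W$ of $[L,L]$ inside $Z(L)$. Your approach is more self-contained (no external citation) and yields an explicit abelian summand when one exists; the paper's dimension argument is slicker for the one direction it actually proves. The dictionary steps you flag as potential obstacles (that $Z(G)$, $\sqrt{\gamma_2(G)}$, and abelian direct factors of $G^\Q$ correspond to $Z(M(G))$, $[M(G),M(G)]$, and abelian Lie summands of $M(G)$) are standard in the 2-step case, where the group commutator equals the Lie bracket under BCH, so these translations go through without difficulty.
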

\begin{proof}
	If $G^{\Q}$ does not have an abelian factor, then Proposition 2.18 in \cite{send22} tells us that $Z(G)=\sqrt{\gamma_2(G)}$. Hence, it suffices to prove the converse.\\
	Suppose that $Z(G)=\sqrt{\gamma_2(G)}$ and $G^{\Q}\cong \Q^l\times G'$ with $l\in \N$. Since $\gamma_3(G')=\gamma_3(G^{\Q})=1$, it follows that $G'$ is finitely generated and nilpotent of class at most 2. Thus we obtain that
	\begin{align*}
		\dim_{\Q}(Z(G'))&\leq l+\dim_{\Q}(Z(G'))=\dim_{\Q}(Z(G^{\Q}))=\dim_{\Q}((Z(G))^{\Q})\\
		&=\dim_{\Q}((\sqrt{\gamma_2(G)})^{\Q})=\dim_{\Q}(\gamma_2(G^{\Q}))=\dim_{\Q}(\gamma_2(G'))\\
		&\leq \dim_{\Q}(Z(G')).
	\end{align*}
	In particular, this implies that $l=0$ and thus $G^{\Q}$ does not have an abelian direct factor if $Z(G)=\sqrt{\gamma_2(G)}$.
\end{proof}

Next, we argue that for a finitely generated torsion-free 2-step nilpotent group its finite index subgroups are the only groups commensurable with it (up to isomorphism). This follows directly by applying the next lemma.
\begin{lemma}\label{lem:finite index 2-step nilpotent}
	Let $G$ be a finitely generated torsion-free 2-step nilpotent group and $H\subset_{\text{fin}} G$. Then there exists some $G'\subset_{\text{fin}}H$ such that $G'\cong G$.
\end{lemma}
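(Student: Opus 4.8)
The plan is to realize $G'$ as the image of $G$ under a suitable \emph{dilation} of the common Mal'cev completion. Since $H$ has finite index in $G$, the two groups have the same rational Mal'cev completion; identifying $M(H)$ with $M(G)$ through the isomorphism restricting to $\Id_H$, we may regard $H\subseteq G\subseteq G^\Q$, with $M:=M(G)=M(H)$ the common $2$-step nilpotent Mal'cev Lie algebra over $\Q$, whose underlying set is $G^\Q$. Put $W:=\gamma_2(M)=[M,M]$; since $M$ is $2$-step, $W$ is central in $M$. Fix a vector space complement $M=V\oplus W$, and for a non-zero $\lambda\in\Q$ let $\delta_\lambda\colon M\to M$ be the linear map equal to $\lambda\,\Id$ on $V$ and to $\lambda^2\,\Id$ on $W$. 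First I would check that $\delta_\lambda$ is a Lie algebra automorphism of $M$: because $W$ is central and $[v+w,\,v'+w']=[v,v']$ for $v,v'\in V$ and $w,w'\in W$, the map $\delta_\lambda$ scales every bracket by $\lambda^2$, so it is a homomorphism, and it is bijective since $\lambda\neq 0$. Hence $\delta_\lambda$ is also a group automorphism of $G^\Q$.

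Next I would show that $\delta_n(G)\subseteq H$ for a sufficiently divisible positive integer $n$. As $\delta_n$ is a group automorphism, it suffices to check $\delta_n(g_i)\in H$ on a finite generating set $g_1,\dots,g_m$ of $G$. Writing $\log g_i=v_i+w_i$ with $v_i\in V$, $w_i\in W$, and using that $W$ is central, one obtains the clean identity $\delta_n(g_i)=g_i^{\,n}\,\exp\!\big((n^2-n)w_i\big)$. I would then pick $n$ divisible enough so that, for every $i$ simultaneously, $g_i^{\,n}\in H$ — for instance $n$ a multiple of the exponent of the finite group $G/\mathrm{core}_G(H)$, where $\mathrm{core}_G(H)$ is the normal core — and $\exp\!\big((n^2-n)w_i\big)\in H$; the latter is possible because $\exp(w_i)$ lies in the abelian group $\gamma_2(G^\Q)$, some positive multiple of $w_i$ exponentiates into the lattice $\gamma_2(G)$, a further multiple into the finite-index subgroup $\gamma_2(H)\subseteq H$, and $n^2-n$ can be arranged divisible by the resulting integer. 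For such $n$ we get $\delta_n(G)\subseteq H$.

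To conclude, set $G':=\delta_n(G)$. Since $\delta_n$ restricts to a group isomorphism of $G^\Q$, it carries $G$ isomorphically onto $G'$, so $G'\cong G$. Moreover $G'$ is a finitely generated torsion-free nilpotent subgroup of $H$ whose $\Q$-span in $G^\Q$ is all of $M$, hence of the same Hirsch length as $H$; a finitely generated subgroup of full Hirsch length in a finitely generated torsion-free nilpotent group has finite index, so $G'\subset_{\text{fin}}H$, as required.

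I expect the only genuine obstacle to be the divisibility bookkeeping in the second step — pinning down which powers are forced into $H$ — whereas the conceptual point, that the anisotropic dilations $\delta_\lambda$ are honest Lie algebra automorphisms, is precisely what makes the argument work in the $2$-step case and fails for nilpotency class $\geq 3$. If one prefers to avoid the Mal'cev completion, essentially the same idea runs directly on the central extension $1\to\sqrt[G]{\gamma_2(G)}\to G\to\Z^r\to 1$: rescale the free abelian quotient by $n$ and the central kernel by $n^2$, chosen so that both rescaled lattices lie inside $H$; the resulting subgroup of $H$ then has literally the same commutator pairing as $G$, and a central extension of a free abelian group by an abelian group is determined up to isomorphism by its commutator pairing.
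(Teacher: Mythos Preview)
Your argument is correct, and its core coincides with the paper's: both produce $G'$ by rescaling a transversal to the (isolated) commutator by $k$ and the commutator itself by $k^{2}$, exploiting that in class $2$ this is a genuine automorphism. The paper, however, carries this out elementarily without the Mal'cev completion: it fixes an adapted generating set $\{x_1,\dots,x_n,y_1,\dots,y_m\}$ with the $y_l$ a $\Z$-basis of $\sqrt{\gamma_2(G)}$ and the $x_i$ lifting a $\Z$-basis of $G/\sqrt{\gamma_2(G)}$, takes $k$ to be the lcm of exponents forcing each $x_i^{a_i},\,y_l^{b_l}$ into $H$, and sets $G'=\langle x_1^{k},\dots,x_n^{k},\,y_1^{k^2},\dots,y_m^{k^2}\rangle$. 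Because $[x_i^{k},x_j^{k}]=\prod_l (y_l^{k^2})^{c_{ij,l}}$, these generators satisfy the same relations as $G$, whence $G'\cong G$; finite index comes from a reference rather than the Hirsch-length argument. Your Lie-theoretic packaging via the dilation $\delta_\lambda$ makes the isomorphism $G'\cong G$ automatic and explains conceptually why class $2$ is essential (your observation that $\delta_\lambda$ fails to be a Lie morphism in class $\ge 3$), at the cost of the extra bookkeeping with $w_i$ and the divisibility of $n^2-n$; the paper's version avoids this entirely since its $y_l$ already lie in $G$, so no ``correction term'' $\exp((n^2-n)w_i)$ appears. Your closing paragraph, sketching the direct argument on the central extension, is essentially the paper's proof.
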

\begin{proof}
	Fix an adapted set of generators of $G$ $\{x_1,\dots,x_n,y_1,\dots,y_m\}$, i.e. such that $\{x_1\sqrt{\gamma_2(G)},\dots,x_n\sqrt{\gamma_2(G)}\}$ (respectively $\{y_1,\dots,y_m\}$) is a $\Z$-basis for $G/\sqrt{\gamma_2(G)}\cong \Z^n$ (respectively $\sqrt{\gamma_2(G)}\cong \Z^m$). Hence, there are $c_{ij,l}\in \Z$ (for $l=1,\dots,m$) such that
	\[ G=\Bigg\langle x_1,\dots,x_n,y_1,\dots,y_m\:\Biggg\vert \:   
	\begin{array}{l}
		[x_i,x_j]=\prod_{l=1}^my_l^{c_{ij,l}};\\
		\relax [x_i,y_{l_1}]=[y_{l_1},y_{l_2}]=1 \forall i,l_1,l_2
	\end{array}
	\Bigg \rangle.\]
	Since $[G:H] <\infty$ there are $a_i,b_l\in \N_0$ (for all $i,l$) such that $x_i^{a_i},y_l^{b_l}\in H$. Define $k:=\text{lcm}_{i,l}\{a_i,b_l\}$ and consider $G':=\langle x_1^k,\dots,x_n^k,y_1^{k^2},\dots,y_m^{k^2}\rangle\subset G$. Since $G$ is 2-step nilpotent, it follows that $G'\cong G$. Note that $x_i^k,y_l^{k^2}\in H$ (for all $i,l$) and thus by Lemma 2.8 in \cite{baum71-1} we obtain that $G'\subset_{\text{fin}} G$. Hence, we can conclude that $G'\subset_{\text{fin}}H$ and $G'\cong G$.
\end{proof}

\begin{corollary}\label{cor:commensurablity 2-step nilpotent}
	If $G$ and $H$ are two finitely generated torsion-free 2-step nilpotent groups that are commensurable, then there is some finite index subgroup $N\subseteq_{\text{fin}} G$ such that $H\cong N$.
\end{corollary}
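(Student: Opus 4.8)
The plan is to derive Corollary \ref{cor:commensurablity 2-step nilpotent} directly from Lemma \ref{lem:finite index 2-step nilpotent}, applied twice. Since $G$ and $H$ are commensurable, there exist finite index subgroups $G' \subseteq_{\text{fin}} G$ and $H' \subseteq_{\text{fin}} H$ with $G' \cong H'$. Note that $G'$ is itself a finitely generated torsion-free 2-step nilpotent group (these properties pass to subgroups), and $H'$ is a finite index subgroup of the finitely generated torsion-free 2-step nilpotent group $H$.

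The key step is to apply Lemma \ref{lem:finite index 2-step nilpotent} to the inclusion $H' \subseteq_{\text{fin}} H$: this yields a subgroup $N \subseteq_{\text{fin}} H'$ with $N \cong H$. Wait — I need it the other way. Let me instead apply the lemma to $G' \subseteq_{\text{fin}} G$, obtaining $G'' \subseteq_{\text{fin}} G'$ with $G'' \cong G$. That is also not quite what is wanted. The right move: I want a finite index subgroup of $G$ isomorphic to $H$. Since $G' \cong H'$ and $H' \subseteq_{\text{fin}} H$, transport the inclusion across the isomorphism to get $G' \subseteq_{\text{fin}} \widetilde{H}$ where $\widetilde{H} \cong H$ contains $G'$ as a finite index subgroup. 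Hmm, this produces an overgroup, not a subgroup. Better: apply Lemma \ref{lem:finite index 2-step nilpotent} to $H' \subseteq_{\text{fin}} H$ to produce $H'' \subseteq_{\text{fin}} H'$ with $H'' \cong H$. Then $H'' \subseteq_{\text{fin}} H' \cong G'$, so under this isomorphism $H''$ corresponds to some $N_0 \subseteq_{\text{fin}} G'$ with $N_0 \cong H$. Finally $N_0 \subseteq_{\text{fin}} G' \subseteq_{\text{fin}} G$, and since a finite index subgroup of a finite index subgroup is finite index, $N := N_0 \subseteq_{\text{fin}} G$ with $N \cong H$, as required.

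So the proof is essentially: (1) unwind the definition of commensurable to extract $G' \cong H'$ with both of finite index; (2) use Lemma \ref{lem:finite index 2-step nilpotent} on $H' \subseteq_{\text{fin}} H$ to replace $H'$ by a finite index subgroup $H''$ that is isomorphic to all of $H$; (3) transport $H''$ through the isomorphism $H' \cong G'$ into $G$, and compose the finite-index inclusions. No new computation is needed — every substantive fact has already been established, and transitivity of the "finite index" relation is standard.

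The only point requiring a moment of care — and the spot where one could slip, as the scratchwork above shows — is getting the \emph{direction} right: we need a subgroup of $G$ isomorphic to $H$, so we must "enlarge" $H'$ up to $H$ using Lemma \ref{lem:finite index 2-step nilpotent} \emph{before} transporting across the isomorphism, rather than transporting first and ending up with an overgroup of $G'$. Once the order of operations is fixed, everything is immediate.
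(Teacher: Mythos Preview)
Your argument is correct and is exactly the approach the paper intends: the corollary is stated immediately after Lemma \ref{lem:finite index 2-step nilpotent} with the remark that it ``follows directly by applying the next lemma,'' and your final three-step argument (unwind commensurability, apply the lemma to $H' \subseteq_{\text{fin}} H$, transport across the isomorphism and compose finite-index inclusions) is precisely that direct application. For a clean write-up you should of course remove the exploratory false starts and keep only the final paragraph.
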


Next we argue that for a finitely generated torsion-free 2-step nilpotent group, we can always factor out the (maximal) free abelian factor.

\begin{lemma}\label{lem:direct product 2-step nilpotent}
	Let $H$ be a finitely generated torsion-free 2-step nilpotent group, then $H\cong\Z^m\times H'$ (with $m\in \N$) such that
	\begin{enumerate}[label = (\roman*)]
		\item $H'$ is a finitely generated torsion-free 2-step nilpotent group
		\item $Z(H')=\sqrt[H']{\gamma_2(H')}$
	\end{enumerate}
\end{lemma}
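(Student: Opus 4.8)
The plan is to isolate the obstruction to having no abelian direct factor, which by Lemma \ref{lem:GQ abelian factors} is exactly the failure of $Z(H) = \sqrt{\gamma_2(H)}$, and to split off a free abelian complement along $\sqrt{\gamma_2(H)}$ inside the center. First I would set $C := Z(H)$ and $D := \sqrt{\gamma_2(H)}$; both are characteristic (hence normal) subgroups of the nilpotent group $H$, both are finitely generated torsion-free abelian, and $D \subseteq C$ since $H$ is $2$-step nilpotent. Because $D$ is isolated in $H$ (it is an isolator), the quotient $C/D$ is torsion-free, so it is free abelian of some rank $m$; pick $z_1, \dots, z_m \in C$ whose images form a $\Z$-basis of $C/D$ and set $A := \langle z_1, \dots, z_m\rangle \cong \Z^m$. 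The key structural claim is then that $H \cong A \times H'$ for a suitable $H'$, with $H'$ satisfying (i) and (ii).

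For the splitting I would argue as follows. Since $D$ is isolated, $H/D$ is finitely generated torsion-free nilpotent (in fact abelian, as $D \supseteq \gamma_2(H)$), so $H/D$ is free abelian; the image of $A$ in $H/D$ is a direct summand (it is $C/D$, which is a direct summand of the free abelian group $H/D$ — choose a complementary basis). Lift a complementary basis to elements $x_1, \dots, x_n \in H$ and put $H' := \langle x_1, \dots, x_n, D\rangle$, equivalently $H' := \langle x_1, \dots, x_n, \gamma_2(H)\rangle$ (these agree since $D = \sqrt{\gamma_2(H)}$ and $H'$ already contains enough roots — one checks $\gamma_2(H) \subseteq H'$ because the $x_i$ together with the $z_j$ generate $H$, and the $z_j$ are central so contribute nothing to commutators, whence $\gamma_2(H) = \gamma_2(H')$, and isolators are computed the same way). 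Then $H = A \cdot H'$ with $A$ central, $A \cap H' \subseteq A \cap C = A$ but also $A \cap H'$ maps to $0$ in $H/D$ by construction of the complementary bases, so $A \cap H' \subseteq A \cap D$; since $A$ was chosen as a set of representatives for a basis of $C/D$, we get $A \cap D = \{1\}$, hence $A \cap H' = \{1\}$ and $H = A \times H'$.

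It remains to verify (i) and (ii) for $H'$. Part (i) is immediate: $H'$ is a subgroup of the finitely generated torsion-free nilpotent group $H$, it is finitely generated by construction, and $\gamma_3(H') \subseteq \gamma_3(H) = 1$, so it is $2$-step nilpotent. For (ii), note $\gamma_2(H') = \gamma_2(H) = D' := \gamma_2(H)$ (the $z_j$ being central), and $\sqrt[H']{\gamma_2(H')} = \sqrt[H']{\gamma_2(H)} = D \cap H'$. On the other hand $Z(H')$: since $Z(H) = C = A \times (C \cap H')$ and $H = A \times H'$, we get $Z(H') = C \cap H'$. So (ii) amounts to $C \cap H' = D \cap H'$, i.e. $Z(H') = \sqrt[H']{\gamma_2(H')}$. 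The inclusion $\supseteq$ is clear; for $\subseteq$, take $h \in C \cap H'$; its image in $C/D$ lies in the image of $H' \cap C$, which by the complementary-basis construction meets the span of the $z_j$-images trivially, forcing $h \in D$, hence $h \in D \cap H'$. This gives (ii).

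The main obstacle I expect is the bookkeeping in the splitting step: one must be careful that the complement $A$ of $C/D$ chosen inside the free abelian $H/D$ can simultaneously be realized by genuinely central elements $z_j \in Z(H)$ (this is where $A \subseteq C$ is used) while the complementary lifts $x_i$ need not be central — and then that $A \cap H' = \{1\}$ really does hold rather than just $A \cap H'$ being finite. The torsion-freeness of $H/D$ (guaranteed because $D$ is an isolator) is what makes all the "direct summand" statements clean, so that is the point to invoke carefully. Everything else — finite generation of $H'$, nilpotency class, and the identification $Z(H') = \sqrt[H']{\gamma_2(H')}$ via $Z(H) = C$ and $\gamma_2(H') = \gamma_2(H)$ — is routine once the product decomposition is in hand.
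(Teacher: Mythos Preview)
Your approach is correct and matches the paper's almost exactly: both pick lifts $z_j \in Z(H)$ of a $\Z$-basis of $Z(H)/\sqrt{\gamma_2(H)}$ and lifts $x_i$ of a complementary basis in $H/\sqrt{\gamma_2(H)}$ (equivalently, of a basis of $H/Z(H)$, since $H/Z(H)$ is torsion-free for torsion-free $2$-step nilpotent $H$), set $H' = \langle x_1,\dots,x_n\rangle\,\sqrt{\gamma_2(H)}$, and then verify $H \cong \Z^m \times H'$ and $Z(H') = \sqrt[H']{\gamma_2(H')}$ by the same intersection arguments you outline.

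One small slip worth flagging, though it does not affect your argument: the aside that $\langle x_1,\dots,x_n, D\rangle$ agrees with $\langle x_1,\dots,x_n, \gamma_2(H)\rangle$ is false in general. For instance, take $H = \Z \times H_2$ with $H_2 = \langle v_1,v_2,e \mid [v_1,v_2]=e^2,\, e \text{ central}\rangle$ as in the paper's Heisenberg example; then $D = \langle e\rangle$, $\gamma_2(H) = \langle e^2\rangle$, and $e \notin \langle v_1,v_2,e^2\rangle = \langle v_1,v_2\rangle$. Since you actually work with the definition $H' = \langle x_1,\dots,x_n, D\rangle$ throughout, nothing breaks---just drop the parenthetical.
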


\begin{proof}
    Fix a $\Z$-basis $\{x_1Z(H),\dots,x_nZ(H)\}$ (respectively $\{y_1\sqrt{\gamma_2(H)},\dots, y_m\sqrt{\gamma_2(H)}\}$) of $H/Z(H)\cong \Z^n$ (respectively $Z(H)/\sqrt{\gamma_2(H)}\cong \Z^m$). Define $H':=\langle x_1,\dots,x_n\rangle\sqrt{\gamma_2(H)}$. One can easily check that any element of $H'$ can be uniquely written as $\prod_{i=1}^n x_i^{t_i}\gamma$ (with $t_i\in \Z$ and $\gamma\in \sqrt{\gamma_2(H)}$) and thus $H\cong\Z^m\times H'$.\\
	Now we prove that $\sqrt[H']{\gamma_2(H')}=\sqrt[H]{\gamma_2(H)}=Z(H')$. Note that $\gamma_2(H)=\gamma_2(H')$ and thus by using that $\sqrt[H]{\gamma_2(H)}\subset H'\subset H$ it readily follows that $\sqrt[H']{\gamma_2(H')}=\sqrt[H]{\gamma_2(H)}$. Since $\sqrt[H]{\gamma_2(H)}\subset Z(H)\cap H'\subset Z(H')$, it suffices to prove the other two inclusions. Since $H/Z(H)=\langle x_1Z(H),\dots,x_nZ(H)\rangle$ and since $\langle x_1,\dots,x_n\rangle\subset H'$, it follows that $Z(H')=Z(H)\cap H'$. Now fix any $h'=\prod_{i=1}^n x_i^{t_i}\gamma\in Z(H)\cap H'$ (where $\gamma\in \sqrt[H]{\gamma_2(H)}\subset Z(H)$ and $t_i\in \Z$). Since $\gamma\in \sqrt[H]{\gamma_2(H)}\subset Z(H)$, we get that $\prod_{i=1}^n x_i^{t_i'}\in Z(H)$ and thus $t_i=0$ for all $i$. Hence, $h'\in \sqrt[H]{\gamma_2(H)}$ and thus we conclude that $\sqrt[H']{\gamma_2(H')}=\sqrt[H]{\gamma_2(H)}=Z(H')=Z(H)\cap H'$.
\end{proof}


Any finitely generated group $G$ is polycyclic, i.e. it has a series $1=G_1\lhd G_2\lhd \dots\lhd G_n=G$ with all $G_{i+1}/G_i$ cyclic. The number of infinite cyclic factors in such a series is called the \textit{Hirsch number} $h(G)$ of $G$. The next Lemma gives some important properties.
\begin{lemma}\label{lem:Hirsch number}
    Let $G$ be a finitely generated nilpotent group. Then the Hirsch number $h(G)$ of $G$ is well-defined, i.e. it does not depend on the chosen series. Moreover, if $H\subset G$ and $N\lhd G$, then it holds that
    \begin{enumerate}[label = (\roman*)]
        \item $h(G)\geq h(H)$
        \item $h(G)=h(H) \: \Leftrightarrow\: [G:H]<\infty$
        \item $h(G)=h(N)+h(G/N)$
        \item $h(G)=0\: \Leftrightarrow\: |G|<\infty$
        \item if $G$ and $H$ are commensurable, then $h(G)=h(H)$.
    \end{enumerate}
\end{lemma}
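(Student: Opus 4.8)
The plan is to derive all the statements from a single structural fact: every finitely generated nilpotent group $G$ admits a \emph{polycyclic series} $1 = G_0 \lhd G_1 \lhd \dots \lhd G_n = G$ with each factor $G_{i+1}/G_i$ cyclic (obtained, for instance, by refining an adapted lower central series, whose factors are finitely generated abelian). Well-definedness of $h(G)$ amounts to showing that any two polycyclic series of $G$ have the same number of infinite cyclic factors; I would invoke the Schreier refinement theorem, noting that (a) a common refinement of two polycyclic series again has cyclic or trivial factors, (b) inserting terms into a cyclic factor leaves the number of infinite cyclic pieces it contributes unchanged (a nontrivial subgroup or infinite quotient of $\Z$ is again $\cong \Z$, while a finite cyclic group only refines into finite pieces), and (c) the Zassenhaus lemma identifies the factors of equivalent refinements up to isomorphism. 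Hence the count is an invariant.

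For the easier items I would repeatedly build adapted series. For (i), intersect a polycyclic series of $G$ with $H$: the subnormal series $H \cap G_i$ of $H$ has $(H\cap G_{i+1})/(H\cap G_i) \hookrightarrow G_{i+1}/G_i$, so each factor is cyclic and is infinite only if the corresponding $G$-factor is; thus $h(H) \le h(G)$. For (iii), glue a polycyclic series of $N$ to the $G$-preimage of a polycyclic series of $G/N$; the resulting polycyclic series of $G$ visibly has $h(N) + h(G/N)$ infinite cyclic factors. Item (iv) is immediate, since a group with a polycyclic series all of whose factors are finite has order $\prod_i |G_{i+1}/G_i| < \infty$, and conversely a finite group has $h = 0$. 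The forward direction of (ii) also falls out of the series $H\cap G_i$: if $H$ has finite index in $G$, then each $(H\cap G_{i+1})/(H\cap G_i)$ sits with finite index in $G_{i+1}/G_i$, hence is infinite cyclic precisely when $G_{i+1}/G_i$ is, so $h(H)=h(G)$. Finally (v) is the combination of (ii) with the obvious invariance of $h$ under isomorphism: if $G'$ has finite index in $G$, $H'$ has finite index in $H$, and $G'\cong H'$, then $h(G)=h(G')=h(H')=h(H)$.

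The one genuinely non-formal point — and the main obstacle — is the reverse implication of (ii): that $h(H)=h(G)$ forces $H$ to have finite index in $G$. I would prove the contrapositive by induction on $h(G)$, in two stages. \emph{Reduction to the torsion-free case.} The torsion subgroup $T$ of $G$ is finite and characteristic, $h(G/T)=h(G)$, and if $[G:HT]=[G/T:HT/T]$ were finite then $[G:H]=[G:HT]\,[HT:H]\le [G:HT]\,|T|<\infty$; so we may assume $[G/T:HT/T]=\infty$, in which case $h(H)\le h(HT)=h(HT/T)<h(G/T)=h(G)$ as soon as the torsion-free statement is available. \emph{Torsion-free case.} Assume $G$ torsion-free with $h(G)=n\ge 1$. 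Since $Z(G)\neq 1$ is a nontrivial finitely generated torsion-free abelian group it contains an element $z$ of infinite order; let $C=\sqrt[G]{\langle z\rangle}$, which is infinite cyclic, is central (uniqueness of roots in a torsion-free nilpotent group), and has torsion-free quotient $G/C$ with $h(G/C)=n-1$. If $[G/C:HC/C]=\infty$, the inductive hypothesis on $G/C$ gives $h(H)\le h(HC)=h(HC/C)+1<n$; otherwise $[HC:H]=[C:C\cap H]=\infty$, which in $C\cong\Z$ forces $C\cap H=1$, so $H\cong HC/C\le G/C$ and $h(H)\le h(G/C)=n-1<n$. This completes the induction, and with it the lemma.
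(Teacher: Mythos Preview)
The paper does not actually prove this lemma; it is stated without proof as a list of standard facts about the Hirsch length of finitely generated nilpotent (polycyclic) groups and then used freely in what follows. Your proposal therefore supplies an argument where the paper gives none, and the argument is correct. The Schreier--Zassenhaus refinement argument for well-definedness, the intersected and spliced series for (i), (iii), (iv) and the forward direction of (ii), and the deduction of (v) from (ii) are all standard and carried out cleanly.

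The one point that deserves a brief comment is your handling of the converse of (ii). The reduction to the torsion-free case is fine. In the torsion-free inductive step, your claim that $C=\sqrt[G]{\langle z\rangle}$ is infinite cyclic relies on two ingredients you only partially spell out: first, that $C\subset Z(G)$ (which you do justify via uniqueness of roots: $(hgh^{-1})^k=g^k$ forces $hgh^{-1}=g$), and second, that inside $Z(G)\cong\Z^r$ the isolator of a rank-one subgroup is again rank one. Both are true and routine, so the induction goes through; it would just be worth making the second step explicit. With that, your argument for (ii) is complete and the whole lemma follows.
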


\begin{remark}
    Remark that for a finitely generated torsion-free 2-step nilpotent group $H$, it holds that
    \[h(Z(H))=h(\sqrt[H]{\gamma_2(H)})\quad \Longleftrightarrow\quad Z(H)=\sqrt[H]{\gamma_2(H)}.\]
    Indeed, since $H$ is torsion-free 2-step nilpotent, it holds that $\sqrt[H]{\gamma_2(H)}\subset Z(H)$. If moreover $h(Z(H))=h(\sqrt[H]{\gamma_2(H)})$, then by Lemma \ref{lem:Hirsch number} it follows that $\sqrt[H]{\gamma_2(H)}\subset_{\text{fin}} Z(H)$. By Lemma 2.8 in \cite{baum71-1} it holds that $\gamma_2(H)\subset_{\text{fin}} \sqrt[H]{\gamma_2(H)}$. Thus we obtain that $\gamma_2(H)\subset_{\text{fin}}Z(H)$ and thus for any $g\in Z(H)$ there is some $k\in \N_0$ such that $g^k\in \gamma_2(H)$. In particular, it holds that $Z(H)\subset \sqrt[H]{\gamma_2(H)}$ and thus equality follows.
\end{remark}

Now we have gathered all the tools to prove the result mentioned earlier.
\begin{theorem}\label{thm:Z^k times H}
	Let $H$ be a finitely generated torsion-free 2-step nilpotent group with $h(Z(H))=h(\sqrt[H]{\gamma_2(H)})$ (or equivalently $Z(H)=\sqrt[H]{\gamma_2(H)}$) and $m\in \N_0$. Then
    \begin{center}
		None of the (fg. tv. 2-step nilpotent) groups commensurable with $H$ has the $R_{\infty}$--property\\
		\vspace{2pt}$\Updownarrow$\vspace{2pt}\\
		None of the (fg. tv. 2-step nilpotent) groups commensurable with $\Z^m\times H$ has the $R_{\infty}$--property.
	\end{center}
\end{theorem}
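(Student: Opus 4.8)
The plan is to prove the two implications of the equivalence separately, each time reducing to a statement about the Mal'cev Lie algebra. Write $L := M(H)$. Since $H$ has nilpotency class $2$ and, by hypothesis together with Lemma~\ref{lem:GQ abelian factors}, $H^{\Q}$ has no abelian direct factor, $L$ is a $2$-step nilpotent Lie algebra with $Z(L) = [L,L]$; the same then holds for $M(K)$ whenever $K$ is commensurable with $H$. By Lemma~\ref{lem:eigenvalue1}, a finitely generated torsion-free nilpotent group $G$ fails the $R_\infty$--property precisely when some $\beta \in \Aut(G)$ induces on $M(G)$ an automorphism with no eigenvalue $1$; and two such groups are commensurable iff their Mal'cev Lie algebras are isomorphic. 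We freely identify a group commensurable with $H$ (resp.\ with $\Z^m\times H$) with its image lattice in $L$ (resp.\ in $\Q^m\oplus L$), the group law being the restricted Baker--Campbell--Hausdorff product $\ast$.

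For ``none commensurable with $H$ $\Rightarrow$ none commensurable with $\Z^m\times H$'': given $K'$ commensurable with $\Z^m\times H$, I would use Lemma~\ref{lem:direct product 2-step nilpotent} to write $K'\cong\Z^{m'}\times K_0$ with $Z(K_0)=\sqrt{\gamma_2(K_0)}$, so that $M(K')\cong\Q^{m'}\oplus M(K_0)$ and $M(\Z^m\times H)\cong\Q^{m}\oplus L$ are isomorphic while $M(K_0)$ and $L$ have no abelian direct factor. The Krull--Schmidt theorem for finite-dimensional Lie algebras then forces $m'=m$ and $M(K_0)\cong L$, so $K_0$ is commensurable with $H$, hence (by hypothesis) fails $R_\infty$: pick $\beta\in\Aut(K_0)$ with no eigenvalue $1$ on $M(K_0)$. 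Then $(-\Id)\times\beta\in\Aut(\Z^m\times K_0)\cong\Aut(K')$ induces $(-\Id_{\Q^m})\oplus M(\beta)$ on $M(K')$, whose eigenvalues are $-1$ together with those of $M(\beta)$, so $1$ is avoided; thus $K'$ fails $R_\infty$.

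For the converse, let $K$ be commensurable with $H$; then $\Z^m\times K$ is commensurable with $\Z^m\times H$ and so, by assumption, fails $R_\infty$. Fix $\psi\in\Aut(\Z^m\times K)$ inducing on $M:=M(\Z^m\times K)$, which we identify with $W\oplus L$ for $W=\Q^m$ and $L\cong M(K)$, an automorphism --- still called $\psi$ --- with no eigenvalue $1$; let $\Lambda\subseteq L$ be the lattice of $K$, so that $\Z^m\times K$ is the lattice $\Z^m\oplus\Lambda$. Writing $\psi=\left(\begin{smallmatrix}A&C\\ B&\phi\end{smallmatrix}\right)$ in block form with respect to $M=W\oplus L$, the hypothesis $Z(L)=[L,L]$ gives: (i) $B(W)\subseteq[L,L]$, since $\psi$ preserves $Z(M)=W\oplus[L,L]$; (ii) $C$ vanishes on $[L,L]$ and $\phi$ is a Lie algebra endomorphism of $L$, obtained by evaluating $\psi$ on brackets $[0\oplus x,\,0\oplus y]$ and using that $W$ is central; (iii) $A\in\Gl_m(\Z)$, because the $W$-block of $\psi\psi^{-1}=\Id$ is $AA'+CB'$ with $CB'=0$ (the relevant block $B'$ of $\psi^{-1}$ also lands in $[L,L]$, which $C$ kills), forcing $AA'=\Id_W$. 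The key object is the subspace $V:=\{(-A^{-1}Cu,\,u):u\in L\}\subseteq M$: by (ii) and centrality of $W$ it is a Lie subalgebra, and $u\mapsto(-A^{-1}Cu,u)$ is a Lie algebra isomorphism $L\xrightarrow{\ \sim\ }V$; a short computation gives $\psi(V)\subseteq\{0\}\oplus L$, hence $\psi(V)=\{0\}\oplus L$ by dimension count, so $\psi$ restricts to a Lie algebra isomorphism $V\xrightarrow{\ \sim\ }\{0\}\oplus L$. The composite $L\xrightarrow{\ \sim\ }V\xrightarrow{\ \psi\ }\{0\}\oplus L\xrightarrow{\ \sim\ }L$ is therefore a Lie algebra automorphism of $L$, equal explicitly to the Schur complement $\tilde\phi:=\phi-BA^{-1}C$ of the abelian block. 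Using (iii) and $C(\Lambda)\subseteq\Z^m$, one checks $V\cap(\Z^m\oplus\Lambda)=\{(-A^{-1}Cu,u):u\in\Lambda\}$, which $\psi$ carries onto $(\{0\}\oplus L)\cap(\Z^m\oplus\Lambda)=\{0\}\oplus\Lambda$; hence $\tilde\phi(\Lambda)=\Lambda$, so $\tilde\phi$ restricts to an automorphism of $(\Lambda,\ast)\cong K$. Finally, along the $\psi$-invariant flag $0\subseteq[L,L]\subseteq Z(M)\subseteq M$ the characteristic polynomial of $\psi$ factors as $\det(t\,\Id-A)\cdot\det(t\,\Id-\phi)=\det(t\,\Id-A)\cdot\det(t\,\Id-\tilde\phi)$ (the last equality because $\phi$ and $\tilde\phi$ differ by the map $BA^{-1}C\colon L\to[L,L]$, which kills $[L,L]$), so $\tilde\phi$ has no eigenvalue $1$. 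Thus $\tilde\phi$ is induced by an automorphism of $K$ without eigenvalue $1$, i.e.\ $K$ fails $R_\infty$; as $K$ was arbitrary, the converse follows.

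The main obstacle is this converse: an automorphism of $\Z^m\times K$ need not respect the direct-product splitting, so it cannot simply be ``restricted'' to $K$, and the content of the argument is that the Schur complement $\tilde\phi=\phi-BA^{-1}C$ of the abelian block is the correct replacement --- a Lie algebra automorphism that, crucially, preserves the lattice $\Lambda$. This is exactly where the hypothesis $Z(L)=[L,L]$ (equivalently $Z(H)=\sqrt{\gamma_2(H)}$) is used: it makes $A$ invertible over $\Z$ and forces $B$ into the center, which is what makes the complementary subalgebra $V$ and the lattice identification go through. The remaining ingredients --- the block-matrix bookkeeping, the equivalence ``no abelian direct factor $\iff Z=[\cdot,\cdot]$'' for $2$-step nilpotent Lie algebras, Krull--Schmidt for finite-dimensional Lie algebras, and the factorization of characteristic polynomials along invariant flags --- are routine.
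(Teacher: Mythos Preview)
Your argument is correct. For the forward direction (``none commensurable with $H$ $\Rightarrow$ none commensurable with $\Z^m\times H$'') your approach coincides with the paper's: decompose the given group as $\Z^{m'}\times K_0$ via Lemma~\ref{lem:direct product 2-step nilpotent}, show $m'=m$ and $M(K_0)\cong M(H)$ by a cancellation argument, and then build a product automorphism. The only cosmetic difference is that the paper obtains $m'=m$ via a Hirsch-number count and then invokes a cancellation result (\cite[Proposition~2.5]{dere19}), whereas you invoke Krull--Schmidt for nilpotent Lie algebras directly; these are essentially the same ingredient.

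The genuine divergence is in the converse. The paper argues by contrapositive and cites \cite[Theorem~3.8]{send22}, which asserts that $\Spec(\Z^m\times N')=\Spec(\Z^m)\cdot\Spec(N')$ whenever $(N')^{\Q}$ has no abelian factor; this immediately transports $R_\infty$ from $N'$ to $\Z^m\times N'$. You instead work directly: starting from an automorphism $\psi$ of $\Z^m\times K$ without eigenvalue~$1$, you pass to the Mal'cev Lie algebra $W\oplus L$, use $Z(L)=[L,L]$ to force the off-diagonal block $B$ into $[L,L]$ and to make $A\in\Gl_m(\Z)$, and then show that the Schur complement $\tilde\phi=\phi-BA^{-1}C$ is a Lie-algebra automorphism of $L$ that preserves the lattice of $K$ and has no eigenvalue~$1$. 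This is more hands-on but entirely self-contained: it avoids the external spectrum result and makes transparent exactly where the hypothesis $Z(H)=\sqrt{\gamma_2(H)}$ enters (namely, to invert $A$ over $\Z$ and to ensure $C$ kills $[L,L]$). Conversely, the paper's route is shorter and yields the stronger multiplicative statement about full Reidemeister spectra, not just the $R_\infty$ dichotomy.
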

\begin{proof}
	We start by proving the implication from top to bottom. Fix any finitely generated torsion-free 2-step nilpotent group $N$ which is commensurable with $G:=\Z^m\times H$. By Corollary \ref{cor:commensurablity 2-step nilpotent} we can assume that $N\subset_{\text{fin}}G$. Using Lemma \ref{lem:direct product 2-step nilpotent}, we can write $N\cong \Z^n\times N'$ (with $n\in \N$ such that $Z(N)\cong \Z^n\times \sqrt[N]{\gamma_2(N)}$). We argue that $n=m$. One can check that $\sqrt[G]{\gamma_2(G)}=\sqrt[H]{\gamma_2(H)}$. Since two finitely generated torsion-free nilpotent groups are commensurable if and only if their Mal'cev completions are isomorphic, we know that $\sqrt[N]{\gamma_2(N)}$ and $\sqrt[G]{\gamma_2(G)}$ as well as $Z(N)$ and $Z(G)$ are commensurable. Thus, using Lemma \ref{lem:Hirsch number}, we obtain that
        \begin{align*}
            n&=h\Bigg(\frac{Z(N)}{\sqrt[N]{\gamma_2(N)}}\Bigg)=h(Z(N))-h(\sqrt[N]{\gamma_2(N)})\\&=h(Z(G))-h(\sqrt[G]{\gamma_2(G)})=m+h(Z(H))-h(\sqrt[H]{\gamma_2(H)})\\&=m.
        \end{align*}
	It follows that $N\cong \Z^m\times N'$ and thus (by \cite[Proposition 5]{bmo16}) we have that
	\[ \Q^m\times (N')^{\Q} \cong N^{\Q}\cong G^{\Q}\cong \Q^m\times H^{\Q}.\]
	Hence, the associated Lie algebras $\Q^m\times M(N')$ and $\Q^m\times M(H)$ are isomorphic. By Proposition 2.5 in \cite{dere19} it follows that $M(N')\cong M(H)$, thus $(N')^{\Q}\cong H^{\Q}$, concluding that $N'$ and $H$ are commensurable. In particular, $N'$ doesn't have the $R_{\infty}$--property. Hence, we can fix automorphisms $\varphi_{N'}\in \Aut(N')$ and $\varphi_{\Z^m}\in \Aut(\Z^m)$ with $R(\varphi_{N'})<\infty$ and $R(\varphi_{\Z^m})<\infty$. By for example \cite[Proposition 2.4]{sen21-2} we can now conclude that $R(\varphi_{\Z^m}\times \varphi_{N'})=R(\varphi_{\Z^m})R(\varphi_{N'})<\infty$. Hence, $N$ doesn't have the $R_{\infty}$--property.\\
	
    Now we prove the other implication. Assume that $N'$ is a finitely generated torsion-free 2-step nilpotent group which is commensurable with $H$ and which has the $R_{\infty}$--property. Define $N:=\Z^m\times N'$. Since $H^{\Q}\cong (N')^{\Q}$ and $Z(H)=\sqrt[H]{\gamma_2(H)}$ it follows by Lemma \ref{lem:GQ abelian factors} that $Z(N')=\sqrt[N']{\gamma_2(N')}$ and $(N')^{\Q}$ doesn't have an abelian factor. Theorem 3.8 in \cite{send22} now tells us that $\Spec(N)=\Spec(\Z^m)\cdot \Spec(N')$. In particular, we can conclude that $N$ has the $R_{\infty}$--property which concludes the proof.
\end{proof}
If $\Gamma(V,E)$ is a graph, then we use $V_{\text{iso}} \subset V$ to denote the set of \textit{isolated vertices}, i.e. vertices that are not adjacent to any other vertex. Now we can apply Theorem \ref{thm:Z^k times H} to our set-up of groups associated to graphs.
\begin{corollary}\label{cor:R-infinity isolated vertices}
	Let $\Gamma(V,E)$ be a finite undirected simple graph. Denote with $\tilde{\Gamma}$ the subgraph of $\Gamma$ on the vertex set $V\setminus V_{\text{iso}}$. Then $G_{\Gamma}=\Z^{|V_{\text{iso}}|}\times G_{\tilde{\Gamma}}$ and $Z(G_{\tilde{\Gamma}})=\sqrt{\gamma_2(G_{\tilde{\Gamma}})}$. In particular,
	\begin{center}
		None of the (fg. tv. 2-step nilpotent) groups commensurable with $G_{\tilde{\Gamma}}$ has the $R_{\infty}$--property\\
		\vspace{2pt}$\Updownarrow$\vspace{2pt}\\
		None of the (fg. tv. 2-step nilpotent) groups commensurable with $G_{\Gamma}$ has the $R_{\infty}$--property.
	\end{center}
\end{corollary}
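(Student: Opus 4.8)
The plan is to verify the two structural assertions and then quote Theorem~\ref{thm:Z^k times H} with $H = G_{\tilde\Gamma}$ and $m = |V_{\text{iso}}|$.

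For the decomposition $G_\Gamma = \Z^{|V_{\text{iso}}|}\times G_{\tilde\Gamma}$ I would first argue at the level of right-angled Artin groups. A vertex of $V_{\text{iso}}$ is, by definition, not adjacent to any vertex of $\Gamma$, so in the defining presentation of $A(\Gamma)$ each generator from $V_{\text{iso}}$ is subject to exactly the relations saying it commutes with every other generator. Splitting these relations off, the presentation of $A(\Gamma)$ becomes, after Tietze transformations, the standard presentation of $\Z^{|V_{\text{iso}}|}\times A(\tilde\Gamma)$; hence $A(\Gamma)\cong \Z^{|V_{\text{iso}}|}\times A(\tilde\Gamma)$. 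Since $\gamma_3(K\times L)=\gamma_3(K)\times\gamma_3(L)$ and $\gamma_3(\Z^{|V_{\text{iso}}|})=1$, passing to $2$-step nilpotent quotients gives
\[ G_\Gamma = A(\Gamma)/\gamma_3(A(\Gamma)) \;\cong\; \Z^{|V_{\text{iso}}|}\times\bigl(A(\tilde\Gamma)/\gamma_3(A(\tilde\Gamma))\bigr) = \Z^{|V_{\text{iso}}|}\times G_{\tilde\Gamma}. \]

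The heart of the proof is the identity $Z(G_{\tilde\Gamma}) = \sqrt{\gamma_2(G_{\tilde\Gamma})}$. The inclusion $\sqrt{\gamma_2(G_{\tilde\Gamma})}\subseteq Z(G_{\tilde\Gamma})$ holds for any torsion-free $2$-step nilpotent group, so only the reverse inclusion is at stake. Here I would invoke the isomorphism~\eqref{eq:isomorphismRAAGLieAlgGrp} of \cite{dk92}: after tensoring with $\Q$ it identifies $\gamma_2(G_{\tilde\Gamma})\otimes_{\Z}\Q$ with the degree-two component of $L^{\Q}(\tilde\Gamma)$, which is spanned by the brackets $[v,w]$ with $\{v,w\}\in E$ and has the $[v,w]$ with $\{v,w\}\in E$ and $v<w$ (for a fixed vertex order) as a $\Q$-basis. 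Given $g\in Z(G_{\tilde\Gamma})$, write $g\equiv\prod_{v\in V\setminus V_{\text{iso}}}v^{a_v}\pmod{\gamma_2(G_{\tilde\Gamma})}$ with $a_v\in\Z$. Because commutators are central in $G_{\tilde\Gamma}$, for each vertex $w$ one has $1 = [g,w] = \prod_{v}[v,w]^{a_v}$, which read additively in the degree-two component of $L^{\Q}(\tilde\Gamma)$ becomes $\sum_{v\,:\,\{v,w\}\in E}a_v[v,w]=0$; as the brackets occurring here are distinct basis vectors, this forces $a_v=0$ for every neighbour $v$ of $w$. Since $\tilde\Gamma$ has no isolated vertices (any $v\in V\setminus V_{\text{iso}}$ has a neighbour $w$ in $\Gamma$, which then also lies in $V\setminus V_{\text{iso}}$, so the edge $\{v,w\}$ survives in $\tilde\Gamma$), every $a_v$ vanishes, whence $g\in\gamma_2(G_{\tilde\Gamma})\subseteq\sqrt{\gamma_2(G_{\tilde\Gamma})}$.

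Finally, the Remark preceding Theorem~\ref{thm:Z^k times H} shows that $Z(G_{\tilde\Gamma})=\sqrt{\gamma_2(G_{\tilde\Gamma})}$ is equivalent to $h(Z(G_{\tilde\Gamma}))=h(\sqrt{\gamma_2(G_{\tilde\Gamma})})$, so $H:=G_{\tilde\Gamma}$ satisfies the hypothesis of that theorem; applying it with $m:=|V_{\text{iso}}|$ and the decomposition $G_\Gamma=\Z^m\times G_{\tilde\Gamma}$ from the first step delivers the displayed equivalence. The step I expect to need the most care is the second one: one must know $\gamma_2(G_{\tilde\Gamma})$ precisely enough that distinct edges incident to a common vertex give $\Q$-independent classes, which is exactly the content borrowed from \cite{dk92}. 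Alternatively one could route this through Lemma~\ref{lem:GQ abelian factors}, checking that $G_{\tilde\Gamma}^{\Q}\cong L^{\Q}(\tilde\Gamma,2)$ has no abelian direct factor, but that reduces to the same linear-algebra statement about $\Sp_{\Q}(V\setminus V_{\text{iso}})$.
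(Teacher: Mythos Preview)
Your proposal is correct and follows the paper's intended route: the corollary is stated in the paper without proof, as an immediate application of Theorem~\ref{thm:Z^k times H} once the two structural facts are checked, and you carry out exactly that plan. Your verification of $Z(G_{\tilde\Gamma})=\sqrt{\gamma_2(G_{\tilde\Gamma})}$ via commutator computations and the basis of degree-two brackets is a clean direct argument; the paper itself records these structural facts later (Lemma~\ref{lem:structure GGammak}, where $Z(G_{\Gamma(k)})=\langle V_{\text{iso}}\cup E\rangle$ and $\sqrt{\gamma_2(G_{\Gamma(k)})}=\langle E\rangle$ appear explicitly), so your self-contained verification is if anything more careful than what the paper supplies at this point.
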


\begin{remark}
    In an upcoming paper (see \cite{dl23-2}), we argue that all finitely generated torsion-free 2-step nilpotent groups with Hirsch number at most six do not have the $R_\infty$--property. Note that for all graphs $\Gamma(V,E)$ it holds that $h(G_\Gamma)=|V|+|E|$. Hence, if $|V|+|E|\leq 6$ then none of the groups commensurable with $G_\Gamma$ has the $R_\infty$--property. Thus applying Corollary \ref{cor:R-infinity isolated vertices} yields the stronger result that
    \begin{center}
        if $\Gamma(V,E)$ is a finite undirected simple graph such that $|V\setminus V_{\text{iso}}|+|E|\leq 6$, then none of the groups commensurable with $G_{\Gamma}$ has the $R_{\infty}$--property.
    \end{center}
    For example, this holds for the graph depicted below. Note that for this graph $G_\Gamma\cong \Z\times H_1\times H_1$, where $H_1$ is the Heisenberg group.
    \begin{figure}[h]
        \centering
        \begin{tikzpicture}[main_node/.style={circle,draw,fill,minimum size=4.5pt,inner sep=0, outer sep=0]}]
				\def\lengte{1.3cm}
				\node[main_node] (0) at (0,\lengte) {};
				\node[main_node] (1) at (\lengte,\lengte) {};
				\node[main_node] (2) at (\lengte,0) {};
				\node[main_node] (3) at (0,0) {};
				\node[main_node] (4) at ({0.5*\lengte},{0.5*\lengte}) {};
				
				\path[draw, line width=0.75pt]
				(0) edge 	node {} (1) 
				(2) edge 	node {} (3)
				;
	\end{tikzpicture}
    \end{figure}
\end{remark}

\subsection{The case of free nilpotent partially commutative groups}

In the final section of this paper, a result from \cite{witd23} is recalled (as Theorem \ref{thm:LowerUpperBoundNilpIndex}) which tells us for certain $c>1$ whether $A(\Gamma, c)$ has the $R_\infty$--property or not. In this section, we explore how $R_\infty$ and abstract commensurability behave within the class of free nilpotent partially commutative groups. As it turns out, the free nilpotent partially commutative group $A(\Gamma,c)$ (with $c>1$) contains a lot of information in the following sense: if $A(\Gamma,c)$ has the $R_{\infty}$--property, then all finitely generated torsion-free nilpotent groups commensurable with it must have the $R_{\infty}$-property too. In fact, even a wider family of groups are implied to have $R_\infty$, which is stated in the following proposition.

\begin{proposition}
	\label{prop:isoGradLieAlgImplicationRinfty}
	Let $\Gamma$ be a graph and $c > 1$ an integer. The following are equivalent:
    \begin{enumerate}[label = (\roman*)]
        \item \label{item:equivRAAG1} $A(\Gamma, c)$ has the $R_\infty$--property.
        \item \label{item:equivRAAG2} Every integer-like automorphism of $L^\Q(\Gamma, c)$ has an eigenvalue 1.
        \item \label{item:equivRAAG3} Every group $H$ with $L^\Q(H) \cong L^\Q(\Gamma, c)$ has the $R_\infty$--property.
    \end{enumerate}
\end{proposition}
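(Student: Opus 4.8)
My plan is to prove the cycle $\ref{item:equivRAAG3}\Rightarrow\ref{item:equivRAAG1}\Rightarrow\ref{item:equivRAAG2}\Rightarrow\ref{item:equivRAAG3}$, with Lemma \ref{lem:eigenvalue1} doing most of the work. I would first record two elementary facts. Restricting \eqref{eq:isomorphismRAAGLieAlgGrp} to the quotient by $\gamma_{c+1}$ gives $L^\Q(A(\Gamma,c))\cong L^\Q(\Gamma,c)$: the degree-$i$ parts agree for $i\le c$ and both vanish for $i>c$. Hence $A(\Gamma,c)$ is itself one of the groups $H$ occurring in \ref{item:equivRAAG3}, which already yields $\ref{item:equivRAAG3}\Rightarrow\ref{item:equivRAAG1}$. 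Secondly, since $L^\Q(\Gamma,c)$ is graded and generated in degree $1$ we have $\gamma_j(L^\Q(\Gamma,c))=\bigoplus_{i\ge j}L_i^\Q(\Gamma,c)$, so $\gr(L^\Q(\Gamma,c))\cong L^\Q(\Gamma,c)$ canonically; and for any finitely generated group $G$ and $\varphi\in\Aut(G)$ the induced graded automorphism of $L^\Q(G)$ is integer-like, because each $L_i(G)$ is a finitely generated abelian group on whose free part $\varphi$ acts by an element of $\Gl(\Z)$.

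For $\ref{item:equivRAAG2}\Rightarrow\ref{item:equivRAAG3}$ I would take a finitely generated $H$ with $L^\Q(H)\cong L^\Q(\Gamma,c)$ and pass to $\overline H:=H/\gamma_{c+1}(H)$, which is finitely generated nilpotent with $L^\Q(\overline H)\cong L^\Q(H)\cong L^\Q(\Gamma,c)$. Since $\gamma_{c+1}(H)$ is characteristic, any $\varphi\in\Aut(H)$ descends to $\overline\varphi\in\Aut(\overline H)$, and the projection $H\to\overline H$ carries $\varphi$-conjugacy classes onto $\overline\varphi$-conjugacy classes, so $R(\varphi)\ge R(\overline\varphi)$. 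By the second fact above the induced automorphism on $L^\Q(\overline H)\cong L^\Q(\Gamma,c)$ is integer-like, hence has eigenvalue $1$ by \ref{item:equivRAAG2}, so $R(\overline\varphi)=\infty$ by Lemma \ref{lem:eigenvalue1}, and therefore $R(\varphi)=\infty$. (The reverse implication $\ref{item:equivRAAG3}\Rightarrow\ref{item:equivRAAG2}$ is equally easy and would give some freedom in choosing the cycle: to an integer-like automorphism $\psi$ of $L^\Q(\Gamma,c)$ one attaches a $\psi$-invariant finitely generated lattice $H_\psi$ in the Baker--Campbell--Hausdorff group of $L^\Q(\Gamma,c)$, for which $L^\Q(H_\psi)\cong\gr(L^\Q(\Gamma,c))\cong L^\Q(\Gamma,c)$; then \ref{item:equivRAAG3} and Lemma \ref{lem:eigenvalue1} force $\psi$ to have eigenvalue $1$.)

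The substantial step is $\ref{item:equivRAAG1}\Rightarrow\ref{item:equivRAAG2}$. By Lemma \ref{lem:eigenvalue1}, \ref{item:equivRAAG1} says precisely that every graded automorphism of $L^\Q(\Gamma,c)$ that is induced by an automorphism of $A(\Gamma,c)$ has eigenvalue $1$. I would first identify these induced automorphisms: a matrix $\phi\in\Gl_{|V|}(\Z)$ acting on the abelianisation $\Z^{V}$ extends to an automorphism of $A(\Gamma,c)$ if and only if it respects the partial-commutation relations, i.e. if and only if it extends to a graded automorphism of $L^\Q(\Gamma,c)$ --- one lifts $\phi$ to an endomorphism of the free group on $V$ with the prescribed abelianisation, and the only obstruction to descending to $A(\Gamma,c)$ sits in the graded pieces of degree $\le c$, where it vanishes exactly under this condition, while $\phi^{-1}$ supplies the inverse. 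Thus the automorphisms of $L^\Q(\Gamma,c)$ coming from $\Aut(A(\Gamma,c))$ are precisely the graded automorphisms preserving the standard integral form $L^\Z(\Gamma,c)$, and \ref{item:equivRAAG1} reads: every graded automorphism of $L^\Q(\Gamma,c)$ preserving $L^\Z(\Gamma,c)$ has eigenvalue $1$. It then remains to extend this to an arbitrary integer-like automorphism $\psi$. Replacing $\psi$ by $\gr(\psi)$ (which is graded, is still integer-like, and has the same characteristic polynomial) and viewing $\Aut_g(L^\Q(\Gamma,c))$ as a linear algebraic $\Q$-group $\mathcal G\hookrightarrow\Gl(\Sp_\Q(V))$ via restriction to degree $1$ (Remark \ref{rem:RestrictionToAbelianization}), I would argue that the condition ``$1$ is an eigenvalue'' is Zariski-closed on $\mathcal G$ and holds on $\mathcal G(\Z)$ by the above; an integer-like graded automorphism has determinant $\pm1$ on each graded piece, so it lies in the $\Q$-subgroup $\mathcal G^{1}$ of $\mathcal G$ on which those determinant characters are trivial, and $\mathcal G^{1}(\Z)$ is Zariski-dense in $\mathcal G^{1}$, so the eigenvalue-$1$ locus contains $\mathcal G^{1}$, hence contains $\gr(\psi)$.

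The hard part will be exactly this last step: promoting ``every automorphism respecting the standard integral structure of $L^\Q(\Gamma,c)$ has eigenvalue $1$'' to ``every integer-like automorphism of $L^\Q(\Gamma,c)$ has eigenvalue $1$''. For a general nilpotent rational Lie algebra this is not available --- it is essentially the question left open after Proposition \ref{prop:RinftyAndCommensurability} --- so the argument must genuinely use the combinatorial structure of partially commutative Lie algebras, either through a Zariski-density statement for the algebraic group of graded automorphisms as sketched above, or through a direct construction realising the relevant characteristic polynomials by automorphisms preserving $L^\Z(\Gamma,c)$. Everything else --- the two elementary facts, the quotient construction, the Baker--Campbell--Hausdorff lattice, and the lifting of integral graded automorphisms --- I expect to be routine once Lemma \ref{lem:eigenvalue1} and \eqref{eq:isomorphismRAAGLieAlgGrp} are in hand.
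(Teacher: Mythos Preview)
Your cycle and the two ``easy'' implications match the paper's proof essentially verbatim: the paper also observes $L^\Q(A(\Gamma,c))\cong L^\Q(\Gamma,c)$ to get \ref{item:equivRAAG3}$\Rightarrow$\ref{item:equivRAAG1}, and for \ref{item:equivRAAG2}$\Rightarrow$\ref{item:equivRAAG3} passes to $H/\gamma_{c+1}(H)$, applies Lemma~\ref{lem:eigenvalue1}, and then lifts $R_\infty$ along the characteristic quotient (the paper quotes \cite{gw09-1} where you argue $R(\varphi)\ge R(\overline\varphi)$ directly; same thing).

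For the hard step \ref{item:equivRAAG1}$\Rightarrow$\ref{item:equivRAAG2} the paper takes your second option and disposes of it in one line: it invokes \cite[Corollary~3.10]{witd23}, which says that for every integer-like automorphism of $L^\Q(\Gamma,c)$ there is an automorphism of $A(\Gamma,c)$ whose induced graded automorphism has the \emph{same characteristic polynomial}. That is exactly the ``direct construction realising the relevant characteristic polynomials'' you anticipated; the paper does not attempt a density argument.

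Your Zariski-density sketch, on the other hand, has a genuine gap. Your claim that an integer-like graded automorphism has determinant $\pm1$ on every graded piece is correct (Gauss's lemma: the monic $\Q[X]$-factors of a monic $\Z[X]$-polynomial lie in $\Z[X]$, so each constant term is an integer dividing $\pm1$). The problem is the density step. In the cases where \ref{item:equivRAAG1} actually holds for small $c$ --- e.g.\ $\Gamma$ transposition-free, so every coherent component is a singleton --- the reductive part of $\mathcal G_\Gamma$ is the diagonal torus $T\cong\Gl_1^{|V|}$, whose integer points are the finite set $\{\pm1\}^{|V|}$. The determinant characters on the graded pieces of $L^\Q(\Gamma,c)$ are only $c$ characters of $T$, so for $|V|>c$ the subgroup $\mathcal G^{1}\cap T$ they cut out is a positive-dimensional subtorus, while $\mathcal G^{1}(\Z)\cap T$ stays finite. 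Hence $\mathcal G^{1}(\Z)$ is not Zariski-dense in $\mathcal G^{1}$, and the closedness of the eigenvalue-$1$ locus gives you nothing. The obstruction is precisely the failure of $\Gl_1(\Z)\subset\Gl_1$ to be dense; no choice of $\mathcal G^{1}$ based on graded determinants will repair this. So you should abandon the density route and rely on the companion-matrix construction behind \cite[Corollary~3.10]{witd23}, which is what the paper does.
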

\begin{proof}
    $(i) \Rightarrow (ii)$. Take any integer-like automorphism $\varphi$ of $L(\Gamma, c)$. By Corollary 3.10 from \cite{witd23}, there exists an automorphism $\phi$ on $A(\Gamma, c)$ such that the induced automorphism $\overline{\phi}$ on $L^\Q(\Gamma, c)$ has the same characteristic polynomial as $\varphi$. Using Lemma \ref{lem:eigenvalue1} and the assumption, the automorphism $\overline{\phi}$ has an eigenvalue 1 and hence $\varphi$ has an eigenvalue 1.

    $(ii) \Rightarrow (iii)$. Let us write $\tilde{H}$ for the group $H/\gamma_{c+1}(H)$. Clearly, we have the isomorphisms
	\[ L^\Q(\tilde{H}) \cong \frac{L^\Q(H)}{\gamma_{c+1}(L^\Q(H))} \cong \frac{L^\Q(\Gamma, c)}{\gamma_{c+1}(L^\Q(\Gamma, c))} \cong L^\Q(\Gamma, c). \]
    Let $\varphi$ be any automorphism of $\tilde{H}$ and write $\overline{\varphi}$ for the induced graded automorphism on $L^\Q(\Gamma, c)$ where we use the identification above. Note that $\overline{\varphi}$ is integer-like. Thus, by assumption, it has an eigenvalue 1. Lemma \ref{lem:eigenvalue1} then implies that $R(\varphi) = \infty$. Since $\varphi$ was an arbitrarily chosen automorphism of $\tilde{H}$, this proves that $\tilde{H}$ has the $R_\infty$--property and by Lemma 1.1. from \cite{gw09-1}, so does $H$.

    $(iii) \Rightarrow (i)$. This follows trivially if we substitute $A(\Gamma, c)$ for $H$ in the assumption. 
\end{proof}

\begin{remark}
    Note that any finitely generated torsion-free nilpotent group $H$, which is abstractly commensurable to $A(\Gamma, c)$, satisfies \ref{item:equivRAAG3} of the above proposition. Indeed, we get that $M(H) \cong M(A(\Gamma, c))$ and thus also $L^\Q(H) \cong \gr(M(H)) \cong \gr(M(A(\Gamma, c))) \cong L^\Q(A(\Gamma, c)) \cong L^\Q(\Gamma, c)$.
\end{remark}

\begin{remark}
	Note that in \ref{item:equivRAAG3} of the above proposition one can, in general, not replace the field $\Q$ with a non-trivial field extension of it (which would be a weaker assumption). Let us give a concrete example. Consider the 2-step nilpotent group $H$ with presentation
	\begin{equation}
		H = \left\langle \begin{array}{l}
			x_1, x_2, x_3, x_4,\\
			y_1, y_2, y_3
		\end{array} \middle\vert 
		\begin{array}{llll}
			\,[x_1, x_3] = y_1, 	& [x_1, x_4] = y_2, & [x_3, x_4] = y_3, &[y_i, y_j] = 1,\\
			\,[x_2, x_4] = y_1^2, 	& [x_2, x_3] = y_2, & [x_1, x_2] = 1
		\end{array} 
		\right\rangle.
	\end{equation}
	This group does not have the $R_\infty$--property since the automorphism of $H$ that is defined by
	\[ \begin{array}{ll}
		x_1 \mapsto x_1^{-1} x_2 		\quad\quad& y_1 \mapsto y_1^3 y_2^{-2}\\
		x_2 \mapsto x_1^{2} x_2^{-1} 	\quad\quad& y_2 \mapsto y_1^{-4} y_2^{3}\\
		x_3 \mapsto x_3^{-1} x_4 		\quad\quad& y_3 \mapsto y_3^{-1}\\
		x_4 \mapsto x_3^{2} x_4^{-1} 	\quad\quad&
	\end{array} \]
	induces an automorphism on $L^\C(H)$ which does not have 1 as an eigenvalue (see Lemma \ref{lem:eigenvalue1}). On the other hand, consider the graph $\Gamma$ as drawn below.
	\begin{figure}[H]
		\centering
		\begin{tikzpicture}
			\draw (-1.5, 0) -- (-0.5, 0);
			\draw (-0.5, 0) -- (0.5, 0);
			\draw (0.5, 0) -- (1.5, 0);
			\filldraw [black] (-1.5, 0) circle (2.5pt) node[below = 0.2] {$v_1$};
			\filldraw [black] (-0.5, 0) circle (2.5pt) node[below = 0.2] {$v_2$};
			\filldraw [black] (0.5, 0) circle (2.5pt) node[below = 0.2] {$v_3$};
			\filldraw [black] (1.5, 0) circle (2.5pt) node[below = 0.2] {$v_4$};
		\end{tikzpicture}
	\end{figure}
	By Theorem 1.4 in \cite{witd23}, $G_{\Gamma}=A(\Gamma,2)$ has the $R_\infty$--property. However, over the field $K = \Q(\sqrt{2})$, we have a graded Lie algebra isomorphism $L^K(H) \cong L^K(\Gamma, 2)$ which is determined by
	\[ \begin{array}{ll}
		x_1 \mapsto v_1 + v_4 & x_3 \mapsto v_2 + v_3\\
		x_2 \mapsto \sqrt{2}(v_1 - v_4) \quad& x_4 \mapsto \sqrt{2}(v_2 - v_3).
	\end{array} \]
\end{remark}

\newpage
\section{Groups associated to edge-weighted graphs}
\label{sec:weightedRAAG}
\subsection{Relations on the vertices and edges}
\label{sec:relationsOnVerticesAndEdges}
In this section, we introduce notations and definitions used throughout the rest of the paper. For a finite undirected simple graph $\Gamma=(V,E)$ we will first define preorders on $V$ and $E$ (denoted with $\prec$) and induced equivalence relations on $V$ and $E$ (denoted with $\sim$), for which the sets of equivalence classes will be denoted by $\Lambda$ and $M$, respectively. Next, we define the quotient graph $\olGamma=(\Lambda,\olE,\Psi)$ associated to $\Gamma$. At last, we define partial orders on $\Lambda$ and $M$ (denoted with $\olprec$) and refine these orders to get total orders on $V$ and $E$ (denoted with $<$). All definitions are illustrated by an example at the end of the section.

Let $\Gamma=(V,E)$ be a finite undirected simple graph. Let $k:E\to \N_0$ be a map associating to each edge $e\in E$ a weight $k(e)\in \N_0$. We use $\Gamma(k)$ to denote the graph $\Gamma$ with weight function $k$ on its edges. From now on, we abbreviate $\Gamma(1)$ to $\Gamma$ (where $1:E\to \N_0:e\mapsto 1$).\\
Let $v\in V$ be a vertex of the graph $\Gamma$. We define its \textit{open} and \textit{closed neighbourhood} ($\Omega'(v)$ and $\Omega(v)$ respectively) by setting
\[ \Omega'(v):=\{w\in V\mid \{v,w\}\in E\} \quad \text{and} \quad \Omega(v):=\Omega'(v)\cup \{v\}. \]
Next, we define a preorder $\prec$ (i.e. a reflexive and transitive relation) on the vertex set $V$ by setting for any $v,w\in V$:
\[v\prec w \quad \Leftrightarrow \quad \Omega'(v)\subseteq \Omega(w).\]
This preorder on $V$ induces a preorder $\prec$ on $E$ by defining for all $\{v,w\}, \{v',w'\}\in E$ that
\[\{v,w\}\prec \{v',w'\}\quad \Longleftrightarrow\quad (v\prec v' \text{ and } w\prec w')\text{ or } (v\prec w' \text{ and } w\prec v').\]
Using these preorders, we define an equivalence relation $\sim$ on $V$ and $\sim$ on $E$ by setting for all $v,w\in V$ and $e,e'\in E$ that
\begin{align*}
    v\sim w &\quad\Longleftrightarrow\quad v\prec w \text{ and } w\prec v\\
    e\sim e' &\quad\Longleftrightarrow\quad e\prec e' \text{ and } e'\prec e.
\end{align*}
Note that for two vertices $v,w\in V$ the expression $v\sim w$ is equivalent to stating that the transposition $(v\: w)$ is an automorphism of the graph $\Gamma$. If none of the transpositions is a graph automorphism of $\Gamma$ (or equivalently, no two vertices are equivalent), we say that $\Gamma$ is \textit{transposition-free}. We let $[v]$ and $[e]$ denote the equivalence classes of $v\in V$ and $e\in E$, respectively. The equivalence classes on $V$ are called the \textit{coherent components} of $\Gamma$ and we denote the quotient space with $\Lambda:=V/\sim$. The correspondent quotient space $E/\sim$ is written as $M$. The map $\Psi:\Lambda\to \N_0$ sends any coherent component to the number of vertices it contains. Sometimes we use the abbreviation $\vert\lambda\vert$ instead of $\Psi(\lambda)$. Remark that if two vertices are connected via an edge, then all the edges between their corresponding coherent components are contained in $E$, i.e. for all (not necessarily distinct) $[v],[w]\in \Lambda$ it holds that
\[ \{v,w\}\in E\quad \Longrightarrow\quad \{v',w'\}\in E\text{ for \textbf{all} } v'\in [v] \text{ and }w'\in [w].\]
Hence, the graph $\Gamma$ induces unambiguously a graph $\olGamma$ with vertex set $\Lambda$ and edge set $\olE$ defined by
\[ \olE:=\{\{[v],[w]\}\mid \{v,w\}\in E\}. \]
The triple $\olGamma=(\Lambda,\olE,\Psi)$ is called the \textit{quotient graph} of $\Gamma$. Each edge of the quotient graph corresponds with a collection of edges of $E$ (by considering the fibers of the map $E\to \olE:\{v,w\}\mapsto \{[v],[w]\}$). The next lemma shows us, among other, that the map
\begin{equation}
    \label{eq:identificationEbarAndM}
    M \to \overline{E}: [\{v,w\}] \mapsto \{ [v], [w] \}
\end{equation}
is a bijection. By consequence, we can identify $M$ with $\overline{E}$.
\begin{lemma}\label{lem:equivalence class e}
    For any $e=\{v,w\}\in E$ it holds that
    \[[e]= \Big\{\{v',w'\} \: \Big| \: v'\in [v], w'\in [w] \Big\}.\]
\end{lemma}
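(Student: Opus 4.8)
The plan is to unwind the definitions and prove the two inclusions separately. Let me think about what's being claimed: for an edge $e = \{v,w\} \in E$, the equivalence class $[e]$ under $\sim$ on $E$ equals the set of all edges $\{v',w'\}$ with $v' \in [v]$ and $w' \in [w]$.

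First I would establish the inclusion $\supseteq$. Take $v' \in [v]$ and $w' \in [w]$; so $v' \sim v$ and $w' \sim w$. I should first note that $\{v', w'\}$ is indeed an edge — this follows from the displayed implication just before the lemma (if $\{v,w\} \in E$ then $\{v',w'\} \in E$ for all $v' \in [v]$, $w' \in [w]$). Then since $v \prec v'$, $v' \prec v$, $w \prec w'$, $w' \prec w$, the definition of the preorder on $E$ gives $\{v,w\} \prec \{v',w'\}$ (using the first disjunct, $v \prec v'$ and $w \prec w'$) and likewise $\{v',w'\} \prec \{v,w\}$, hence $\{v',w'\} \sim \{v,w\}$, i.e. $\{v',w'\} \in [e]$.

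Next, the inclusion $\subseteq$. Take $e' = \{v',w'\} \in [e]$, so $e' \sim e$, meaning $e \prec e'$ and $e' \prec e$. By definition of $\prec$ on $E$, from $e \prec e'$ we get either ($v \prec v'$ and $w \prec w'$) or ($v \prec w'$ and $w \prec v'$). After possibly swapping the labels of $v'$ and $w'$ (which does not change the set $\{v',w'\}$ nor the pair of sets $\{[v],[w]\}$ we want to land in, once we know the reverse relations), we may assume $v \prec v'$ and $w \prec w'$. The task is then to upgrade this to $v \sim v'$ and $w \sim w'$, i.e. to also get $v' \prec v$ and $w' \prec w$. The subtlety — and this is the step I expect to be the main obstacle — is that $e' \prec e$ only gives us ($v' \prec v$ and $w' \prec w$) \emph{or} ($v' \prec w$ and $w' \prec v$); one has to rule out, or rather handle, the "crossed" case where the witnessing pairing for $e' \prec e$ is the opposite of the one for $e \prec e'$. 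Here I would argue: if $v \prec v'$, $w \prec w'$ (from $e \prec e'$) and $v' \prec w$, $w' \prec v$ (the crossed witness for $e' \prec e$), then by transitivity $v \prec v' \prec w$ and $w \prec w' \prec v$, so $v \prec w$ and $w \prec v$, giving $v \sim w$. Combined with $v' \prec w \sim v$ and $w \prec w' $, $v \prec v'$ one chases through to conclude all four of $v, w, v', w'$ are $\sim$-equivalent, so again $v' \in [v] = [w]$ and $w' \in [w] = [v]$, and the claimed description still holds. In the non-crossed case, $v' \prec v$ and $w' \prec w$ combine directly with $v \prec v'$, $w \prec w'$ to give $v \sim v'$ and $w \sim w'$.

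So the structure is: (1) reduce by symmetry to a fixed pairing witnessing $e \prec e'$; (2) split on which pairing witnesses $e' \prec e$; (3) in the aligned case conclude immediately; (4) in the crossed case use transitivity of $\prec$ to collapse all four vertices into one coherent component, which still yields membership in the right-hand set. The $\supseteq$ direction also implicitly uses the displayed edge-completeness implication to know $\{v',w'\} \in E$ in the first place. I would keep the write-up short, since once the case analysis is set up each case is a one-line transitivity argument.
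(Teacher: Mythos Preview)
Your proof is correct and follows essentially the same approach as the paper's: both directions are handled by unwinding the definitions, and for $\subseteq$ the main point is the case split on how the two preorder relations $e \prec e'$ and $e' \prec e$ are witnessed, with the ``crossed'' case collapsing all four vertices into one coherent component via transitivity. Your version is slightly tidier in that you use the relabelling symmetry of $\{v',w'\}$ to cut four cases to two, whereas the paper writes out one crossed case and leaves the remaining cases to the reader, but the underlying argument is identical.
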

\begin{proof}
The $\supseteq$-inclusion follows directly by using the definition of $\prec$ and $\sim$ on $E$. For the other inclusion let us fix some $e'=\{v',w'\}\in [e]$. Hence, it holds that
\begin{align*}
    (v'\prec v \text{ and } w'\prec w) &\text{ or } (v'\prec w \text{ and } w'\prec v) \text{ and }\\
    (v\prec v' \text{ and } w\prec w') &\text{ or } (v\prec w' \text{ and } w\prec v').
\end{align*}
Note that if for example ($v'\prec v \text{ and } w'\prec w$) and ($v\prec w' \text{ and } w\prec v'$) then $v\prec w'\prec w\prec v' \prec v$ and thus it follows that $v\sim w\sim v'\sim w'$. Hence, in this particular case $e'$ is definitely contained in the right-hand side. Considering all the possible cases, one can conclude the rest of the proof.
\end{proof}

Moreover, the preorders $\prec$ on the vertices and the edges induce partial orders $\olprec$ on $\Lambda$ and $M$ by setting for all $v,w\in V$ and $e,e'\in E$ that
\begin{align*}
    [v]\olprec[w] &\quad\Longleftrightarrow\quad v\prec w\\
    [e]\olprec[e'] &\quad\Longleftrightarrow\quad e\prec e'.
\end{align*}
Using these partial orders, we can fix total orders
\[V/\sim\, = \Lambda = \{\lambda_1,\dots,\lambda_r\} \quad \text{and} \quad E/\sim\, = M = \{\mu_1,\dots,\mu_s\}\] with the property that $i\leq j$ if $\lambda_i\olprec\lambda_j$ and $i\leq j$ if $\mu_i\olprec\mu_j$. We can refine these orders to total orders on $V$ and $E$ which we will both denote with $<$.

\begin{example}
	We explain the introduced notation using a concrete example. We consider the graph $\Gamma$ in Figure \ref{subfig:Graph}.
	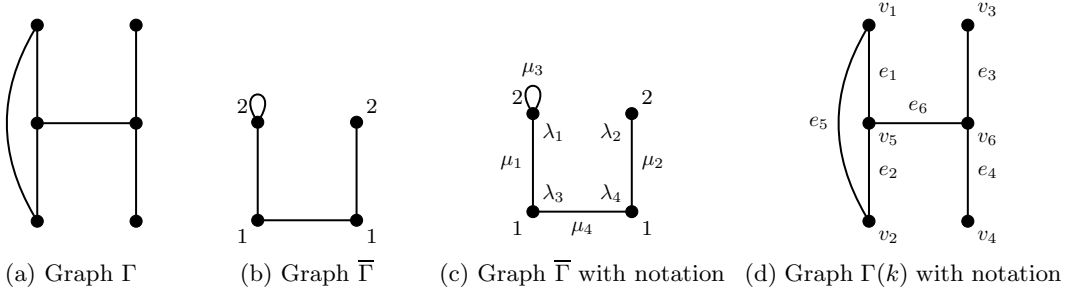
\begin{figure}[H]
		\begin{subfigure}{0.2\textwidth}
			\centering
			\begin{tikzpicture}[main_node/.style={circle,draw,fill,minimum size=4.5pt,inner sep=0, outer sep=0]}]
				\def\lengte{1.3cm}
				\node[main_node] (0) at (0,{2*\lengte}) {};
				\node[main_node] (1) at (0,\lengte) {};
				\node[main_node] (2) at (0,0) {};
				\node[main_node] (3) at (\lengte,{2*\lengte}) {};
				\node[main_node] (4) at (\lengte,\lengte) {};
				\node[main_node] (5) at (\lengte,0) {};
				\node[anchor=north east,xshift=-2pt,yshift=-2pt,outer sep=0, inner sep=0] at (5.south west) {\phantom{\footnotesize{1}}};
				
				\path[draw, line width=0.75pt]
				(0) edge 	node {} (1) 
				(1) edge 	node {} (2)
				(0) edge[bend right] 	node {} (2) 
				(1) edge 	node {} (4) 
				(3) edge 	node {} (4) 
				(4) edge 	node {} (5)  
				;
			\end{tikzpicture}
			\caption{Graph $\Gamma$}
			\label{subfig:Graph}
		\end{subfigure}
		\begin{subfigure}{0.2\textwidth}
			\centering
			\begin{tikzpicture}[main_node/.style={circle,draw,fill,minimum size=4.5pt,inner sep=0, outer sep=0]}]
				\def\lengte{1.3cm}
				\node[main_node] (0) at (0,\lengte) {};
				\node[anchor=south east,xshift=-2pt,yshift=2pt,outer sep=0, inner sep=0] at (0.north west) {\footnotesize{2}};
				\node[main_node] (1) at (0,0) {};
				\node[anchor=north east,xshift=-2pt,yshift=-2pt,outer sep=0, inner sep=0] at (1.south west) {\footnotesize{1}};
				\node[main_node] (2) at (\lengte,\lengte) {};
				\node[anchor=south west,xshift=2pt,yshift=2pt,outer sep=0, inner sep=0] at (2.north east) {\footnotesize{2}};
				\node[main_node] (3) at (\lengte,0) {};
				\node[anchor=north west,xshift=2pt,yshift=-2pt,outer sep=0, inner sep=0] at (3.south east) {\footnotesize{1}};
				
				\path[draw, line width=0.75pt]
				(0) edge[out=60,in=120,looseness=15] node {} (0)
				(0) edge node {} (1) 
				(1) edge node {} (3) 
				(3) edge node {} (2) 
				;
			\end{tikzpicture}
			\caption{Graph $\olGamma$}
			\label{subfig:QuotientGraph}
		\end{subfigure}
		\begin{subfigure}{0.27\textwidth}
			\centering
			\begin{tikzpicture}[main_node/.style={circle,draw,fill,minimum size=4.5pt,inner sep=0, outer sep=0]}]
				\def\lengte{1.3cm}
				\node[main_node] (0) at (0,\lengte) {};
				\node[anchor=south east,xshift=-2pt,yshift=2pt,outer sep=0, inner sep=0] at (0.north west) {\footnotesize{2}};
				\node[anchor=north west,xshift=2pt,yshift=-2pt,outer sep=0, inner sep=0] at (0.south east) {\scalebox{0.8}{$\lambda_1$}};
				\node[main_node] (1) at (0,0) {};
				\node[anchor=north east,xshift=-2pt,yshift=-2pt,outer sep=0, inner sep=0] at (1.south west) {\footnotesize{1}};
				\node[anchor=south west,xshift=2pt,yshift=2pt,outer sep=0, inner sep=0] at (1.north east) {\scalebox{0.8}{$\lambda_3$}};
				\node[main_node] (2) at (\lengte,\lengte) {};
				\node[anchor=south west,xshift=2pt,yshift=2pt,outer sep=0, inner sep=0] at (2.north east) {\footnotesize{2}};
				\node[anchor=north east,xshift=-2pt,yshift=-2pt,outer sep=0, inner sep=0] at (2.south west) {\scalebox{0.8}{$\lambda_2$}};
				\node[main_node] (3) at (\lengte,0) {};
				\node[anchor=north west,xshift=2pt,yshift=-2pt,outer sep=0, inner sep=0] at (3.south east) {\footnotesize{1}};
				\node[anchor=south east,xshift=-2pt,yshift=2pt,outer sep=0, inner sep=0] at (3.north west) {\scalebox{0.8}{$\lambda_4$}};
				
				\path[draw, line width=0.75pt]
				(0) edge[out=60,in=120,looseness=15] node[above] {\scalebox{0.8}{$\mu_3$}} (0)
				(0) edge node[left] {\scalebox{0.8}{$\mu_1$}} (1) 
				(1) edge node[below] {\scalebox{0.8}{$\mu_4$}} (3) 
				(3) edge node[right] {\scalebox{0.8}{$\mu_2$}} (2) 
				;
			\end{tikzpicture}
			\caption{Graph $\olGamma$ with notation}
			\label{subfig:QuotientGraphWithNotation}
		\end{subfigure}
		\begin{subfigure}{0.28\textwidth}
			\centering
			\begin{tikzpicture}[main_node/.style={circle,draw,fill,minimum size=4.5pt,inner sep=0, outer sep=0]}]
				\def\lengte{1.3cm}
				\node[main_node] (0) at (0,{2*\lengte}) {};
				\node[anchor=south west,xshift=2pt,yshift=2pt,outer sep=0, inner sep=0] at (0.north east) {\scalebox{0.8}{$v_1$}};
				\node[main_node] (1) at (0,\lengte) {};
				\node[anchor=north west,xshift=2pt,yshift=-2pt,outer sep=0, inner sep=0] at (1.south east) {\scalebox{0.8}{$v_5$}};
				\node[main_node] (2) at (0,0) {};
				\node[anchor=north west,xshift=2pt,yshift=-2pt,outer sep=0, inner sep=0] at (2.south east) {\scalebox{0.8}{$v_2$}};
				\node[main_node] (3) at (\lengte,{2*\lengte}) {};
				\node[anchor=south west,xshift=2pt,yshift=2pt,outer sep=0, inner sep=0] at (3.north east) {\scalebox{0.8}{$v_3$}};
				\node[main_node] (4) at (\lengte,\lengte) {};
				\node[anchor=north west,xshift=2pt,yshift=-2pt,outer sep=0, inner sep=0] at (4.south east) {\scalebox{0.8}{$v_6$}};
				\node[main_node] (5) at (\lengte,0) {};
				\node[anchor=north west,xshift=2pt,yshift=-2pt,outer sep=0, inner sep=0] at (5.south east) {\scalebox{0.8}{$v_4$}};
				\node[anchor=north east,xshift=-2pt,yshift=-2pt,outer sep=0, inner sep=0] at (5.south west) {\phantom{\footnotesize{1}}};
				
				\path[draw, line width=0.75pt]
				(0) edge 	node[right] {\scalebox{0.8}{$e_1$}} (1) 
				(1) edge 	node[right] {\scalebox{0.8}{$e_2$}} (2)
				(0) edge[bend right] 	node[left] {\scalebox{0.8}{$e_5$}} (2) 
				(1) edge 	node[above] {\scalebox{0.8}{$e_6$}} (4) 
				(3) edge 	node[right] {\scalebox{0.8}{$e_3$}} (4) 
				(4) edge 	node[right] {\scalebox{0.8}{$e_4$}} (5)
				;
			\end{tikzpicture}
			\caption{Graph $\Gamma(k)$ with notation}
			\label{subfig:GraphWithNotation}
		\end{subfigure}
		\caption{Graphs $\Gamma(k)$ and $\olGamma$ with introduced notation}
		\label{fig:ExampleNotation}
	\end{figure}
	We construct the quotient graph $\olGamma$ and we display the size of each coherent component next to the corresponding vertex of $\olGamma$ (see Figure \ref{subfig:QuotientGraph}). Next, we look at the induced relation $\olprec$ on $\Lambda$. We could for example fix the total order on $\Lambda$ as depicted in Figure \ref{subfig:QuotientGraphWithNotation}. Indeed, the relations on $\Lambda$ are
    \[\lambda_1\olprec \lambda_3,\: \lambda_2\olprec \lambda_3 \text{ and } \lambda_2\olprec \lambda_4\]
    and thus this total order satisfies that $i\leq j$ if $\lambda_i\olprec\lambda_j$. We could have for example also interchanged $\lambda_1$ and $\lambda_2$. Now we fix an order inside each coherent component and get the total order on $V$ depicted in Figure \ref{subfig:GraphWithNotation}.

    The only relations on the edges are
    \[\{v_1,v_5\}\sim \{v_2,v_5\} \text{ and } \{v_3,v_6\}\sim \{v_4,v_6\}\]
    and thus the induced partial order $\olprec$ on $\olE$ is the identity relation. Hence, we could for example fix the total order
    \[[\{v_1,v_5\}]<[\{v_3,v_6\}]<[\{v_1,v_2\}]<[\{v_5,v_6\}]\]
    on $\olE$ and refine the order to the following total order on $E$
    \[\{v_1,v_5\}<\{v_2,v_5\}<\{v_3,v_6\}<\{v_4,v_6\}<\{v_1,v_2\}<\{v_5,v_6\}.\]
\end{example}

\subsection{Definition and structure of the groups \texorpdfstring{$G_{\Gamma(k)}$}{G_Gamma(k)}}\label{sec:definition Ggamma(k)}
The total order on the vertices and edges allows us to construct and study multiple groups associated to a graph $\Gamma$. For a graph $\Gamma(k)$ with weight function $k:E\to \N_0$, we define $G_{\Gamma(k)}$ by
\begin{align*}
	G_{\Gamma(k)}&:=\Bigg\langle V \cup E \;\; \Bigg| \;\; [V,E]= [E, E] = 1 \text{ and } \forall v, w \in V\text{ with }v<w:\\
	& \hspace{15em} [v,w]=\begin{cases}
		e^{k(e)} &\text{if }  e:=\{v,w\}\in E\\
		1 &\text{else}
	\end{cases}	\Bigg\rangle.
\end{align*}
Note that $G_{\Gamma(k)}\cong G_{\Gamma}$ if $k:E\to\N_0: e\mapsto 1$.

\begin{remark}
    The definition of $G_{\Gamma(k)}$ is independent of the choice of total order on the vertices in the sense that one gets isomorphic groups for different total orders. Indeed, let $\tilde{<}$ be another total order on the vertices $V$. Then it is not hard to check that the map $\theta$, which is the identity on $V$ and is defined for all $e=\{v,w\}\in E$ by
    \[\theta(e)=\begin{cases}
        e &\text{if } v<w \Leftrightarrow v\tilde{<}w\\
        e^{-1} &\text{otherwise},
    \end{cases}\]
    induces an isomorphism from the group defined using $<$ to the group defined using $\tilde{<}$.
\end{remark}

\begin{example}
	Consider the graph $\Gamma = (V, E)$ where $V = \{v_1, v_2\}$ and $E = \{\{v_1, v_2\}\}$ and for any $n \in \N_0$, let $k:E \to \N_0$ be the weight function on the singleton set $E$ with value $n$.
    \begin{figure}[H]
        \centering
			\begin{tikzpicture}[main_node/.style={circle,draw,fill,minimum size=4.5pt,inner sep=0, outer sep=0]}]
				\def\lengte{1.3cm}
				\node[main_node] (0) at (0,0) {};
                \node[anchor=south east,xshift=-2pt,yshift=2pt,outer sep=0, inner sep=0] at (0.north west) {$v_1$};
				\node[main_node] (1) at (\lengte,0) {};
                \node[anchor=south west,xshift=2pt,yshift=2pt,outer sep=0, inner sep=0] at (1.north east) {$v_2$};
				
				\path[draw, line width=0.75pt]
				(0) edge 	node[above] {$n$} (1)  
				;
			\end{tikzpicture}
    \end{figure}
    Then for each $n \in \N_0$, the group $G_{\Gamma(k)}$ is isomorphic to the matrix group
	\[ H_n = \left\{ \begin{pmatrix}
		1 & nx & z\\
		0 & 1 & y\\
		0 & 0 & 1
	\end{pmatrix} \: \middle| \: x,y,z \in \Z \right\} \]
	where the isomorphism is given by
	\[ v_1 \mapsto \begin{pmatrix}
		1 & n & 0\\
		0 & 1 & 0\\
		0 & 0 & 1
	\end{pmatrix}, \quad v_2 \mapsto \begin{pmatrix}
		1 & 0 & 0\\
		0 & 1 & 1\\
		0 & 0 & 1
	\end{pmatrix}, \quad \{v_1, v_2\} \mapsto \begin{pmatrix}
		1 & 0 & 1\\
		0 & 1 & 0\\
		0 & 0 & 1
	\end{pmatrix}\]
	In particular, when $n = 1$, $G_{\Gamma(k)} \cong G_{\Gamma}$ is isomorphic to the Heisenberg group. It is well-known that the groups $H_n$ for $n \in \N_0$ are, up to isomorphism, all the possible groups abstractly commensurable to $H_1$. Moreover, $\Spec(H_n)=2\N_0\cup \{\infty\}$ for all $n\in \N_0$ (see for example \cite{roma11-1}). Hence, $H_1$ is an example of a finitely generated torsion-free 2-step nilpotent group such that all groups commensurable with $H_1$ have the same (infinite) Reidemeister spectrum.
\end{example}

It will be helpful to address the elements $e^{k(e)} \in G_{\Gamma(k)}$, so we define
\[ E^k:=\{e^{k(e)}\mid e\in E\}. \]
Recall that $V_{\text{iso}} \subset V$ denotes the set of \textit{isolated vertices} of $\Gamma$.

The next lemma provides insight into the structure of the group $G_{\Gamma(k)}$.
\begin{lemma}\label{lem:structure GGammak}
        Denote $V=\{v_1,\dots,v_n\}$ and $E=\{e_1,\dots,e_m\}$. Then any element of $G_{\Gamma(k)}$ can be uniquely expressed as a product
	\[\prod_{i=1}^{n}v_i^{z_i}\prod_{j=1}e^{t_j}\]
	with $z_i,t_j\in \Z$. Moreover, the centre and the factors of the (adapted) lower central series of $G_{\Gamma(k)}$ are given by
	\begin{alignat*}{2}
        Z(G_{\Gamma(k)})&= \left\langle V_{\text{iso}} \cup E \right\rangle \cong \mathbb{Z}^{\vert V_{\text{iso}} \cup E \vert}, & &\\
	\gamma_2(G_{\Gamma(k)})&=\left\langle E^k\right\rangle\cong \Z^{\vert E\vert}, \quad \quad& \frac{G_{\Gamma(k)}}{\gamma_2(G_{\Gamma(k)})}&\cong \Z^{\vert V\vert}\times \bigtimes_{e\in E} \frac{\Z}{k(e)\Z},\\
	\sqrt{\gamma_2(G_{\Gamma(k)})}&=\langle E\rangle\cong \Z^{\vert E\vert}, \quad \quad& \frac{G_{\Gamma(k)}}{\sqrt{\gamma_2(G_{\Gamma(k)})}}&=\bigtimes_{v\in V}\left\langle v\sqrt{\gamma_2(G_{\Gamma(k)})}\right\rangle\cong \Z^{\vert V\vert}.
	\end{alignat*}
	Moreover, the identity on $V$ induces an injective morphism $\iota:G_{\Gamma}\xhookrightarrow{} G_{\Gamma(k)}$ with $G_{\Gamma}\cong \Im(\iota)=\langle V\cup E^k\rangle$.
	In particular, $G_{\Gamma}$ is isomorphic to the finite index subgroup $\langle V\cup E^k\rangle\subset G_{\Gamma(k)}$ with index $\prod_{e\in E} k(e)$.
\end{lemma}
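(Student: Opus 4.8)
The plan is to analyze the defining presentation of $G_{\Gamma(k)}$ directly and establish all stated facts from the normal form of its elements.

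First I would establish the normal form. Since the presentation states that $[V,E] = [E,E] = 1$, the subgroup $\langle E \rangle$ is central, and every generator $e \in E$ commutes with everything. Modulo this central subgroup, the group is generated by $V$ with all commutators $[v,w]$ equal to powers of central elements, so $G_{\Gamma(k)}$ is 2-step nilpotent with $\gamma_2(G_{\Gamma(k)}) \subseteq \langle E^k \rangle$. An arbitrary word in the generators can therefore be rewritten, using the commutation relations to collect all $v_i$ to the front in increasing order (each swap introducing a central correction term in $\langle E^k\rangle \subseteq \langle E\rangle$) and then collecting the $e_j$'s, into the claimed form $\prod_i v_i^{z_i} \prod_j e_j^{t_j}$. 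For uniqueness I would exhibit a concrete faithful action or model: map $G_{\Gamma(k)}$ onto the explicit group of upper unitriangular-type matrices (generalizing the Heisenberg example given just above the lemma), or equivalently check that the presentation defines a group of the expected Hirsch length $|V| + |E|$ by noting it is a central extension $1 \to \Z^{|E|} \to G_{\Gamma(k)} \to (\text{abelian group}) \to 1$ and counting. Concretely, I would build a set-theoretic section and verify the cocycle is consistent, or simply invoke that collection processes in nilpotent presentations of this polycyclic type yield unique normal forms once one checks the relations are compatible; this uniqueness step is the main technical obstacle, though it is routine given the explicitly abelian structure.

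Next, with the normal form in hand, the structural statements follow by direct computation. For $\gamma_2$: each generating commutator $[v,w]$ equals $e^{k(e)}$ or $1$, and since everything in $\langle E \rangle$ is central, $\gamma_2(G_{\Gamma(k)}) = \langle E^k \rangle$; the uniqueness of normal forms shows $\langle E^k\rangle \cong \Z^{|E|}$ (the $e_j$ are independent), and likewise $\langle E \rangle = \sqrt{\gamma_2(G_{\Gamma(k)})} \cong \Z^{|E|}$ since $e_j^{k(e_j)} \in \gamma_2$ forces $e_j$ into the isolator, while no further elements lie in the isolator by the normal form. For the centre: an element $\prod v_i^{z_i}\prod e_j^{t_j}$ is central iff it commutes with each $v_\ell$, which by expanding $[\prod v_i^{z_i}, v_\ell]$ gives a product of the edge-powers $e_{\{v_i,v_\ell\}}^{\pm z_i k(\cdot)}$ that must vanish; since these edge elements are independent, this forces $z_i = 0$ for every $v_i$ adjacent to something, i.e. the $v_i$-part is supported on $V_{\text{iso}}$, giving $Z(G_{\Gamma(k)}) = \langle V_{\text{iso}} \cup E\rangle \cong \Z^{|V_{\text{iso}}\cup E|}$. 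The two quotient computations are then immediate from the normal form: modulo $\gamma_2 = \langle E^k\rangle$ the images of the $v_i$ are free of rank $|V|$ and each $e_j$ has order $k(e_j)$; modulo $\sqrt{\gamma_2} = \langle E \rangle$ all $e_j$ die and the $v_i$ become a free abelian basis of rank $|V|$.

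Finally, for the embedding statement: the assignment $v \mapsto v$ for $v \in V$ extends to a homomorphism $\iota : G_\Gamma \to G_{\Gamma(k)}$ because in $G_{\Gamma(k)}$ one has $[v,w] = e^{k(e)}$, so setting $\iota$ to send the edge generator of $G_\Gamma$ (which is $[v,w]$ there, as $k \equiv 1$) to $e^{k(e)} \in E^k$ respects all relations; its image is exactly $\langle V \cup E^k\rangle$. Injectivity follows from comparing normal forms — an element of $G_\Gamma$ in normal form maps to an element of $G_{\Gamma(k)}$ whose normal form has the same (vertex-exponents, edge-exponents-scaled-by-$k$) data, hence is trivial only if the original was. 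That $G_\Gamma \cong \langle V\cup E^k\rangle$ has index $\prod_{e\in E}k(e)$ in $G_{\Gamma(k)}$ then follows since the quotient $G_{\Gamma(k)}/\langle V\cup E^k\rangle \cong \bigtimes_{e\in E}\Z/k(e)\Z$, read off from the normal form modulo the subgroup generated by $V$ and the $e_j^{k(e_j)}$. I expect the uniqueness of the normal form (equivalently, that the presented group genuinely has Hirsch length $|V|+|E|$ rather than collapsing) to be the only point requiring real care; everything else is bookkeeping with the commutation relations.
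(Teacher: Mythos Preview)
The paper does not actually supply a proof of this lemma; it is stated and then used. Your proposal is correct and is exactly the standard verification one would carry out: establish the normal form via collection in a polycyclic/nilpotent presentation, then read off the centre, the lower central series terms, their isolators, and the quotients directly from that normal form, and finally check the embedding of $G_\Gamma$ by comparing normal forms and computing the index from the cokernel on the $E$-part.

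Your identification of the one genuinely nontrivial point is also accurate: uniqueness of the normal form (equivalently, that the group does not collapse and has Hirsch length $|V|+|E|$) is the only step requiring care. Any of the routes you sketch works. Perhaps the cleanest is to note that $G_{\Gamma(k)}$ is visibly a central extension of the free abelian group on $V\cup E$ modulo the relations $[v,w]=e^{k(e)}$, and to write down the $2$-cocycle on $\Z^{|V|}$ with values in $\Z^{|E|}$ explicitly; alternatively, observe that the defining relations form a consistent polycyclic presentation (all overlaps/consistency checks are trivial because the $e_j$ are central), so the collection process yields unique normal forms by the standard theory. Once uniqueness is in hand, everything else in your outline is indeed bookkeeping, and your arguments for $Z(G_{\Gamma(k)})$, $\gamma_2$, $\sqrt{\gamma_2}$, the two quotients, and the embedding $\iota$ with its index are all correct. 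One tiny remark for the index step: you implicitly use that $\langle V\cup E^k\rangle$ is normal, which holds because it contains $\gamma_2(G_{\Gamma(k)})=\langle E^k\rangle$; it may be worth saying this explicitly.
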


Note that $\gamma_2(G_{\Gamma(k)})\subset Z(G_{\Gamma(k)})$ and thus $G_{\Gamma(k)}$ is a (finitely generated torsion-free) 2-step nilpotent group. In section \ref{sec:LieAlgAssToGrps} we associated two Lie algebras to these kinds of groups. As it turns out, in this case, they are isomorphic and do not depend on the chosen weight function.



\begin{lemma}\label{lem:AllLieAlgebrasIso}
	For any finite undirected simple graph $\Gamma=(V,E)$ and a weight function $k:E\to \N_0$ on its edges, the Lie algebras $L^{\Q}(\Gamma,2)$, $L^{\Q}(G_{\Gamma})$, $L^{\Q}(G_{\Gamma(k)})$, $M(G_\Gamma)$ and $M(G_{\Gamma(k)})$ are all isomorphic and the isomorphisms are induced by the identity on the vertices.
\end{lemma}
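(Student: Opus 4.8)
The plan is to reduce everything to the graded Lie algebra $L^\Q(\Gamma,2)$, via the following scheme. The starting observation is that any finite-dimensional $2$-step nilpotent Lie algebra $L$ is isomorphic to its associated graded $\gr(L)$: choose a vector space complement $W$ of $\gamma_2(L)$ in $L$, note that $[L,L]=\gamma_2(L)$ is central so the bracket factors through $W\times W\to\gamma_2(L)$, and observe that $L$ with $W$ placed in degree $1$ and $\gamma_2(L)$ in degree $2$ is exactly $\gr(L)$. Composing with the isomorphism $L^\Q(G)\cong\gr(M(G))$ of \eqref{eq:isoMalcevGraded}, this gives $M(G)\cong L^\Q(G)$ for every finitely generated torsion-free $2$-step nilpotent $G$; and for $G=G_\Gamma$ or $G=G_{\Gamma(k)}$ one may, by the uniqueness of expression in Lemma~\ref{lem:structure GGammak}, take $W$ to contain all the vertex elements $v\in G\subseteq M(G)$, so that the resulting isomorphism $M(G)\to L^\Q(G)$ sends each vertex $v$ to its class $v\gamma_2(G)$ in $L_1^\Q(G)$. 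Granting this, it suffices to produce isomorphisms, each induced by the identity on the vertices, between the three graded Lie algebras $L^\Q(\Gamma,2)$, $L^\Q(G_\Gamma)$ and $L^\Q(G_{\Gamma(k)})$.

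For $L^\Q(G_\Gamma)\cong L^\Q(\Gamma,2)$ there is nothing to do beyond the definitions and the classical isomorphism: by definition $G_\Gamma=A(\Gamma,2)=A(\Gamma)/\gamma_3(A(\Gamma))$, and for any group $H$ the Lie ring of $H/\gamma_{c+1}(H)$ coincides with $L(H)/\gamma_{c+1}(L(H))$, since both have the same lower central factors in degrees $\le c$ and vanish in higher degrees. Hence $L^\Q(G_\Gamma)\cong L^\Q(A(\Gamma))/\gamma_3(L^\Q(A(\Gamma)))$, which by \eqref{eq:isomorphismRAAGLieAlgGrp} (which is determined by the identity on $V$) is isomorphic to $L^\Q(\Gamma)/\gamma_3(L^\Q(\Gamma))=L^\Q(\Gamma,2)$, all via the identity on the vertices.

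For $L^\Q(G_{\Gamma(k)})\cong L^\Q(\Gamma,2)$ — which will in particular exhibit the Lie algebra as insensitive to $k$ — I would read the structure directly off Lemma~\ref{lem:structure GGammak}: there $\gamma_2(G_{\Gamma(k)})$ is free abelian of rank $|E|$ with basis $\{e^{k(e)}:e\in E\}$, and $G_{\Gamma(k)}/\gamma_2(G_{\Gamma(k)})$ is the direct product of a copy of $\Z^{|V|}$ spanned by the vertex images with a finite group. Tensoring with $\Q$ kills the finite part, so $L_1^\Q(G_{\Gamma(k)})$ has basis $\{\overline v:v\in V\}$, while the relation $[v,w]=e^{k(e)}$ identifies the class of $e^{k(e)}$ with the Lie bracket $[\overline v,\overline w]$; thus $L_2^\Q(G_{\Gamma(k)})$ has basis $\{[\overline v,\overline w]:\{v,w\}\in E\}$, and $[\overline v,\overline w]=0$ for non-edges. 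This is precisely the presentation of $L^\Q(\Gamma,2)$, so $\overline v\mapsto v$ extends to a graded isomorphism. (Alternatively, by Lemma~\ref{lem:structure GGammak} the identity on $V$ realizes $G_\Gamma$ as a finite-index subgroup of $G_{\Gamma(k)}$, whence $M(G_\Gamma)\cong M(G_{\Gamma(k)})$ compatibly with the identity on $V$, and one transports this across \eqref{eq:isoMalcevGraded}.)

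Finally I would chain the isomorphisms together. Each of the five Lie algebras is generated, as a Lie algebra, by the images of the vertices — for the three $L^\Q$'s this is Remark~\ref{rem:RestrictionToAbelianization}, and for $M(G_\Gamma)$, $M(G_{\Gamma(k)})$ it holds because $\gamma_2$ is spanned by brackets of vertices (indeed $k(e)\cdot e=[v,w]$ in $M(G_{\Gamma(k)})$ when $e=\{v,w\}$) — so a Lie algebra isomorphism between any two of them fixing the vertices is unique. Composing the maps above then yields the mutually compatible isomorphisms, all induced by the identity on the vertices. I expect the only genuinely delicate point to be the bookkeeping: keeping straight the three incarnations of a vertex $v$ — as an element of $\Sp_\Q(V)\subseteq L^\Q(\Gamma,2)$, as a class in $L_1^\Q(G)=\gamma_1(G)/\gamma_2(G)\otimes\Q$, and as the group element $v\in G\subseteq G^\Q=M(G)$ — and checking that each of the isomorphisms above really is the identity on the vertices under these identifications, which is exactly what the choice of complement $W$ in the first step and the explicit basis of $L^\Q(G_{\Gamma(k)})$ in the third step are there to ensure.
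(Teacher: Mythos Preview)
Your proof is correct and follows essentially the same strategy as the paper: both use that a $2$-step nilpotent Lie algebra is isomorphic to its associated graded (with the complement chosen to contain the vertices), combine this with \eqref{eq:isoMalcevGraded} to get $M(G)\cong L^\Q(G)$, and invoke \eqref{eq:isomorphismRAAGLieAlgGrp} for $L^\Q(\Gamma,2)\cong L^\Q(G_\Gamma)$. The only minor difference is in how $G_{\Gamma(k)}$ is linked to the rest: the paper passes through the finite-index inclusion $G_\Gamma\hookrightarrow G_{\Gamma(k)}$ of Lemma~\ref{lem:structure GGammak} to get $M(G_\Gamma)\cong M(G_{\Gamma(k)})$, whereas your primary argument computes $L^\Q(G_{\Gamma(k)})$ directly from the presentation and matches it to $L^\Q(\Gamma,2)$ --- but you already list the finite-index route as your alternative, so the two proofs are effectively the same.
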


\begin{proof}
    Note that any 2-step nilpotent rational Lie algebra $L$ is isomorphic to $\gr(L)$ and that the isomorphism is canonical up to a choice of vector space complement to $[L, L]$. Since for $M(G_\Gamma)$ and $M(G_{\Gamma(k)})$, such a complement is naturally given by the span of the vertices, one gets (after combining with Equation (\ref{eq:isoMalcevGraded})) the isomorphisms $M(G_\Gamma) \cong \gr(M(G_\Gamma)) \cong L^\Q(G_\Gamma)$ and $M(G_{\Gamma(k)}) \cong \gr(M(G_{\Gamma(k)})) \cong L^\Q(G_{\Gamma(k)})$. As one can check, all these isomorphisms restrict to the identity on the vertices.

    Next, since the group $G_\Gamma$ is a finite index subgroup of $G_{\Gamma(k)}$, it follows (see section \ref{prop:isoGradLieAlgImplicationRinfty}) that the associated Mal'cev Lie algebras are isomorphic and that the isomorphism restricts to the inclusion of $G_\Gamma$ in $G_{\Gamma(k)}$ and thus to the identity on the vertices.

    At last, using Equation (\ref{eq:isomorphismRAAGLieAlgGrp}), we find that $L^\Q(\Gamma, 2) \cong L^\Q(G_\Gamma)$ where the isomorphism restricts to the identity on the vertices. This completes the proof.
\end{proof}

\newpage
\section{Automorphisms of \texorpdfstring{$G_{\Gamma(k)}$}{G_Gamma(k)}}
\label{sec:automorpismsOfWeightedRAAG}

In this section, we describe (to some extent) the automorphisms of $G_{\Gamma(k)}$. In the first part (section \ref{sec:AutLieAlg}), we recall a result from \cite{dm21-1}, where the automorphisms of $L^K(\Gamma, 2)$ are described for any field $K\subset \C$. Since $L^\Q(\Gamma, 2) \cong L^\Q(G_{\Gamma(k)})$ for any weight function $k$ (see Lemma \ref{lem:AllLieAlgebrasIso}), this result gives us information about the automorphisms of $G_{\Gamma(k)}$ (recall that any automorphism of $G_{\Gamma(k)}$ induces an automorphism on $L^\Q(G_{\Gamma(k)})$).
In the second part (section \ref{sec:AutGrp}), we argue on the group level to get more restrictions on the possible automorphisms. These restrictions are imposed by the weight function $k:E \to \N_0$.

\subsection{Automorphisms of \texorpdfstring{$L^K(\Gamma, 2)$}{L^K(Gamma,2)}}
\label{sec:AutLieAlg}
In order to describe the automorphisms of $L^K(\Gamma, 2)$, one needs some extra maps and notation:

Recall the definition from the quotient graph $\olGamma=(\Lambda,\olE,\Psi)$ of $\Gamma=(V,E)$. We say that $\sigma\in \Aut(\olGamma)$ is an \textit{automorphism of the quotient graph} if it is a bijection $\sigma:\Lambda\to \Lambda$ such that edges are mapped to edges (i.e. $\sigma(\olE)=\olE$) and the sizes of the coherent components are invariant (i.e. $\Psi\circ \sigma=\Psi$). Note that any graph automorphism $\sigma\in \Aut(\Gamma)$ induces a unique automorphism $p(\sigma)\in \Aut(\olGamma)$ by setting $p(\sigma)([v]):=[\sigma(v)]$ for any vertex $v\in V$. This gives a surjective group homomorphism $p: \Aut(\Gamma) \to \Aut(\olGamma)$.
    

For any basis $X$ of a vector space $W$ over $K$ and any permutation $\sigma$ on the basis $X$, we write $P(\sigma)$ for the linear map on $W$ (or for the corresponding matrix) defined by $P(\sigma)(x):=\sigma(x)$ (for all $x\in X$).

We define the linear endomorphisms $E_{vw}$ (for all vertices $v,w\in V$) on $\Sp_K(V)$ (for any $v'\in V$) by
\[ E_{vw}(v')=\begin{cases}
	v & \text{if } v'=w\\
	0 & \text{if } v'\neq w
\end{cases}. \]

Denote with $I\in \Gl(\Sp_K(V))$ the identity map and with $\mathcal{U}\subseteq \Gl(\Sp_K(V))$ the subgroup defined by
\[ \mathcal{U}:= \langle I+t E_{vw}\:\mid\:v\prec w,\: w\not\prec v,\: t\in K\rangle.\]

We use $\Gl(\Sp_K(\lambda))$ (for any $\lambda\in \Lambda$) to denote the subgroup of $\Gl(\Sp_K(V))$ where $f\in\Gl(\Sp_K(\lambda))$ induces a linear map (of $\Gl(\Sp_K(V))$) by setting (for any $v\in V$)
\[ v\mapsto\begin{cases}
	f(v) & \text{if } v\in \lambda\\
	v & \text{if } v\not\in \lambda
\end{cases}. \]

We have now introduced enough notation to describe the graded automorphisms of $L^K(\Gamma, 2)$ (with $K\subset \C$). Recall Remark \ref{rem:RestrictionToAbelianization}.
\begin{theorem}[\cite{dm21-1}]\label{thm:GradedAutomorphismsLieAlgebra}
	Let $K\subset \C$ be a field. Then the image of $\pi:\Aut_g(L^K(\Gamma, 2))\to \Gl(\Sp_K(V))$ is a linear algebraic group given by
	\[ \mathcal{G}_\Gamma := P(\Aut(\Gamma))\cdot \biggg(\prod_{\lambda\in \Lambda}\Gl(\Sp_K(\lambda))\biggg)\cdot \mathcal{U} \]
	where $\mathcal{U}$ is its unipotent radical. In particular, the matrix of any element $f\in \mathcal{G}_\Gamma$ with respect to the basis $(V,<)$ has the form (where the blocks correspond with the coherent components $\lambda_1<\dots<\lambda_r$)
	\[ P(\sigma)\cdot\begin{pmatrix}
		A_1 & A_{12} & \hdots & A_{1r} \\
		0 & A_2 & \ddots & \vdots \\
		\vdots & \ddots & \ddots & A_{r-1 r} \\
		0 & \hdots & 0 & A_r
	\end{pmatrix} \]
	where $\sigma\in \Aut(\Gamma)$, $A_i\in \Gl_{\vert \lambda_i\vert }(K)$, $A_{ij}\in K^{\vert \lambda_i\vert \times \vert \lambda_j\vert }$ and $A_{ij}=0$ if $\lambda_i\not\olprec\lambda_j$. The permutation $\sigma$ is not necessarily unique, but the induced permutation $p(\sigma) \in \Aut(\olGamma)$ is unique. Moreover, the assignment $f \mapsto p(\sigma)$ is a group homomorphism from $\mathcal{G}_\Gamma$ to $\Aut(\overline{\Gamma})$.
\end{theorem}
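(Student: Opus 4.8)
\emph{Setup.} The plan is first to reformulate the statement as an assertion about linear maps on $\Sp_K(V)$, then to pin down the identity component and unipotent radical of the image by a Lie-algebra computation, and finally to realise the component group by graph automorphisms --- the last step being the crux. Since $L^K(\Gamma,2)$ is generated by its degree-one part $\Sp_K(V)$, Remark~\ref{rem:RestrictionToAbelianization} reduces matters to $\Gl(\Sp_K(V))$: a linear $f\in\Gl(\Sp_K(V))$ extends (via $[v,w]\mapsto[f(v),f(w)]$) to a graded Lie algebra endomorphism of $L^K(\Gamma,2)$ precisely when $[f(v),f(w)]=0$ in $L_2:=L_2^K(\Gamma,2)$ for every non-edge $\{v,w\}\notin E$, and such an endomorphism is then automatically bijective because $[\Sp_K(V),\Sp_K(V)]=L_2$ and $\dim_K L^K(\Gamma,2)<\infty$. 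Thus
\[\Im(\pi)=\{\,f\in\Gl(\Sp_K(V))\mid [f(v),f(w)]=0\text{ in }L_2\ \text{for all}\ \{v,w\}\notin E\,\},\]
cut out in $\Gl(\Sp_K(V))$ by the (quadratic) equations given by the coordinates of $[f(v),f(w)]$ in the basis $\{[v,w]\mid\{v,w\}\in E\}$ of $L_2$; hence $\Im(\pi)$ is a linear algebraic group. Two combinatorial observations I would record up front: every coherent component $\lambda$ is a clique or an independent set (because $v\sim w$ is equivalent to $(v\,w)\in\Aut(\Gamma)$), and the condition ``$v\prec w$, $w\not\prec v$'' amounts to $\Omega'(v)\subseteq\Omega(w)$ together with $[v]\neq[w]$.

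\emph{The easy inclusion and the identity component.} Next I would show $\mathcal{G}_\Gamma\subseteq\Im(\pi)$; since $\Im(\pi)$ is a subgroup it suffices to check the generators. For $\sigma\in\Aut(\Gamma)$ this is clear, as $P(\sigma)$ sends non-edges to non-edges. For $f\in\Gl(\Sp_K(\lambda))$ and a non-edge $\{v,w\}$ the only interesting case is $v\in\lambda$, $w\notin\lambda$, where $f(w)=w$ and $f(v)$ is a combination of vertices $u\in\lambda$, each with $\{u,w\}\notin E$ (apply $(u\,v)\in\Aut(\Gamma)$), so the bracket vanishes; for $I+tE_{vw}$ with $v\prec w$, $w\not\prec v$ only the image of $w$ moves, and for a non-edge $\{w,u\}$ one has $u\notin\Omega(w)\supseteq\Omega'(v)$, so again $[w+tv,u]=0$. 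The same computations show that $\prod_\lambda\Gl(\Sp_K(\lambda))$ and $P(\Aut(\Gamma))$ normalise $\mathcal{U}$ and that $\prod_\lambda\Gl(\Sp_K(\lambda))\cap\mathcal{U}=\{I\}$, so $H:=\big(\prod_\lambda\Gl(\Sp_K(\lambda))\big)\ltimes\mathcal{U}$ is a connected subgroup and $\mathcal{G}_\Gamma=P(\Aut(\Gamma))\cdot H\subseteq\Im(\pi)$. Then I would compute $\operatorname{Lie}(\Im(\pi))$ by linearising the defining equations at $I$: it is the space of degree-zero derivations of $L^K(\Gamma,2)$, i.e.\ the $\delta\in\End(\Sp_K(V))$ with $[\delta v,w]+[v,\delta w]=0$ for all non-edges $\{v,w\}$. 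For a fixed non-edge the vectors $\{[a,w]\mid a\in\Omega'(w)\}\cup\{[v,b]\mid b\in\Omega'(v)\}$ are distinct members of the basis of $L_2$, so the equation forces $\delta_{av}=0$ unless $\Omega'(a)\subseteq\Omega(v)$, i.e.\ unless $a\prec v$; conversely each $E_{av}$ with $a\prec v$ satisfies it. Hence $\operatorname{Lie}(\Im(\pi))=\Sp_K\{E_{av}\mid a\prec v\}=\operatorname{Lie}(H)$, so (characteristic $0$) $\Im(\pi)^{\circ}=H$, and as $H/\mathcal{U}\cong\prod_\lambda\Gl(\Sp_K(\lambda))$ is reductive while $\mathcal{U}\lhd H$ is connected and unipotent, $R_u(\Im(\pi))=\mathcal{U}$. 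At this point the theorem is reduced to the set equality $\Im(\pi)=\mathcal{G}_\Gamma=P(\Aut(\Gamma))\cdot H$: I must show every $f\in\Im(\pi)$ can be written $P(\sigma)g$ with $\sigma\in\Aut(\Gamma)$ and $g\in H$, equivalently that $P(\sigma)^{-1}f$ becomes block upper triangular for the order $\lambda_1<\dots<\lambda_r$ (the $(i,j)$-block vanishing when $\lambda_i\not\olprec\lambda_j$).

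\emph{Producing $\sigma$, and the addenda --- the hard part.} The construction of $\sigma$ is where I expect the real work to lie. I would argue by induction on $|\Lambda|$, analysing how $f$ moves an extremal coherent component: the non-edge relations together with the invertibility of $f$ should force $f$, after composing with a suitable $P(\sigma_0)$, to carry the span of such a component onto the span of a component of the same size and with matching $\olGamma$-adjacencies, after which one peels it off and applies the inductive hypothesis to the complementary graph, assembling $\sigma$ and an induced automorphism of $\olGamma$ along the way. Granting this, with $g:=P(\sigma)^{-1}f$ block upper triangular, its diagonal blocks are invertible and give $D\in\prod_\lambda\Gl(\Sp_K(\lambda))$; then $D^{-1}g$ is block upper unitriangular, its nonzero off-diagonal entries occupying only positions $(u,v)$ with $u\prec v$, $u\not\sim v$, and since transitivity of $\prec$ keeps this set of positions stable under Gaussian elimination, $D^{-1}g$ is a product of elementaries $I+tE_{uv}$ and so lies in $\mathcal{U}$; hence $f=P(\sigma)\,D\,(D^{-1}g)\in\mathcal{G}_\Gamma$. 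For the uniqueness of $p(\sigma)$: if $P(\sigma)DU=P(\sigma')D'U'$, then $P(\sigma)^{-1}P(\sigma')=DU(U')^{-1}(D')^{-1}$ is simultaneously a permutation matrix permuting coherent components (as $\sigma^{-1}\sigma'\in\Aut(\Gamma)$) and block upper triangular, so the induced permutation of $\Lambda$ takes each component $\olprec$-below itself and must be the identity; thus $p(\sigma)=p(\sigma')$, and the residual ambiguity in $\sigma$ is exactly composition with within-component transpositions, absorbed into the $\prod_\lambda\Gl(\Sp_K(\lambda))$-factor. Finally, for $f=P(\sigma_f)g_f$ and $g=P(\sigma_g)g_g$ with $g_f,g_g\in H$, sliding $P(\sigma_g)$ to the left past $g_f$ (which conjugates $g_f$ back into $H$, since $P(\Aut(\Gamma))$ normalises $H$) yields $fg=P(\sigma_f\sigma_g)\cdot(\text{element of }H)$, whence by the uniqueness just established the permutation invariant of $fg$ equals $p(\sigma_f\sigma_g)=p(\sigma_f)p(\sigma_g)$, so $f\mapsto p(\sigma)$ is a homomorphism $\mathcal{G}_\Gamma\to\Aut(\olGamma)$. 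In summary, everything except the inductive production of $\sigma$ is routine bookkeeping with the preorder $\prec$ and standard linear-algebraic-group theory, and that inductive step is the main obstacle.
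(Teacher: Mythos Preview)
The paper does not prove Theorem~\ref{thm:GradedAutomorphismsLieAlgebra}: it is quoted verbatim from \cite{dm21-1} and used as a black box (note the citation in the theorem header and the absence of any proof environment following it). So there is no ``paper's own proof'' to compare your attempt against.

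That said, let me comment on your proposal on its own terms. The overall architecture is sound: your description of $\Im(\pi)$ as the vanishing locus of the non-edge bracket conditions is correct, the verification that $P(\Aut(\Gamma))$, $\prod_\lambda\Gl(\Sp_K(\lambda))$ and $\mathcal U$ all lie in $\Im(\pi)$ is fine, and the derivation computation pinning down $\operatorname{Lie}(\Im(\pi))=\Sp_K\{E_{av}\mid a\prec v\}$ is correct (your distinctness argument for the basis vectors $[a,w]$ and $[v,b]$ goes through because $\{v,w\}\notin E$ rules out any coincidence). The uniqueness of $p(\sigma)$ and the homomorphism property are handled cleanly.

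However, you yourself flag the real gap: the production of $\sigma\in\Aut(\Gamma)$ from an arbitrary $f\in\Im(\pi)$. Your sketch (``analyse how $f$ moves an extremal coherent component \ldots\ peel it off and induct'') is only a plan, not an argument, and this is precisely the step that carries all the combinatorial content of the theorem. One has to show that $f$ induces a well-defined permutation of $\Lambda$ respecting both $\olprec$ and the edge relation on $\olGamma$, and that this permutation lifts to a genuine graph automorphism of $\Gamma$; neither of these is automatic from invertibility and the non-edge conditions alone. In \cite{dm21-1} this is done by a careful analysis of how the bracket relations constrain the support of $f(v)$ relative to the preorder, and it is not a one-line induction. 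So as written, your proposal is an outline with the crux missing, which you honestly acknowledge; the remaining bookkeeping is indeed routine once that step is supplied.
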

Recall that any automorphism $\varphi\in \Aut(G_{\Gamma(k)})$ induces an automorphism $\olvarphi\in \Aut_g(L^\Q(G_{\Gamma(k)})) \cong \Aut_g(L^\Q(\Gamma, 2))$. Hence, by Theorem \ref{thm:GradedAutomorphismsLieAlgebra} it induces a (unique) automorphism $p(\sigma) \in \Aut(\olGamma)$. In the next section, we study which automorphisms of $\olGamma$ can arise in this way. Later, we use this (together with Lemma \ref{lem:eigenvalue1}) to derive some restrictions on an edge-weighted graph (with weighted edges) such that the associated group has the $R_{\infty}$--property.

\subsection{Automorphisms of \texorpdfstring{$G_{\Gamma(k)}$}{G_Gamma(k)}}
\label{sec:AutGrp}
\begin{notation}
	Let $(B=\{b_1,\dots,b_n\},<)$ be a finite totally ordered set (with $b_1<\dots<b_n$) and $\theta:B\to \Z$ a map. Then we define $D(\theta)\in \Z^{n\times n}$ to be the diagonal matrix with the image of $\theta$ (ordered by $<$ on $B$) on its diagonal, i.e.
	\[ D(\theta):=\text{diag}(\theta(b_1),\dots,\theta(b_n))=\begin{pmatrix}
		\theta(b_1) & 0 & \hdots & 0 \\
		0 & \theta(b_2) & \ddots & \vdots \\
		\vdots & \ddots & \ddots & 0 \\
		0 & \hdots & 0 & \theta(b_n)
	\end{pmatrix}.\]
	If $B$ is a basis of a $\Z$-module $M$, then we will sometimes abuse notation and denote with $D(\theta)$ also the linear map on $M$ induced by $D(\theta)(b_i):=\theta(b_i)b_i$ (for all $i=1,\dots,n$).
\end{notation}
\begin{lemma}\label{lem:AutomorphismOnSqrtGamma2}
	Fix a finite undirected simple graph $\Gamma=(V,E)$ with weights $k:E\to \N_0$ on its edges. Let $\varphi\in \Aut(G_{\Gamma(k)})$ and denote with $A\in \Gl_{\vert E\vert }(\Z)$ the matrix of $\varphi\vert _{\gamma_2(G_{\Gamma(k)})}$ with respect to the $\Z$-basis $(E^k,<)$.\\
	Then the matrix of $\varphi\vert _{\sqrt{\gamma_2(G_{\Gamma(k)})}}$ with respect to $(E,<)$ is given by $D(k)AD(k)^{-1}$. In particular, $D(k)AD(k)^{-1}\in \Gl_{\vert E\vert }(\Z)$.
\end{lemma}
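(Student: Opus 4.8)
\textit{Proof proposal.} The plan is to exploit the inclusion $\iota\colon \gamma_2(G_{\Gamma(k)}) \hookrightarrow \sqrt{\gamma_2(G_{\Gamma(k)})}$, which intertwines the two restrictions of $\varphi$ because both subgroups are characteristic in $G_{\Gamma(k)}$: the former as a term of the lower central series, the latter as an isolator inside a finitely generated torsion-free nilpotent group. By Lemma \ref{lem:structure GGammak} we identify $\gamma_2(G_{\Gamma(k)})$ with $\Z^{|E|}$ via the ordered basis $(E^k,<)$ and $\sqrt{\gamma_2(G_{\Gamma(k)})}$ with $\Z^{|E|}$ via $(E,<)$; under these identifications $\varphi$ restricts to $A \in \Gl_{|E|}(\Z)$ on the first group and to some $B \in \Gl_{|E|}(\Z)$ on the second, and $B$ is exactly the matrix we must show equals $D(k)AD(k)^{-1}$.

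First I would compute the matrix of $\iota$ with respect to these bases. The bijection $E \to E^k\colon e \mapsto e^{k(e)}$ is order preserving (the order on $E^k$ being the transported one), so the $i$-th basis vector $e_i^{k(e_i)}$ of $\gamma_2(G_{\Gamma(k)})$ is sent by $\iota$ to $e_i^{k(e_i)} = (e_i)^{k(e_i)}$, i.e.\ to $k(e_i)$ times the $i$-th basis vector $e_i$ of $\sqrt{\gamma_2(G_{\Gamma(k)})}$. Hence the matrix of $\iota$ is the diagonal matrix $D(k)$.

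Next I would write down the $\varphi$-equivariance of $\iota$, namely $\varphi|_{\sqrt{\gamma_2(G_{\Gamma(k)})}} \circ \iota = \iota \circ \varphi|_{\gamma_2(G_{\Gamma(k)})}$, which holds trivially since $\iota$ is merely the inclusion of one subgroup into another and $\varphi$ is a single automorphism of $G_{\Gamma(k)}$. Translating into matrices (composition becoming matrix multiplication, a composite's matrix splitting at the intermediate basis) this reads
\[ B\, D(k) = D(k)\, A, \]
and since $D(k)$ is invertible over $\Q$ we obtain $B = D(k)AD(k)^{-1}$. As $B$ is the matrix of an automorphism of the free abelian group $\sqrt{\gamma_2(G_{\Gamma(k)})} \cong \Z^{|E|}$, it lies in $\Gl_{|E|}(\Z)$, which is the final assertion.

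No step is genuinely difficult; the only real care needed is to keep the matrix conventions (left versus right action, and the order of the factors in a composite) consistent throughout, and to confirm that the total order $<$ on $E$ transports to a well-defined order on $E^k$ so that the diagonal of $D(k)$ is unambiguous — both of which are already implicit in the set-up of Lemma \ref{lem:structure GGammak} and the notation $D(\cdot)$.
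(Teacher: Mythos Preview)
Your proof is correct and follows essentially the same approach as the paper: use the characteristic inclusion $\iota\colon\gamma_2(G_{\Gamma(k)})\hookrightarrow\sqrt{\gamma_2(G_{\Gamma(k)})}$, compute its matrix as $D(k)$ with respect to the bases $(E^k,<)$ and $(E,<)$, and read off $B\,D(k)=D(k)\,A$ from the commuting square, inverting $D(k)$ over $\Q$. The paper's argument is identical in structure and content.
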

\begin{proof}
	Since the induced automorphisms on $\gamma_2(G_{\Gamma(k)})$ and $\sqrt{\gamma_2(G_{\Gamma(k)})}$ are restrictions of $\varphi$, we obtain the following commuting diagram.
	\[ \begin{tikzcd}
		\gamma_2(G_{\Gamma(k)}) \arrow[r, "\varphi\vert_{\gamma_2}"] \arrow[d, hook, "\iota"] & \gamma_2(G_{\Gamma(k)}) \arrow[mysymbol]{dl}[description]{\huge{\circlearrowleft}} \arrow[d, hook, "\iota"] \\
		\sqrt{\gamma_2(G_{\Gamma(k)})} \arrow[r, "\varphi\vert_{\sqrt{\gamma_2}}"']  & \sqrt{\gamma_2(G_{\Gamma(k)})}
	\end{tikzcd} \]
	Recall that $\gamma_2(G_{\Gamma(k)})$ and $\sqrt{\gamma_2(G_{\Gamma(k)})}$ are both free abelian of rank $\vert E\vert $. By Lemma \ref{lem:structure GGammak}, the sets $E$ and $E^k = \{e^{k(e)}\mid e\in E\}$ ordered with the total order $<$ give a basis for the $\Z$-modules $\sqrt{\gamma_2(G_{\Gamma(k)})}$ and $\gamma_2(G_{\Gamma(k)})$, respectively. Hence, by using these bases we get the commuting diagram of matrices
	\[ \begin{tikzcd}
		\Z^{\vert  E\vert  } \arrow[r, "A"] \arrow[d, "D"'] & \Z^{\vert  E\vert } \arrow[mysymbol]{dl}[description]{\huge{\circlearrowleft}} \arrow[d, "D"] \\
		\Z^{\vert  E\vert } \arrow[r, "B"']  & \Z^{\vert  E\vert }
	\end{tikzcd} \]
	where $A, B\in \Gl_{\vert E\vert }(\Z)$ are the matrix representations of $\varphi\vert_{\gamma_2(G_{\Gamma(k)})}$ and $\varphi|_{\sqrt{\gamma_2(G_{\Gamma(k)})}}$, respectively, and $D\in \Z^{\vert E\vert \times \vert E\vert}$ is the matrix representation of the inclusion $\iota$. Note that $D=D(k)$ since a basis vector $e^{k(e)}$ of $\gamma_2(G_{\Gamma(k)})$ is sent to the basis vector $e$ of $\sqrt{\gamma_2(G_{\Gamma(k)})}$ but raised to the power $k(e)$. Thus the commutative diagram implies that $BD(k)=D(k)A$. Since $k(e)\neq 0$ (for all $e\in E$), these matrices are invertible over $\Q$. Over $\Q$, we indeed obtain that $B=D(k)AD(k)^{-1}$.
\end{proof}
\begin{remark}
	Recall that any automorphism of $G_{\Gamma(k)}$ is completely determined by the image of the vertices. Indeed, by Lemma \ref{lem:AutomorphismOnSqrtGamma2} the image of $E$ is fixed by what happens on $\gamma_2(G_{\Gamma(k)})$ which on its turn is determined by the image of the vertices. Since $V$ and $E$ generate $G_{\Gamma(k)}$ the claim follows. Which matrices $A\in \Gl_{\vert V\vert }(\Z)$ on the quotient of $G_{\Gamma(k)}$ with $\sqrt{\gamma_2(G_{\Gamma(k)})}$ induce an automorphism on $G_{\Gamma(k)}$? One can prove that it suffices to require that it induces an automorphism of $G_{\Gamma}$ and that the induced matrix on $\sqrt{\gamma_2(G_{\Gamma(k)})}$ (which we described in Lemma \ref{lem:AutomorphismOnSqrtGamma2}) has integer entries.\\
    Note however that any such matrix $A\in \Gl_{\vert V\vert }(\Z)$ does not induce a unique automorphism of $G_{\Gamma(k)}$. Indeed, one can always multiply the image of a vertex with any element of $\sqrt{\gamma_2(G_{\Gamma(k)})}$. Nevertheless, the induced automorphisms on the Lie ring $L(G_{\Gamma(k)})$ are the same and they have the same Reidemeister number. Hence, since we are interested in the Reidemeister numbers of automorphisms of $G_{\Gamma(k)}$, we can assume without loss of generality that $\varphi(\langle V\rangle)=\langle V\rangle$ for the automorphisms we work with.
\end{remark}

Using the description of the automorphisms of the Lie algebra $L^\Q(\Gamma, 2)$ of above, we can describe to some extent the automorphisms of $G_{\Gamma(k)}$ and in particular their induced maps on $\gamma_2(G_{\Gamma(k)})$. In order to do so, we need the following notation.

Let $\Gamma = (V, E)$ be a graph with an automorphism $\sigma \in \Aut(\Gamma)$. Then $\sigma$ induces a permutation $\sigma_E$ on the edge set by
\[ \sigma_E:E \to E: \{v,w\} \mapsto \{\sigma(v), \sigma(w)\}.\]
If there is a total order $<$ on the vertices $V$, we say that an edge $\{v, w\} \in E$ is an \textit{inversion of $\sigma$} if $v<w$, but $\sigma(v) > \sigma(w)$. We then define the map $\varepsilon_\sigma:E\to \{-1,1\}$ by setting for all $e\in E$
	\[ \varepsilon_\sigma(e):=\begin{cases}
		-1 &\text{if } e \text{ is an inversion of } \sigma\\
		1 &\text{otherwise}.
	\end{cases} \]
With this notation at hand, we can say the following about the automorphisms of $G_{\Gamma(k)}$. Recall from section \ref{sec:relationsOnVerticesAndEdges} that we defined a total order $<$ on both the vertices and the edges, which were a refinement of the orders on $\Lambda = \{\lambda_1,\dots,\lambda_r\}$ and $M = \{\mu_1,\dots,\mu_s\}$, respectively.
\begin{lemma}
    \label{lem:inducedAutoAbAndGamma2}
    Let $\Gamma=(V,E)$ be a finite undirected simple graph with weight function $k:E\to \N_0$ on its edges and write $G = G_{\Gamma(k)}$. For any automorphism $\varphi\in \Aut(G)$ it holds that
    \begin{enumerate}[label = (\roman*)]
        \item with respect to the ordered $\Z$-basis $(V, <)$, the induced automorphism on $G/\sqrt{\gamma_2(G)}$ is represented by a matrix of the form
        \[ P(\sigma)\cdot
        \begin{pmatrix}
		      A_1 & A_{12} & \hdots & A_{1r} \\
		      0 & A_2 & \ddots & \vdots \\
		      \vdots & \ddots & \ddots & A_{r-1 r} \\
		      0 & \hdots & 0 & A_r
	    \end{pmatrix} \]
        where $\sigma \in \Aut(\Gamma)$, $A_i\in \Gl_{\vert \lambda_i\vert }(\Z)$, $A_{ij}\in \Z^{\vert \lambda_i\vert \times \vert \lambda_j\vert }$ and $A_{ij}=0$ if $\lambda_i\not\olprec\lambda_j$.
        \item with respect to the ordered $\Z$-basis $(E^k, <)$, the induced automorphism on $\gamma_2(G)$ is represented by a matrix of the form
        \[ P(\sigma_E) \cdot D(\varepsilon_\sigma) \cdot
        \begin{pmatrix}
		      B_1 & B_{12} & \hdots & B_{1s} \\
		      0 & B_2 & \ddots & \vdots \\
		      \vdots & \ddots & \ddots & B_{s-1 s} \\
		      0 & \hdots & 0 & B_s
	    \end{pmatrix} \]
        where $B_i \in \Gl_{|\mu_i|}(\Z)$, $B_{ij} \in \Z^{|\mu_i| \times |\mu_j|}$ and $B_{ij}=0$ if $\mu_i\not\olprec\mu_j$.
    \end{enumerate}
    The permutation $\sigma \in \Aut(\Gamma)$ is not necessarily unique, but the induced permutation $p(\sigma) \in \Aut(\overline{\Gamma})$ is unique. Moreover, the assignment $\varphi \mapsto p(\sigma)$ defines a group homomorphism from $\Aut(G)$ to $\Aut(\overline{\Gamma})$.
\end{lemma}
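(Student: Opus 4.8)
The plan is to analyze an automorphism $\varphi \in \Aut(G)$ (where $G = G_{\Gamma(k)}$) by pushing it through the three characteristic subgroups $\sqrt{\gamma_2(G)} \supset \gamma_2(G)$ and the quotient $G/\sqrt{\gamma_2(G)}$, and to identify each induced map with the corresponding data on the Lie-algebra side via Lemma~\ref{lem:AllLieAlgebrasIso} and Theorem~\ref{thm:GradedAutomorphismsLieAlgebra}. For part~(i): by Lemma~\ref{lem:structure GGammak}, $(V,<)$ is a $\Z$-basis of $G/\sqrt{\gamma_2(G)}$, and the induced automorphism there is, after tensoring with $\Q$, exactly the restriction of $\olvarphi \in \Aut_g(L^\Q(G)) \cong \Aut_g(L^\Q(\Gamma,2))$ to $\Sp_\Q(V)$. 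By Theorem~\ref{thm:GradedAutomorphismsLieAlgebra} this lies in $\mathcal{G}_\Gamma$, so it has the block-upper-triangular shape $P(\sigma)\cdot(A_{ij})$ with $A_{ij}=0$ when $\lambda_i \not\olprec \lambda_j$; since the map on $G/\sqrt{\gamma_2(G)} \cong \Z^{|V|}$ is an automorphism of a free abelian group, all entries are integral and the diagonal blocks $A_i$ are in $\Gl_{|\lambda_i|}(\Z)$. This also simultaneously produces the permutation $\sigma \in \Aut(\Gamma)$, the uniqueness of $p(\sigma) \in \Aut(\olGamma)$, and the homomorphism property $\varphi \mapsto p(\sigma)$, all inherited verbatim from the last sentence of Theorem~\ref{thm:GradedAutomorphismsLieAlgebra} together with functoriality of $\varphi \mapsto \olvarphi$.

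For part~(ii), the idea is to first understand the induced map on $\sqrt{\gamma_2(G)} = \langle E \rangle$ and then transport it to $\gamma_2(G) = \langle E^k \rangle$ via Lemma~\ref{lem:AutomorphismOnSqrtGamma2}. On $\sqrt{\gamma_2(G)}$ with basis $(E,<)$: the Lie bracket gives a surjection $\Lambda^2(\Sp_\Q(V)) \to \gamma_2(L^\Q(\Gamma,2)) = \Sp_\Q(\olE) = \Sp_\Q(M)$, equivariant for $\olvarphi$, sending $v \wedge w \mapsto [v,w]$; so the action of $\olvarphi$ on $\Sp_\Q(M)$ is the one induced by $P(\sigma)\cdot(A_{ij})$ on the abelianization. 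One computes $[v,w] \mapsto [\sigma(v)+\text{(higher }\lambda\text{)},\, \sigma(w)+\text{(higher }\lambda\text{)}]$; the leading term is $\varepsilon_\sigma(\{v,w\})\cdot[\sigma(v),\sigma(w)]$ by the sign conventions defining $\varepsilon_\sigma$ (an inversion swaps the orientation of the edge, hence introduces a $-1$ relative to the chosen basis $(E,<)$), and the correction terms involve edges $e'$ with $[e'] \olprec [\{v,w\}]$ strictly — this is where one must check that $\lambda_i \olprec \lambda_j$ for the vertex classes forces $\mu_i \olprec \mu_j$ for the edge classes, using the definition of $\olprec$ on $M$ from Section~\ref{sec:relationsOnVerticesAndEdges} and Lemma~\ref{lem:equivalence class e}. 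This yields that the matrix on $\sqrt{\gamma_2(G)}$ is $P(\sigma_E)\cdot D(\varepsilon_\sigma)\cdot(\text{block upper triangular in }M)$. Finally, Lemma~\ref{lem:AutomorphismOnSqrtGamma2} says the matrix on $\gamma_2(G)$ in the basis $(E^k,<)$ is $D(k)^{-1}\cdot(\text{matrix on }\sqrt{\gamma_2(G)})\cdot D(k)$; since $D(k)$ is diagonal it commutes with the block-diagonal part up to rescaling the off-diagonal blocks (which stay integral because we already know the whole conjugate is in $\Gl_{|E|}(\Z)$), commutes with $D(\varepsilon_\sigma)$, and conjugates $P(\sigma_E)$ into $D(k\circ\sigma_E^{-1}/k)\cdot P(\sigma_E)$ — but one checks $k$ is $\sigma_E$-invariant is \emph{not} needed here; rather one absorbs the resulting diagonal factor into the $B_i$'s, which remain integral invertible since the product is. So the matrix on $\gamma_2(G)$ has exactly the claimed form $P(\sigma_E)\cdot D(\varepsilon_\sigma)\cdot(B_{ij})$.

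\textbf{Main obstacle.} The routine part is invoking Theorem~\ref{thm:GradedAutomorphismsLieAlgebra} and Lemma~\ref{lem:AutomorphismOnSqrtGamma2}; the genuinely delicate step is bookkeeping the induced action on $\Sp_\Q(M)$: one must verify (a) that the leading-order coefficient of $[\sigma(v),\sigma(w)]$ in the image of $[v,w]$ is precisely $\varepsilon_\sigma(\{v,w\})$ — i.e., that the orientation conventions in the group presentation of $G_{\Gamma(k)}$ and in the chosen basis $(E,<)$ combine to give exactly the inversion sign and nothing more — and (b) that the partial order $\olprec$ on $M$ is compatible with $\olprec$ on $\Lambda$ in the sense needed to guarantee the block-upper-triangular shape carries over from vertices to edges, so that every correction term $[e']$ with $e'$ an edge between strictly-higher coherent components indeed satisfies $[e'] \olprec [\{v,w\}]$ and contributes only to strictly-later blocks $B_{ij}$, $i<j$. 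Handling the edges with both endpoints in the \emph{same} coherent component (loops in $\olGamma$, giving the within-block $B_i$) needs a separate but easy check that such edges form $\olprec$-minimal-within-their-class data preserved by $\Gl(\Sp_\Q(\lambda))$ and $\Aut(\Gamma)$.
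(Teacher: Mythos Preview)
Your part~(i) is essentially identical to the paper's argument: pass to $L^\Q(\Gamma,2)$, invoke Theorem~\ref{thm:GradedAutomorphismsLieAlgebra} for the block shape over $\Q$, and then pull integrality from the fact that the matrix already represents an automorphism of the free abelian group $G/\sqrt{\gamma_2(G)}$. The uniqueness of $p(\sigma)$ and the homomorphism property are also handled the same way.

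For part~(ii), however, your route through $\sqrt{\gamma_2(G)}$ and Lemma~\ref{lem:AutomorphismOnSqrtGamma2} is both unnecessary and rests on a misidentification. The Lie bracket $[v,w]$ in $L^\Q(\Gamma,2)\cong L^\Q(G)$ corresponds, under the isomorphism of Lemma~\ref{lem:AllLieAlgebrasIso}, to the group commutator $[v,w]=e^{k(e)}\in\gamma_2(G)$, \emph{not} to $e\in\sqrt{\gamma_2(G)}$. So when you compute the action of $\olvarphi$ on the basis $\{[v,w]\}$, you are already computing the matrix of $\varphi|_{\gamma_2(G)}$ with respect to $(E^k,<)$ --- the very matrix the lemma asks for. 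Conjugating afterwards by $D(k)^{\pm1}$ via Lemma~\ref{lem:AutomorphismOnSqrtGamma2} would take you away from the target, not toward it. Your final claim survives only because both the matrix on $\gamma_2(G)$ and the one on $\sqrt{\gamma_2(G)}$ happen to have the same block shape $P(\sigma_E)D(\varepsilon_\sigma)\cdot(\text{block upper triangular})$, so the confusion is harmless for the statement but not for the argument.

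The paper's approach is cleaner: it works directly with the basis $(E^k,<)$ of $\gamma_2(G)$ and, crucially, it \emph{factors} the computation. Having written $\olvarphi|_{\Sp_\Q(V)}=P(\sigma)\cdot U$, one uses functoriality of the bracket to write the induced map on $\gamma_2(L^\Q(\Gamma,2))$ as the product of the map induced by $P(\sigma)$ and the map induced by $U$. The first sends $e^{k(e)}=[v,w]$ to $[\sigma(v),\sigma(w)]=\varepsilon_\sigma(e)\,\sigma_E(e)^{k(\sigma_E(e))}$, giving exactly $P(\sigma_E)D(\varepsilon_\sigma)$. For the second, since $Uv=\sum_{v'\prec v}a_{v'}v'$ and $Uw=\sum_{w'\prec w}b_{w'}w'$, one gets $U[v,w]=\sum a_{v'}b_{w'}[v',w']$, and the surviving brackets are edges $e'=\{v',w'\}$ with $e'\prec e$ \emph{by definition} of the preorder on $E$; this immediately gives the block upper triangular shape with respect to $(M,\olprec)$ without needing a separate compatibility check between $\olprec$ on $\Lambda$ and on $M$. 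Integrality of the whole product in $\Gl_{|E|}(\Z)$ then forces each diagonal block $B_i\in\Gl_{|\mu_i|}(\Z)$. Lemma~\ref{lem:AutomorphismOnSqrtGamma2} is not used at all in this proof.

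Two smaller points: you write $\gamma_2(L^\Q(\Gamma,2))=\Sp_\Q(\olE)=\Sp_\Q(M)$, but its dimension is $|E|$, not $|M|$; and your description of the ``leading term'' as $\varepsilon_\sigma(e)\,[\sigma(v),\sigma(w)]$ is not quite right when the diagonal block $A_i$ is nontrivial --- the leading \emph{block} is $P(\sigma_E)D(\varepsilon_\sigma)B_i$ applied to $[v,w]$, not a scalar multiple of a single basis vector. The paper's factored computation sidesteps this entirely.
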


\begin{proof}
    Fix $\varphi\in \Aut(G)$ and denote with $\olvarphi\in \Aut_g(L^\Q(\Gamma,2))$ the induced graded automorphism. By Theorem \ref{thm:GradedAutomorphismsLieAlgebra} there exist $A_i\in \Gl_{\vert \lambda_i\vert}(\Q)$, $A_{ij}\in \Q^{\vert \lambda_i\vert \times \vert \lambda_j\vert}$ and $\sigma\in \Aut(\Gamma)$ (with $p(\sigma)\in \Aut(\olGamma)$ unique) such that the matrix of $\olvarphi\vert_{\Sp_\Q(V)}$ with respect to $(V,<)$ equals
	\[ P(\sigma)\cdot\underbrace{\begin{pmatrix}
		A_1 & A_{12} & \hdots & A_{1r} \\
		0 & A_2 & \ddots & \vdots \\
		\vdots & \ddots & \ddots & A_{r-1 r} \\
		0 & \hdots & 0 & A_r
	\end{pmatrix}}_{=:U}\]
    where $A_{ij}=0$ if $\lambda_i\not\olprec\lambda_j$. Since $\olvarphi$ is induced by the automorphism $\varphi\in \Aut(G)$, the matrix $P(\sigma)U$ is by construction equal to the matrix of the induced automorphism on $G/\sqrt{\gamma_2(G)}$ with respect to the $\Z$-basis $(V/\sqrt{\gamma_2(G_{\Gamma(k)})},<)$ (where the order is induced by $<$). In particular, $P(\sigma)U\in \Gl_{\vert V\vert}(\Z)$. Since $P(\sigma)\in \Gl_{\vert V\vert}(\Z)$, this implies that $U\in \Gl_{\vert V\vert}(\Z)$. Hence, we obtain that $A_i\in \Gl_{\vert \lambda_i\vert}(\Z)$ and $A_{ij}\in \Z^{\vert \lambda_i\vert \times \vert \lambda_j\vert}$.\\
    Now we derive the matrix of the induced automorphism on $\gamma_2(G)$ with respect to the ordered $\Z$-basis $(E^k,<)$.	Fix any edge $e=\{v,w\}\in E$ (with $v<w$) and note that the Lie bracket $[v,w]=e^{k(e)}$ is sent by the Lie algebra morphism $P(\sigma)$ to
	\[ [\sigma(v),\sigma(w)]=\sigma_E(e)^{\varepsilon_\sigma(e)k(\sigma_E(e))}.\]
	Thus the matrix (with respect to $(E^k,<)$) of the morphism on $\gamma_2(L^\Q(\Gamma,2))$ that is induced by $P(\sigma)$ equals
	\[ P(\sigma_E)D(\varepsilon_\sigma). \]
	So it suffices to look at the morphism on $\gamma_2(L^\Q(\Gamma,2))$ induced by $U$. Fix any edge $e=\{v,w\}$ with $v<w$. Note that the only non-zero matrices in the column of $U$ corresponding with $[v]$ are those matrices in the row corresponding with $[v']$ where $[v']\olprec [v]$ (or equivalently $v'\prec v$). Hence, there are $a_{v'},b_{w'}\in \Z$ (for all $v'\prec v$ and $w'\prec w$) such that
    \[Uv=\sum_{\substack{v'\in V\\v'\prec v}}a_{v'}v' \quad \text{and} \quad Uw=\sum_{\substack{w'\in V\\w'\prec w}}b_{w'}w'.\]
    Thus we obtain that
    \begin{align*}
        Ue^{k(e)}&=U[v,w]=\left[\sum_{\substack{v'\in V\\v'\prec v}}a_{v'}v',\sum_{\substack{w'\in V\\w'\prec w}}b_{w'}w'\right]=\sum_{\substack{v'\in V\\v'\prec v}}\sum_{\substack{w'\in V\\w'\prec w}}a_{v'}b_{w'}[v',w']=\sum_{\substack{\{v',w'\}\in E\\\{v',w'\}\prec e}}a_{v'}b_{w'}[v',w']\\
        &\in \left\{\sum_{\substack{e'\in E\\e'\prec e}}a_{e'}(e')^{k(e')}\:\middle\vert\: a_{e'}\in \Z\right\}.
    \end{align*}
    Since $\mu_i\not\olprec \mu_j$ if $i>j$, it follows that the matrix (with respect to $(E,<)$) of the morphism on $\gamma_2(L^\Q(\Gamma,2))$ that is induced by $U$ is a block upper triangular matrix
    \[  \begin{pmatrix}
              B_1 & B_{12} & \hdots & B_{1s} \\
              0 & B_2 & \ddots & \vdots \\
              \vdots & \ddots & \ddots & B_{s-1 s} \\
              0 & \hdots & 0 & B_s
        \end{pmatrix} \]
    where $B_{ij} \in \Z^{|\mu_i| \times |\mu_j|}$ and $B_{ij}=0$ if $\mu_i\not\olprec\mu_j$. Hence, we obtain that the matrix of $\olvarphi\vert_{\gamma_2(L^\Q(\Gamma,2))}$ has the form
    \[ P(\sigma_E) \cdot D(\varepsilon_\sigma) \cdot
        \begin{pmatrix}
              B_1 & B_{12} & \hdots & B_{1s} \\
              0 & B_2 & \ddots & \vdots \\
              \vdots & \ddots & \ddots & B_{s-1 s} \\
              0 & \hdots & 0 & B_s
        \end{pmatrix}. \]
    Since this matrix coincides, by construction, with the matrix of $\varphi\vert_{\gamma_2(G)}\in \Aut(\gamma_2(G))$ it follows similarly as before that $B_i \in \Gl_{|\mu_i|}(\Z)$.

    By Theorem \ref{thm:GradedAutomorphismsLieAlgebra}, it holds that $p(\sigma)\in \Aut(\olGamma)$ is unique. The assignment $\varphi \to \overline{\varphi}$ from $\Aut(G)$ to $\Aut(L(\Gamma, 2))$ is a group morphism since the association of a Lie algebra to a group from section \ref{sec:LieAlgAssToGrps} is functorial. Using the last sentence of Theorem \ref{thm:GradedAutomorphismsLieAlgebra}, it follows that $\varphi \mapsto p(\sigma)$ defines a group homomorphism from $G$ to $\Aut(\overline{\Gamma})$.
\end{proof}


As stated at the end of the lemma above, any automorphism $\varphi\in \Aut(G_{\Gamma(k)})$ induces a unique permutation $p(\sigma) \in \Aut(\overline{\Gamma})$. In general, one does not obtain every permutation of $\Aut(\overline{\Gamma})$ in this way, as the weight function $k:E \to \N_0$ imposes conditions on $p(\sigma)$. Before describing these conditions, we introduce some necessary invariants of integer matrices and recall their relation with the Smith normal form.

\begin{definition}\label{DeterminantDivisors}
	Let $A\in \Z^{n\times n}$ be a matrix and $l=1,\dots,n$. Then the \textit{$l$-th determinant divisor} $d_l(A)$ of $A$ equals the greatest common divisor of the determinants of the $l\times l$ minors of $A$.\\
	In particular, if $A=\text{diag}(a_1,\dots,a_n)$ is a diagonal matrix, then $d_l(A)$ equals the greatest common divisor of all $l$-fold products of diagonal elements, i.e. for any $l=1,\dots, n$ it holds that
    \[ d_l(\text{diag}(a_1,\dots,a_n))= \gcd\left\{\prod_{i\in I}a_{i} \: \middle\vert
         I \subset \{1, \ldots, n\} \text{ with } |I| = l\right\}.\]
\end{definition}
The determinant divisors completely determine the Smith normal form of a matrix in $\Z$. This is summarized by the following lemma. Proofs of these facts and an introduction to the Smith normal form of a matrix can be found in most handbooks about basic algebra, for example in \cite[Part 1, section 1]{norm12}.

\begin{lemma}\label{lem:SNFMatrices}
	Let $A,B\in \Z^{n\times n}$ be two matrices, then the following are equivalent:
	\begin{enumerate}[label = (\roman*)]
		\item $A$ and $B$ have the same Smith normal form.
		\item there exist $P,Q\in \Gl_n(\Z)$ such that $A=PBQ$.
		\item $d_i(A)=d_i(B)$ for all $i=1,\dots,n$.
	\end{enumerate}
\end{lemma}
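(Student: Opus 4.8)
The plan is to prove the cycle of implications $(i)\Rightarrow(ii)\Rightarrow(iii)\Rightarrow(i)$, taking as the only external input the \emph{existence} of a Smith normal form, which we take from \cite{norm12}: every $A\in\Z^{n\times n}$ can be written $A=P_AD_AQ_A$ with $P_A,Q_A\in\Gl_n(\Z)$ and $D_A=\mathrm{diag}(s_1(A),\dots,s_n(A))$ satisfying $s_1(A)\mid s_2(A)\mid\dots\mid s_n(A)$ (the classical output of the Euclidean algorithm carried out with integer row and column operations, each being multiplication by an element of $\Gl_n(\Z)$). With this in hand, $(i)\Rightarrow(ii)$ is immediate: if $A$ and $B$ share a Smith normal form $D$, write $A=P_ADQ_A$, $B=P_BDQ_B$ and take $P=P_AP_B^{-1}$, $Q=Q_B^{-1}Q_A$, so that $A=PBQ$ with $P,Q\in\Gl_n(\Z)$.

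The heart of the argument is $(ii)\Rightarrow(iii)$, which amounts to the statement that the determinant divisors $d_l$ are invariant under left and right multiplication by elements of $\Gl_n(\Z)$. The key point, and the one I expect to be the only genuinely nontrivial step, is the Cauchy--Binet formula: for integer matrices $M,N$ of compatible sizes, every $l\times l$ minor of $MN$ is a $\Z$-linear combination of products (minor of $M$)$\,\cdot\,$(minor of $N$); since $d_l(M)$ divides each $l\times l$ minor of $M$ and $d_l(N)$ each $l\times l$ minor of $N$, it follows that $d_l(M)\mid d_l(MN)$ and $d_l(N)\mid d_l(MN)$. Applying this to $A=P(BQ)$ and then to $BQ=B\,Q$ gives $d_l(B)\mid d_l(A)$ for every $l$; applying it in the same way to $B=P^{-1}AQ^{-1}$ gives $d_l(A)\mid d_l(B)$. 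As determinant divisors are nonnegative by convention, $d_l(A)=d_l(B)$ for all $l$.

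For $(iii)\Rightarrow(i)$ I would first compute the determinant divisors of a matrix already in Smith normal form: if $D=\mathrm{diag}(s_1,\dots,s_n)$ with $s_1\mid\dots\mid s_n$, then by Definition \ref{DeterminantDivisors} one has $d_l(D)=\gcd\{\prod_{i\in I}s_i : |I|=l\}=s_1s_2\cdots s_l$, since the divisibility chain makes the index set $\{1,\dots,l\}$ realize the gcd. Combining this with $(ii)\Rightarrow(iii)$ applied to $A=P_AD_AQ_A$ (and likewise to $B$) yields $d_l(A)=s_1(A)\cdots s_l(A)$ and $d_l(B)=s_1(B)\cdots s_l(B)$. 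Setting $d_0=1$ and inducting on $l$, the hypothesis $d_l(A)=d_l(B)$ forces $s_l(A)=s_l(B)$: if $d_{l-1}(A)=d_{l-1}(B)\neq0$ one simply divides, $s_l=d_l/d_{l-1}$; if it is $0$ then some earlier $s_j$ already vanishes and the divisibility chain makes both $s_l$ equal to $0$. Hence $D_A=D_B$, which is $(i)$. The same computation shows en passant that the Smith normal form of a matrix is uniquely determined by the matrix, which is what makes the phrase ``the same Smith normal form'' in $(i)$ meaningful. Everything beyond the Cauchy--Binet input is routine bookkeeping with the divisibility chain.
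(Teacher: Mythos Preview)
Your proof is correct and complete. However, the paper does not actually prove this lemma: it is stated as a standard fact, with the remark that ``proofs of these facts and an introduction to the Smith normal form of a matrix can be found in most handbooks about basic algebra, for example in \cite[Part 1, section 1]{norm12}.'' So there is no proof in the paper to compare against; you have supplied precisely the classical argument (Cauchy--Binet for invariance of the $d_l$ under $\Gl_n(\Z)$-equivalence, followed by reading off the $s_l$ from the $d_l$ via the divisibility chain) that one would find in the cited reference.
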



\begin{notation}
	Let $\Gamma=(V,E)$ be a graph with weight function $k:E\to \N_0$ on its edges and quotient graph $\olGamma=(\Lambda,\olE,\Psi)$. Recall from section \ref{sec:relationsOnVerticesAndEdges} that any edge $e = \{v, w\} \in E$ gives both an equivalence class $\mu = [e]$ and an edge in the quotient graph $\{[v], [w]\}$ which can be identified under the map from Equation (\ref{eq:identificationEbarAndM}). The edges in the equivalence class $\mu$ have weights $k\vert_{\mu}$ and thus the diagonal matrix with these weights on its diagonal equals $D(k\vert_{\mu})$. For any such $\mu \in M$ and $l=1,\dots,\vert\mu\vert$ we denote with $d_l(\mu)$ the $l$-th determinant divisor of the matrix $D(k\vert_{\mu})$, i.e.
    \[ d_l(\mu) := \gcd\left\{\prod_{e \in I} k(e) \: \middle| \:
         I \subseteq \mu \text{ with } |I| = l \right\}.\]
\end{notation}
Now we are ready to formulate a connection between the possible graph automorphisms $\sigma \in \Aut(\Gamma)$ that can occur in the matrix representation from Lemma \ref{lem:inducedAutoAbAndGamma2} and the weight function $k$ on the edges of $\Gamma$.
\begin{lemma}\label{lem:RestrictionsWeightsUsingDeterminantDivisors}
	Let $\Gamma=(V,E)$ be a finite undirected simple graph with weight function $k:E\to \N_0$ on its edges. Suppose that $\varphi\in \Aut(G_{\Gamma(k)})$ and let $\sigma \in \Aut(\Gamma)$ be a graph automorphism which can occur in the matrix representation of $\varphi$ given by Lemma \ref{lem:inducedAutoAbAndGamma2}. Let $e \in E$ be any edge, then for any $l = 1,\dots,\vert [e] \vert$ it holds that
	\[ d_l([e])=d_l([\sigma_E(e)]).\]
	In particular, it holds that:
	\[ \begin{cases}
		\underset{e'\in [e]}{\gcd}(k(e'))&=\underset{e'\in [\sigma_E(e)]}{\gcd}(k(e'))\\
        \noalign{\vskip9pt}
		\prod\limits_{e'\in [e]} k(e')&=\prod\limits_{e'\in [\sigma_E(e)]} k(e').
	\end{cases}\]
\end{lemma}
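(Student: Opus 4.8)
The idea is to extract from Lemma~\ref{lem:inducedAutoAbAndGamma2} the precise matrix identity relating the action of $\varphi$ on $\gamma_2(G)$ to the block $\mu$-structure and then apply the Smith normal form characterisation of Lemma~\ref{lem:SNFMatrices}. First I would recall from Lemma~\ref{lem:inducedAutoAbAndGamma2}(ii) that the matrix of $\varphi|_{\gamma_2(G)}$ with respect to $(E^k,<)$ has the form $P(\sigma_E)\cdot D(\varepsilon_\sigma)\cdot B$, where $B$ is block upper triangular with diagonal blocks $B_i\in \Gl_{|\mu_i|}(\Z)$ indexed by the equivalence classes $\mu_1,\dots,\mu_s$, and that this whole matrix lies in $\Gl_{|E|}(\Z)$. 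By Lemma~\ref{lem:AutomorphismOnSqrtGamma2}, the induced matrix on $\sqrt{\gamma_2(G)}$ with respect to $(E,<)$ is $D(k)\big(P(\sigma_E)D(\varepsilon_\sigma)B\big)D(k)^{-1}$, and this matrix also has integer entries and integer inverse, i.e. lies in $\Gl_{|E|}(\Z)$.

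**Reducing to a single block.** The key structural observation is that $P(\sigma_E)$ permutes the coordinate blocks according to the permutation $p(\sigma)$ induced on $M\cong \overline E$, and since $p(\sigma)$ is a graph automorphism of $\overline\Gamma$ preserving the partial order $\olprec$ on $M$ (this is part of the homomorphism statement), it maps each class $\mu_i$ to a class $\mu_{\tau(i)}$ of the same size, and $D(\varepsilon_\sigma)$ is a $\pm1$ diagonal matrix. Consider the identity
\[ B' := D(k)\,P(\sigma_E)\,D(\varepsilon_\sigma)\,B\,D(k)^{-1} \in \Gl_{|E|}(\Z). \]
Now I would look at the single block of rows indexed by $\mu_{\tau(i)}$ and columns indexed by $\mu_i$ together with, in a suitable order on $M$, the fact that $B$ is block upper triangular: for the $\olprec$-minimal classes $\mu_i$ the only nonzero block in column $\mu_i$ of $B$ is the diagonal block $B_i$, so the corresponding block of $B'$ is exactly $D(k|_{\mu_{\tau(i)}})\,(\text{permutation})\,(\pm1)\,B_i\,D(k|_{\mu_i})^{-1}$, which must be an integer matrix; and since the whole matrix $B'$ is in $\Gl_{|E|}(\Z)$ and block-triangular in the pushed-forward order, its diagonal blocks must individually lie in $\Gl(\Z)$. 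The argument then proceeds by induction on the height in the poset $(M,\olprec)$: assuming the claim for all classes strictly below $\mu_i$, the off-diagonal contributions in column $\mu_i$ are already handled, and one concludes that the diagonal block $B'_{\tau(i),i} = D(k|_{\mu_{\tau(i)}})\cdot \pm P\cdot B_i\cdot D(k|_{\mu_i})^{-1}$ lies in $\Gl_{|\mu_i|}(\Z)$.

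**Finishing with Smith normal form.** Once we know $D(k|_{\mu_{\tau(i)}})\cdot(\pm P)\cdot B_i \cdot D(k|_{\mu_i})^{-1} \in \Gl_{|\mu_i|}(\Z)$ and, running the same argument for $\varphi^{-1}$, also the inverse direction, we get that $D(k|_{\mu_i})$ and $D(k|_{\mu_{\tau(i)}})$ differ by left and right multiplication by matrices in $\Gl_{|\mu_i|}(\Z)$ (absorbing the $\pm1$ diagonal and permutation $P$ into the unimodular factors). Here I take $\mu = [e]$ and $\mu_{\tau(i)} = [\sigma_E(e)]$, using that $\sigma_E$ descends to $p(\sigma)$ on $M\cong\overline E$ and hence $[\sigma_E(e)]$ is the image of $[e]$. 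By the equivalence (i)$\Leftrightarrow$(ii)$\Leftrightarrow$(iii) in Lemma~\ref{lem:SNFMatrices}, $D(k|_{[e]})$ and $D(k|_{[\sigma_E(e)]})$ have the same determinant divisors, i.e. $d_l([e]) = d_l([\sigma_E(e)])$ for all $l=1,\dots,|[e]|$. The two displayed special cases are then just $l=1$, giving $\gcd_{e'\in[e]}k(e') = \gcd_{e'\in[\sigma_E(e)]}k(e')$, and $l=|[e]|=|[\sigma_E(e)]|$, giving $\prod_{e'\in[e]}k(e') = \prod_{e'\in[\sigma_E(e)]}k(e')$.

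**Main obstacle.** The delicate point is the bookkeeping in the induction: one must be careful that the block-upper-triangular shape of $B$ with respect to the order on $M$ is compatible, after conjugating by $P(\sigma_E)$, with the order used to read off the diagonal blocks of $B'$ — this works precisely because $p(\sigma)$ is an order-automorphism of $(M,\olprec)$, so the permuted block structure is again upper triangular for the same partial order, and the diagonal block in position $(\tau(i),i)$ is genuinely isolated once the strictly-lower classes are controlled. Making this "the diagonal blocks of a block-triangular unimodular matrix are unimodular, and conjugating by $D(k)$ only rescales within blocks" argument precise, while tracking that $D(k)$ is itself block-diagonal along the $\mu$-decomposition, is the only real content; everything else is an application of the already-established Lemmas~\ref{lem:AutomorphismOnSqrtGamma2}, \ref{lem:inducedAutoAbAndGamma2} and \ref{lem:SNFMatrices}.
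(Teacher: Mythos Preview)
Your approach is correct and follows the same overall strategy as the paper: invoke Lemmas~\ref{lem:AutomorphismOnSqrtGamma2} and~\ref{lem:inducedAutoAbAndGamma2} to obtain the matrix identity, isolate a single diagonal block, and finish with Lemma~\ref{lem:SNFMatrices}. The paper's execution is cleaner in one respect. Instead of tracking the block at position $(\tau(i),i)$ in the permuted matrix $B'$ and arguing about a ``pushed-forward order'', the paper first commutes $D(k)$ past $P(\sigma_E)$ via the identity $D(k)\,P(\sigma_E)=P(\sigma_E)\,D(k\circ\sigma_E)$ and then strips off the unimodular factor $P(\sigma_E)\,D(\varepsilon_\sigma)$ from the left. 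What remains, $C=D(k\circ\sigma_E)\cdot B\cdot D(k)^{-1}$, is block upper triangular in the \emph{original} order on $M$, so each diagonal block $D((k\circ\sigma_E)|_{\mu_i})\cdot B_i\cdot D(k|_{\mu_i})^{-1}$ lies in $\Gl_{|\mu_i|}(\Z)$ immediately. This makes both your induction on poset height and your appeal to $\varphi^{-1}$ unnecessary: from $X:=D((k\circ\sigma_E)|_{\mu_i})\,B_i\,D(k|_{\mu_i})^{-1}\in\Gl(\Z)$ one already has $D((k\circ\sigma_E)|_{\mu_i})=X\cdot D(k|_{\mu_i})\cdot B_i^{-1}$ with both outer factors unimodular, so Lemma~\ref{lem:SNFMatrices} applies directly. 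Your bookkeeping about $p(\sigma)$ being an order-automorphism of $(M,\olprec)$ is correct but avoidable once the permutation has been commuted away.
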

\begin{proof}
    By Lemma \ref{lem:inducedAutoAbAndGamma2}, we know that, with respect to the ordered basis $(E^k, <)$, the matrix representation of $\varphi$ restricted to $\gamma_2(G_{\Gamma(k)})$ is of the form
    \[P(\sigma_E) \cdot D(\varepsilon_\sigma) \cdot \underbrace{\begin{pmatrix}
		      B_1 & B_{12} & \hdots & B_{1s} \\
		      0 & B_2 & \ddots & \vdots \\
		      \vdots & \ddots & \ddots & B_{s-1 s} \\
		      0 & \hdots & 0 & B_s
	    \end{pmatrix}}_{=: B} \]
    where $B_i \in \Gl_{|\mu_i|}(\Z)$, $B_{ij} \in \Z^{|\mu_i| \times |\mu_j|}$ and $B_{ij}=0$ if $\mu_i\not\olprec\mu_j$. Now Lemma \ref{lem:AutomorphismOnSqrtGamma2} implies that
    \begin{equation}
        \label{eq:proofRestrictionGraphAutos}
        D(k) \cdot P(\sigma_{E}) \cdot D(\varepsilon_\sigma) \cdot B \cdot D(k)^{-1}\in \Gl_{\vert E\vert }(\Z).
    \end{equation}
    Remark that $D(k) \cdot P(\sigma_E)= P(\sigma_E) \cdot D( k\circ \sigma_E )$. Indeed, viewing these matrices as linear maps on $\Sp(E)$, it holds for any edge $e\in E$ that
	\begin{align*}
		D(k)P(\sigma_E)\, e &= D(k)\, \sigma_E(e)\\
        &= k(\sigma_E(e)) \, \sigma_E(e)\\
		&= k(\sigma_E(e)) P(\sigma_E) \, e\\
        &= P(\sigma_E) k(\sigma_E(e)) \, e\\
        &= P(\sigma_E) D(k \circ \sigma_E) \, e.
	\end{align*}
	Using this and the fact that $D(k \circ \sigma_E)$ and $D(\varepsilon_\sigma)$ commute, we find that
    \[ D(k) \cdot P(\sigma_{E}) \cdot D(\varepsilon_\sigma) \cdot B \cdot D(k)^{-1} = \Big( P(\sigma_E) \cdot D(\varepsilon_\sigma) \Big) \cdot \Big( D(k \circ \sigma_E) \cdot B \cdot D(k)^{-1} \Big),\]
    which, by Equation (\ref{eq:proofRestrictionGraphAutos}) must lie in $\Gl_{\vert E\vert }(\Z)$. Since $P(\sigma_E) \cdot D(\varepsilon_\sigma)$ is clearly a matrix in $\Gl_{\vert E\vert }(\Z)$, we find that 
    \[ C := D(k \circ \sigma_E) \cdot B \cdot D(k)^{-1} \in \Gl_{\vert E\vert }(\Z). \]
    Recall that $B$ is a block upper triangular matrix and thus $C$ is also block upper triangular. Note that if a block upper triangular matrix lies in $\Gl_n(\Z)$, then so does every one of its blocks on the diagonal. Thus by considering the block on the diagonal corresponding to $[e] = \mu = \mu_i$, we find that
    \[ D((k \circ \sigma_E)\vert_\mu) \cdot B_i \cdot D(k\vert_\mu) \in \Gl_{\vert \mu \vert}(\Z). \]
    Since $B_i$ also lies in $\Gl_{\vert\mu\vert}(\Z)$, Lemma \ref{lem:SNFMatrices} implies that $D((k \circ \sigma_E)\vert_\mu)$ and $D(k\vert_\mu)$ must have the same Smith normal form or, equivalently, that for any $l=1, \ldots, |\mu|$ we have
	\[ d_l(\sigma(\mu)) = d_l(D((k \circ \sigma_E)\vert_{\mu})) = d_l(D(k\vert_{\mu})) = d_l(\mu).\]
	This proves the main claim of the lemma. By noticing for all $\mu \in M$ that 
	\[ \begin{cases}
		d_1(\mu)&=\underset{e'\in \mu}{\gcd}(k(e'))\\
        \noalign{\vskip9pt}
		d_{\vert \mu \vert}(\mu)&=\prod\limits_{e'\in \mu} k(e'),
	\end{cases}\]
	we obtain the last claim of the statement.
\end{proof}

Recall that $M = E/\sim$ is the set of equivalence classes of edges and that for any $\sigma \in \Aut(\Gamma)$ we write $\sigma_E$ for the induced permutation on the edges. We define the following subgroup of $\Aut(\Gamma)$
\begin{equation}
    \label{eq:autGammak}
    \Aut(\Gamma(k)) := \Big\{ \sigma \in \Aut(\Gamma) \: \Big| \: \forall \mu \in M, 1 \leq l \leq |\mu|: d_l(\mu) = d_l(\sigma_E(\mu)) \Big\}.
\end{equation}
Then, for any field $K \subset \C$, define the following subgroup of $\Gl(\Sp_K(V))$
\[ \mathcal{G}_{\Gamma(k)} := P(\Aut(\Gamma(k))) \cdot \biggg(\prod_{\lambda\in \Lambda}\Gl(\Sp_K(\lambda))\biggg) \cdot \mathcal{U}. \]
Clearly $\mathcal{G}_{\Gamma(k)}$ is a subgroup of $\mathcal{G}_\Gamma$ as defined in Theorem \ref{thm:GradedAutomorphismsLieAlgebra}. Moreover, we can prove the following lemma about $\mathcal{G}_{\Gamma(k)}$, which will be useful in the following section.

\begin{lemma}
    \label{lem:linearAlgebraicGroupGGammak}
    Let $K$ be a subfield of $\C$. The group $\mathcal{G}_{\Gamma(k)}$ is a linear algebraic group that decomposes as a semi-direct product $\mathcal{L} \rtimes \mathcal{U}$ where
    \[ \mathcal{L} := P(\Aut(\Gamma(k))) \cdot \biggg(\prod_{\lambda\in \Lambda}\Gl(\Sp_K(\lambda))\biggg)\]
    is a linearly reductive group and $\mathcal{U}$ is the unipotent radical of $\mathcal{G}_{\Gamma(k)}$.
\end{lemma}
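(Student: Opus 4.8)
The plan is to read off the structure of $\mathcal{G}_{\Gamma(k)}$ from that of the ambient group $\mathcal{G}_\Gamma$, whose description as a linear algebraic group with unipotent radical $\mathcal{U}$ is supplied by Theorem \ref{thm:GradedAutomorphismsLieAlgebra}. The key observation is that $\mathcal{G}_{\Gamma(k)}$ is a subgroup of $\mathcal{G}_\Gamma$ which still contains the whole ``connected part'' $\prod_{\lambda\in\Lambda}\Gl(\Sp_K(\lambda))\cdot\mathcal{U}$ and differs from $\mathcal{G}_\Gamma$ only in the finite group of permutation matrices it permits, $P(\Aut(\Gamma(k)))$ in place of $P(\Aut(\Gamma))$. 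So essentially all one has to verify is that this replacement preserves Zariski-closedness and does not enlarge the unipotent radical; everything else is inherited from $\mathcal{G}_\Gamma$.

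First I would show that $\mathcal{G}_{\Gamma(k)}$ is a closed subgroup of $\mathcal{G}_\Gamma$. It is a subgroup: by (\ref{eq:autGammak}) $\Aut(\Gamma(k))$ is a subgroup of $\Aut(\Gamma)$, so $P(\Aut(\Gamma(k)))$ is a finite subgroup of $P(\Aut(\Gamma))$; since every graph automorphism permutes the coherent components while preserving their sizes, $P(\sigma)$ conjugates $\Gl(\Sp_K(\lambda))$ onto $\Gl(\Sp_K(\sigma(\lambda)))$, so $P(\Aut(\Gamma(k)))$ normalises $\prod_{\lambda}\Gl(\Sp_K(\lambda))$ and $\mathcal{L}=P(\Aut(\Gamma(k)))\cdot\prod_{\lambda}\Gl(\Sp_K(\lambda))$ is a subgroup; and $\mathcal{U}$, being the unipotent radical of $\mathcal{G}_\Gamma$, is normal in $\mathcal{G}_\Gamma$, hence normalised by $\mathcal{L}$, so $\mathcal{G}_{\Gamma(k)}=\mathcal{L}\cdot\mathcal{U}$ is a subgroup in which $\mathcal{U}$ is normal. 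For closedness, note that $\mathcal{H}:=\prod_{\lambda}\Gl(\Sp_K(\lambda))\cdot\mathcal{U}$ is a connected subgroup of finite index in $\mathcal{G}_\Gamma$ --- connected as the image of the connected set $\prod_{\lambda}\Gl(\Sp_K(\lambda))\times\mathcal{U}$ under multiplication, and of index at most $|\Aut(\Gamma)|$ since $\mathcal{G}_\Gamma=P(\Aut(\Gamma))\cdot\mathcal{H}$ --- hence it equals the identity component $\mathcal{G}_\Gamma^{0}$ and is in particular closed. The identity automorphism of $\Gamma$ trivially lies in $\Aut(\Gamma(k))$, so $\mathcal{H}\subseteq\mathcal{G}_{\Gamma(k)}$, and therefore $\mathcal{G}_{\Gamma(k)}=\bigcup_{\sigma\in\Aut(\Gamma(k))}P(\sigma)\,\mathcal{H}$ is a finite union of cosets of the closed subgroup $\mathcal{H}$, hence Zariski closed; as a closed subgroup of $\Gl(\Sp_K(V))$ it is a linear algebraic group.

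Next I would pin down the unipotent radical and the reductive part. One first checks $\mathcal{L}\cap\mathcal{U}=\{1\}$: this is the same elementary matrix computation that exhibits $P(\Aut(\Gamma))\cdot\prod_{\lambda}\Gl(\Sp_K(\lambda))$ as a Levi complement of $\mathcal{U}$ in $\mathcal{G}_\Gamma$, since a matrix of the block form displayed in Theorem \ref{thm:GradedAutomorphismsLieAlgebra} which is unipotent must be the identity. Together with $\mathcal{G}_{\Gamma(k)}=\mathcal{L}\cdot\mathcal{U}$ and the normality of $\mathcal{U}$, this yields the decomposition $\mathcal{G}_{\Gamma(k)}=\mathcal{L}\rtimes\mathcal{U}$ and the isomorphism $\mathcal{G}_{\Gamma(k)}/\mathcal{U}\cong\mathcal{L}$. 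Now $\mathcal{U}$ is connected (generated by the one-parameter subgroups $\{I+tE_{vw}\mid t\in K\}$), normal and unipotent, so $\mathcal{U}\subseteq R_u(\mathcal{G}_{\Gamma(k)})$; and the image of $R_u(\mathcal{G}_{\Gamma(k)})$ in $\mathcal{L}$ is a connected normal unipotent subgroup, hence lies in $R_u(\mathcal{L})$. But the identity component of $\mathcal{L}$ is the product $\prod_{\lambda}\Gl(\Sp_K(\lambda))$ of general linear groups, which is reductive, and the unipotent radical of a linear algebraic group equals that of its identity component; hence $R_u(\mathcal{L})=\{1\}$ and therefore $R_u(\mathcal{G}_{\Gamma(k)})=\mathcal{U}$. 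Finally $\mathcal{L}$ is linearly reductive: $K$ has characteristic zero, so its reductive identity component is linearly reductive (Weyl's complete reducibility), its finite group of components is linearly reductive (Maschke), and an extension of a linearly reductive group by a linearly reductive group is again linearly reductive.

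I expect no step to be a serious obstacle --- it is all standard structure theory of linear algebraic groups, the genuinely graph-theoretic content (which $\sigma$ survive) having already been packaged into the definition of $\Aut(\Gamma(k))$ via Lemma \ref{lem:RestrictionsWeightsUsingDeterminantDivisors}. The two points deserving care are (i) the Zariski-closedness of $\mathcal{G}_{\Gamma(k)}$, which hinges on the fact that it still contains $\mathcal{G}_\Gamma^{0}$; and (ii) checking that restricting the admissible permutation matrices does not introduce a new unipotent radical --- which it cannot, precisely because the reductive identity component $\prod_{\lambda}\Gl(\Sp_K(\lambda))$ is left untouched.
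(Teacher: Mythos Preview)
Your proposal is correct and follows essentially the same route as the paper: write $\mathcal{G}_{\Gamma(k)}$ as a finite union of $P(\sigma)$-translates of the connected group $\prod_{\lambda}\Gl(\Sp_K(\lambda))\cdot\mathcal{U}$ to get Zariski-closedness, identify this connected group as the common identity component of $\mathcal{G}_{\Gamma(k)}$ and $\mathcal{G}_\Gamma$ to inherit the unipotent radical $\mathcal{U}$, and argue linear reductivity of $\mathcal{L}$ from its identity component being a product of general linear groups. The paper's proof is considerably terser---it does not explicitly verify that $\mathcal{G}_{\Gamma(k)}$ is a subgroup, nor that $\mathcal{L}\cap\mathcal{U}=\{1\}$, nor the extension argument for linear reductivity---so your version fills in steps the paper leaves to the reader, but the underlying strategy is identical.
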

\begin{proof}
    To prove that $\mathcal{G}_{\Gamma(k)}$ is a linear algebraic group, it suffices to show that it is Zariski closed as a subset of $\Gl(\Sp(V))$ (with coordinates with respect to the basis of vertices $V$). This can be easily seen to be the case since it can be written as a finite union of Zariski closed subsets:
    \[ \bigcup_{\sigma \in \Aut(\Gamma(k))} P(\sigma) \cdot \biggg(\prod_{\lambda\in \Lambda}\Gl(\Sp_K(\lambda))\biggg) \cdot \mathcal{U}. \]
    To see that $\mathcal{L}$ is linearly reductive, it suffices to show that its Zariski-connected component at the identity is linearly reductive. This Zariski connected component is equal to $\prod_{\lambda\in \Lambda}\Gl(\Sp_K(\lambda))$. It must be linearly reductive as it is a direct product of general linear groups, which are known to be linearly reductive. That $\mathcal{U}$ is the unipotent radical of $\mathcal{G}_{\Gamma(k)}$ follows from the fact the $\mathcal{U}$ is the unipotent radical of $\mathcal{G}$ (see Theorem \ref{thm:GradedAutomorphismsLieAlgebra}). Indeed, $\mathcal{G}_{\Gamma(k)}$ and $\mathcal{G}$ have the same Zariski-connected component at the identity and thus the same unipotent radical.
\end{proof}

At last, from Lemma \ref{lem:RestrictionsWeightsUsingDeterminantDivisors}, we can now conclude the following corollary. The field in this corollary is $K = \Q$. Recall that $\pi:\Aut_g(L^\Q(\Gamma, 2)) \to \Gl(\Sp_\Q(V))$ is the restriction map. 

\begin{corollary}
\label{cor:inducedAutoLiesInLAGweights}
    Let $\Gamma = (V, E)$ be a finite undirected simple graph with weight function $k:E \to \N_0$ on its edges. For any automorphism $\varphi \in \Aut(G_{\Gamma(k)})$ it holds that $\pi(\overline{\varphi})$ lies in $\mathcal{G}_{\Gamma(k)}$, where $\overline{\varphi}$ denotes the induced automorphism on $L^\Q(\Gamma, 2)$.
\end{corollary}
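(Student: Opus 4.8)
The plan is to read this off as a corollary that splices together the three facts established just above. Theorem~\ref{thm:GradedAutomorphismsLieAlgebra} identifies the image of $\pi$ with $\mathcal{G}_\Gamma=P(\Aut(\Gamma))\cdot\bigl(\prod_{\lambda\in\Lambda}\Gl(\Sp_\Q(\lambda))\bigr)\cdot\mathcal{U}$, and the group $\mathcal{G}_{\Gamma(k)}$ differs from $\mathcal{G}_\Gamma$ only in that the factor $P(\Aut(\Gamma))$ is shrunk to $P(\Aut(\Gamma(k)))$. So the only thing to prove is that, in a decomposition of $\pi(\overline{\varphi})$ afforded by Theorem~\ref{thm:GradedAutomorphismsLieAlgebra}, the graph automorphism $\sigma$ can be taken inside the subgroup $\Aut(\Gamma(k))$ of Equation~(\ref{eq:autGammak}); the remaining factors automatically lie in $\prod_{\lambda\in\Lambda}\Gl(\Sp_\Q(\lambda))$ and $\mathcal{U}$.

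Concretely, I would start from $\varphi\in\Aut(G_{\Gamma(k)})$; by Lemma~\ref{lem:AllLieAlgebrasIso} it induces a graded automorphism $\overline{\varphi}$ of $L^\Q(G_{\Gamma(k)})\cong L^\Q(\Gamma,2)$, so Theorem~\ref{thm:GradedAutomorphismsLieAlgebra} gives $\pi(\overline{\varphi})=P(\sigma)\cdot L\cdot U$ with $\sigma\in\Aut(\Gamma)$, $L\in\prod_{\lambda\in\Lambda}\Gl(\Sp_\Q(\lambda))$ and $U\in\mathcal{U}$. By construction $P(\sigma)LU$ is the matrix, with respect to $(V,<)$, of the automorphism that $\varphi$ induces on $G_{\Gamma(k)}/\sqrt{\gamma_2(G_{\Gamma(k)})}$; hence $\sigma$ is a graph automorphism occurring in the matrix representation of $\varphi$ from Lemma~\ref{lem:inducedAutoAbAndGamma2}, which is exactly the hypothesis of Lemma~\ref{lem:RestrictionsWeightsUsingDeterminantDivisors}. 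That lemma then yields $d_l([e])=d_l([\sigma_E(e)])$ for every $e\in E$ and every $1\le l\le|[e]|$. Finally, a graph automorphism preserves the preorder $\prec$ on $V$ (it commutes with forming $\Omega'$ and $\Omega$) and hence the induced preorder and equivalence relation on $E$, so $\sigma_E$ descends to a permutation of $M=E/\sim$ with $\sigma_E([e])=[\sigma_E(e)]$; the preceding equalities thus say $d_l(\mu)=d_l(\sigma_E(\mu))$ for all $\mu\in M$ and all admissible $l$, i.e. $\sigma\in\Aut(\Gamma(k))$. Therefore $\pi(\overline{\varphi})=P(\sigma)LU\in P(\Aut(\Gamma(k)))\cdot\bigl(\prod_{\lambda\in\Lambda}\Gl(\Sp_\Q(\lambda))\bigr)\cdot\mathcal{U}=\mathcal{G}_{\Gamma(k)}$, as claimed.

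The one point that deserves a little care, rather than being a pure substitution, is the compatibility used in the middle step: one must make sure that the $\sigma$ produced by the Lie-algebra description of Theorem~\ref{thm:GradedAutomorphismsLieAlgebra} genuinely qualifies as a $\sigma$ ``which can occur in the matrix representation of $\varphi$ given by Lemma~\ref{lem:inducedAutoAbAndGamma2}'', so that Lemma~\ref{lem:RestrictionsWeightsUsingDeterminantDivisors} applies to it. This is true essentially by construction, since Lemma~\ref{lem:inducedAutoAbAndGamma2} was itself deduced from Theorem~\ref{thm:GradedAutomorphismsLieAlgebra} via the identification of $\overline{\varphi}\vert_{\Sp_\Q(V)}$ with the map induced by $\varphi$ on $G_{\Gamma(k)}/\sqrt{\gamma_2(G_{\Gamma(k)})}$, but it should be spelled out so the reader sees the hypotheses are met. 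Everything else — in particular the passage from ``$d_l([e])=d_l([\sigma_E(e)])$ for all $e$'' to ``$\sigma\in\Aut(\Gamma(k))$'' — is just an unwinding of the definition in Equation~(\ref{eq:autGammak}).
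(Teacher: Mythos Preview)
Your proposal is correct and follows exactly the route the paper intends: the paper states the corollary as an immediate consequence of Lemma~\ref{lem:RestrictionsWeightsUsingDeterminantDivisors}, and your argument simply spells out that deduction via the decomposition from Theorem~\ref{thm:GradedAutomorphismsLieAlgebra}/Lemma~\ref{lem:inducedAutoAbAndGamma2}. The extra care you take in identifying the $\sigma$ from the Lie-algebra description with the one in Lemma~\ref{lem:inducedAutoAbAndGamma2}, and in passing from edges to equivalence classes $\mu\in M$, is exactly what is implicit in the paper's one-line justification.
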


\newpage
\section{The \texorpdfstring{$R_{\infty}$}{Roo}--property for the groups \texorpdfstring{$G_{\Gamma(k)}$}{G_Gamma(k)}}

For any group $G$ we define the \textit{$R_{\infty}$--nilpotency index} as the least integer $c$ such that $G/\gamma_{c+1}(G)$ has the $R_{\infty}$--property. If such an integer does not exist, we define it to be infinite. Recall that by construction $G_{\Gamma}=A(\Gamma)/\gamma_3(A(\Gamma))$. In \cite{witd23} the author gives a lower and upper bound for the $R_{\infty}$--nilpotency index of $A(\Gamma)$.

\begin{theorem}{\cite[Theorem 1.6]{witd23}}\label{thm:LowerUpperBoundNilpIndex}
	If $\Gamma$ is a non-empty finite undirected simple graph, then $A(\Gamma)$ has the $R_{\infty}$--property and its $R_{\infty}$--nilpotency index $c$ satisfies
	\[ \xi(\Gamma)\leq c\leq \Xi(c) \]
	where
	\[ \xi(\Gamma):=\min_{\{\lambda_1,\lambda_2\}\in \olE} (\vert \lambda_1\vert +\vert \lambda_2 \vert)\quad \text{and}\]
	\[ \Xi(\Gamma):=\min_{\{\lambda_1,\lambda_2\}\in \olE} c(\lambda_1,\lambda_2)\quad \text{with}\quad 
	c(\lambda_1,\lambda_2):=\begin{cases}
		\max\left\{2\vert \lambda_1\vert +\vert \lambda_2 \vert, \vert \lambda_1 \vert +2\vert \lambda_2 \vert\right\}&\text{if }\lambda_1\neq \lambda_2\\
		2\vert \lambda_1\vert &\text{if }\lambda_1=\lambda_2
	\end{cases}.\]
\end{theorem}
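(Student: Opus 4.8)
The plan is to translate the statement into one about eigenvalues of integer-like graded automorphisms of $L^\Q(\Gamma,c)$ and then treat the two bounds separately, the upper bound being the delicate direction.

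\emph{Reduction.} The group $A(\Gamma,c)=A(\Gamma)/\gamma_{c+1}(A(\Gamma))$ is finitely generated, torsion-free and nilpotent, and $L^\Q(A(\Gamma,c))\cong L^\Q(\Gamma,c)$ (as in the proof of Proposition~\ref{prop:isoGradLieAlgImplicationRinfty}, via (\ref{eq:isomorphismRAAGLieAlgGrp})), so Lemma~\ref{lem:eigenvalue1} says that an automorphism $\phi$ of $A(\Gamma,c)$ satisfies $R(\phi)<\infty$ iff the induced graded automorphism of $L^\Q(\Gamma,c)$ has no eigenvalue $1$. Combined with Corollary~3.10 of \cite{witd23}, which realizes any integer-like graded automorphism of $L^\Q(\Gamma,c)$ up to characteristic polynomial by a group automorphism of $A(\Gamma,c)$, this gives the reformulation: $A(\Gamma,c)$ has the $R_{\infty}$--property if and only if every integer-like graded automorphism of $L^\Q(\Gamma,c)$ has eigenvalue $1$. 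I would also record two elementary inputs. First, any graded automorphism $\phi$ of $L^\Q(\Gamma,c)$ induces one on $L^\Q(\Gamma,2)=L^\Q(\Gamma,c)/\gamma_3(L^\Q(\Gamma,c))$ with the same restriction to $\Sp_\Q(V)$, so by Theorem~\ref{thm:GradedAutomorphismsLieAlgebra} that restriction lies in $\mathcal G_\Gamma$; in particular $\phi|_{\Sp_\Q(V)}=P(\sigma)\cdot U$ with $U$ block upper-triangular along the coherent components and $p(\sigma)\in\Aut(\overline\Gamma)$ preserving $\olprec$. Second, a multidegree $\mathbf a$ is realized by a nonzero element of $L^\Q(\Gamma)$ precisely when its support induces a connected subgraph of $\Gamma$ on at least two vertices, or $\mathbf a$ has total degree $1$; the spectrum of $\phi$ on $L^\Q(\Gamma,c)$ then lies inside the set of products $\prod_{v\in V}\mu_v^{a_v}$ over such $\mathbf a$ of degree $\le c$, where the $\mu_v$ are the eigenvalues of $\phi$ on $\Sp_\Q(V)$. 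Finally, once $A(\Gamma,\Xi(\Gamma))$ has $R_\infty$, so does $A(\Gamma)$ by Lemma~1.1 of \cite{gw09-1} applied to $\gamma_{\Xi(\Gamma)+1}(A(\Gamma))$.

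\emph{Lower bound $c\ge\xi(\Gamma)$.} To show $A(\Gamma,c)$ fails $R_\infty$ whenever $c<\xi(\Gamma)$, I would use the block-diagonal automorphism $\phi=\bigoplus_{\lambda\in\Lambda}A_\lambda$ with each $A_\lambda\in\Gl_{|\lambda|}(\Z)$ of determinant $-1$; since edges within and between coherent components are all-or-nothing, $\phi$ preserves the defining ideal and is a well-defined integer-like graded automorphism of $L^\Q(\Gamma,c)$. Split the realized multidegrees into those constant on each coherent component, $\mathbf a=\sum_\lambda k_\lambda\mathbf 1_\lambda$, and the rest. For the former the associated eigenvalue is $\prod_\lambda(\det A_\lambda)^{k_\lambda}=(-1)^{\sum_\lambda k_\lambda}$, and connectedness of supports forces any such realized $\mathbf a$ with $\sum_\lambda k_\lambda$ even to have total degree at least $\xi(\Gamma)$: two distinct contributing components must be joined by an edge of $\overline\Gamma$, and a single contributing component with coefficient $\ge 2$ must be a clique and hence carry a loop of $\overline\Gamma$. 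So for $c<\xi(\Gamma)$ none of these occur and all these eigenvalues equal $-1$. For $\mathbf a$ not constant on some component the eigenvalue $\prod_v\mu_v^{a_v}$ is a non-constant function of the blocks $A_\lambda$, so the tuples of blocks giving a forbidden eigenvalue $1$ form a proper Zariski-closed subset of $\prod_\lambda\{A:\det A=-1\}$; integral points being Zariski-dense, a suitable integral choice of the $A_\lambda$ avoids all finitely many bad loci simultaneously. The resulting $\phi$ has no eigenvalue $1$, so $A(\Gamma,c)$ fails $R_\infty$.

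\emph{Upper bound $c\le\Xi(\Gamma)$.} For $c\ge\Xi(\Gamma)$ fix an edge $\{\lambda_1,\lambda_2\}\in\overline{E}$ with $c(\lambda_1,\lambda_2)=\Xi(\Gamma)$ and let $\phi$ be integer-like. The unipotent radical $\mathcal U$ and the strictly upper-triangular cross-terms do not affect the spectrum, so one may pass to the associated graded of $\phi$ for the filtration of $L^\Q(\Gamma,c)$ recording which coherent components the letters of a commutator come from: the spectrum is unchanged, the associated graded decomposes by the number of letters taken from each component, $\phi$ permutes these pieces along the $p(\sigma)$-orbits of components, and going once around the orbit-cycle through $\lambda_1$ (resp. $\lambda_2$) yields a return map $R_1$ on $\Sp_\Q(\lambda_1)$ (resp. $R_2$ on $\Sp_\Q(\lambda_2)$) lying in $\Gl(\Z)$, hence of determinant $\varepsilon_1:=\det R_1\in\{\pm1\}$ (resp. $\varepsilon_2$). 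A four-case sign analysis in $(\varepsilon_1,\varepsilon_2)$ produces positive integers $k_1,k_2$ (or a single $k$ when $\lambda_1=\lambda_2$) with $\varepsilon_1^{k_1}\varepsilon_2^{k_2}=1$ and $k_1|\lambda_1|+k_2|\lambda_2|\le\max\{2|\lambda_1|+|\lambda_2|,\ |\lambda_1|+2|\lambda_2|\}=c(\lambda_1,\lambda_2)\le c$ (respectively $2|\lambda_1|=c(\lambda_1,\lambda_1)\le c$). The multidegree $\mathbf a=k_1\mathbf 1_{\lambda_1}+k_2\mathbf 1_{\lambda_2}$ is realized since its support $\lambda_1\cup\lambda_2$ is connected, and the power $\phi^\ell$ of $\phi$ that fixes the corresponding piece $M_{\mathbf a}$ has $\varepsilon_1^{k_1}\varepsilon_2^{k_2}=1$ among its eigenvalues there — this eigenvalue is contributed by the ``each variable to the same power'' monomials and survives into the Lie part even when $R_1,R_2$ are not semisimple. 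Since $\phi$ cyclically permutes the $\ell$ distinct pieces $M_{\mathbf a},M_{p(\sigma)\mathbf a},\dots$, summing the $\phi$-orbit of a fixed vector of $\phi^\ell|_{M_{\mathbf a}}$ for eigenvalue $1$ yields a nonzero fixed vector of $\phi$. Hence $\phi$ has eigenvalue $1$, so every automorphism of $A(\Gamma,c)$ has infinite Reidemeister number and $A(\Gamma,c)$ has the $R_{\infty}$--property; as $\Xi(\Gamma)<\infty$, the $R_{\infty}$--nilpotency index is finite and $A(\Gamma)$ has $R_\infty$.

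\emph{Main obstacle.} The upper bound is where the real work lies. Two points need care: making the ``associated graded / component-multidegree'' bookkeeping precise — in particular checking that the monomial eigenvalue $\varepsilon_1^{k_1}\varepsilon_2^{k_2}$ genuinely survives into $M_{\mathbf a}$ when $R_1,R_2$ are not semisimple and that the relevant power $\phi^\ell$ really acts through $R_1\oplus R_2$ — and verifying that the sign analysis never needs degree beyond $c(\lambda_1,\lambda_2)$; this last point is exactly where the loop-versus-edge dichotomy and the asymmetric exponents $2|\lambda_1|+|\lambda_2|$ versus $|\lambda_1|+2|\lambda_2|$ in the definition of $c(\lambda_1,\lambda_2)$ are forced. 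The presence of a non-trivial $p(\sigma)$, which brings in orbit lengths and return maps, is also what makes the observation ``a cyclic permutation of subspaces on which the return map fixes a vector has a genuine fixed vector'' the right tool, rather than a naive passage to a power of $\phi$.
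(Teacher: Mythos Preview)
The paper does not contain a proof of this theorem: it is quoted verbatim from \cite[Theorem 1.6]{witd23} and used as a black box in the proof of Theorem~\ref{thm:MainResult}. There is therefore nothing in the present paper to compare your proposal against; a genuine comparison would require consulting \cite{witd23} directly.

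That said, your overall strategy is the natural one and matches how the paper itself uses the surrounding machinery: reduce via Lemma~\ref{lem:eigenvalue1} and \cite[Corollary~3.10]{witd23} to a statement about eigenvalues of integer-like graded automorphisms of $L^\Q(\Gamma,c)$, then use the structure of $\mathcal G_\Gamma$ from Theorem~\ref{thm:GradedAutomorphismsLieAlgebra}. Your lower-bound construction (block-diagonal $A_\lambda$ with $\det A_\lambda=-1$, plus a Zariski-density argument to avoid accidental eigenvalue $1$) is exactly the flavour of argument the paper invokes in the proof of Theorem~\ref{thm:MainResult}(i), where \cite[Lemma~6.2]{witd23} is cited for the existence of suitable polynomials.

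Your upper-bound sketch, however, has real gaps that you yourself flag. The claim that the product eigenvalue $\varepsilon_1^{k_1}\varepsilon_2^{k_2}$ ``survives into the Lie part'' of the multidegree piece $M_{\mathbf a}$ is the crux, and it is not automatic: the relevant graded piece of the free Lie algebra is a proper subspace of the tensor-algebra piece, and one must check that the symmetric eigenvector you have in mind actually lies in (or has nonzero projection to) the Lie subspace. Similarly, the reduction ``the unipotent radical $\mathcal U$ does not affect the spectrum'' is only true for the spectrum on $\Sp_\Q(V)$; on higher graded pieces the cross-terms $A_{ij}$ can change the Jordan structure and you need to argue carefully that they do not introduce or destroy the eigenvalue $1$. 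Finally, the four-case sign analysis and the handling of nontrivial $p(\sigma)$-orbits are asserted rather than carried out. These are precisely the technical points that make \cite[Theorem~1.6]{witd23} a theorem rather than a remark, and you should consult that reference to see how they are resolved.
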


\begin{theorem}\label{thm:MainResult}
	Let $\Gamma=(V,E)$ be a finite undirected simple graph and denote with $\Gamma_0=(V_0,E_0)$ the subgraph of $\Gamma$ containing the vertices belonging to a coherent component of size one. Then the following holds.
	\begin{enumerate}[label = (\roman*)]
		\item If $E_0=\emptyset$, then $G_{\Gamma}$ does not have the $R_{\infty}$--property.
		\item If $\Gamma_0$ has edges and $m\in \N \setminus \{0, 1\}$, then there exists a weight function $k:E\to \N_0$ such that $G_{\Gamma(k)}$ has the $R_{\infty}$--property and $\vert G_{\Gamma(k)}:G_{\Gamma}\vert=m$.
		\item If $\Gamma_0$ has edges and $\Gamma = \Gamma_0$, then $G_{\Gamma}$ has the $R_{\infty}$--property. In particular, all finitely generated torsion-free 2-step nilpotent groups commensurable with $G_{\Gamma}$ have the $R_{\infty}$--property.
	\end{enumerate}
\end{theorem}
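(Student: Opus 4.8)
The plan is to treat the three parts separately, in each case pinning down the automorphism induced on the Lie algebra $L^{\Q}(\Gamma,2)$ and deciding whether $1$ occurs as an eigenvalue, via Lemma \ref{lem:eigenvalue1} and Proposition \ref{prop:RinftyAndCommensurability}.

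For (i) I would argue purely from Theorem \ref{thm:LowerUpperBoundNilpIndex}. If $E_0=\emptyset$, then in the quotient graph $\olGamma$ every edge joins coherent components $\lambda_1,\lambda_2$ with $|\lambda_1|+|\lambda_2|\ge 3$ (a loop of $\olGamma$ forces a component of size $\ge 2$, hence a sum $\ge 4$; a non-loop joining two singletons would lie in $E_0$), so $\xi(\Gamma)\ge 3$. Hence the $R_{\infty}$--nilpotency index of $A(\Gamma)$ is at least $3$, and therefore $G_{\Gamma}=A(\Gamma)/\gamma_3(A(\Gamma))$ does not have the $R_{\infty}$--property (and if $\Gamma$ has no edges, $G_\Gamma$ is free abelian, so the same conclusion holds trivially).

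For (ii) I would pick an edge $e_0=\{a,b\}$ of $\Gamma_0$, so that $[a],[b]$ are singletons and $[e_0]=\{e_0\}$, and put $k(e_0)=m$, $k(e)=1$ otherwise; then $|G_{\Gamma(k)}:G_{\Gamma}|=\prod_{e\in E}k(e)=m$ by Lemma \ref{lem:structure GGammak}. Given $\varphi\in\Aut(G_{\Gamma(k)})$ with induced $\olvarphi$ on $L^{\Q}(\Gamma,2)$, Corollary \ref{cor:inducedAutoLiesInLAGweights} gives $\pi(\olvarphi)\in\mathcal G_{\Gamma(k)}$, and since $d_1([e_0])=m$ while $d_1(\mu)=1$ for every other edge-class, the automorphism of $\olGamma$ attached to $\olvarphi$ must fix $[e_0]$; hence the graph automorphism $\sigma$ appearing in the description of $\olvarphi$ either fixes or swaps $a$ and $b$. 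Replacing $\olvarphi$ by its semisimple part — same spectrum, and $\pi$ of it conjugate to a monomial matrix $P(\sigma)D(\vec d)$ since $[a],[b]$ lie in singleton components — I would then use Gauss's lemma on the factorisation $\chi_{\olvarphi}=\chi_{\olvarphi|_{\Sp_\Q(V)}}\cdot\chi_{\olvarphi|_{\gamma_2}}$ (monic rational factors, monic integral product, determinant $\pm1$) to conclude that each product $D_C$ of the entries of $\vec d$ along a $\langle\sigma\rangle$-orbit lies in $\{\pm1\}$. A two-line case check finishes it: if $\sigma$ fixes $a,b$ then among $D_{\{a\}},D_{\{b\}},D_{\{a\}}D_{\{b\}}$ one equals $1$; if $\sigma$ swaps them, the $\sigma$-orbit $\{a,b\}$ contributes $\pm\sqrt{D_{\{a,b\}}}$ to $\Sp_\Q(V)$ and the edge $e_0$ contributes $-D_{\{a,b\}}$ to $\gamma_2$, and one of these is $1$. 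Thus $R(\varphi)=\infty$ for every $\varphi$, so $G_{\Gamma(k)}$ has the $R_{\infty}$--property.

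For (iii), note that $\Gamma=\Gamma_0$ exactly says $\Gamma$ is transposition-free, and it has an edge; by Proposition \ref{prop:RinftyAndCommensurability} it suffices to show every integer-like automorphism $\psi$ of $M(G_{\Gamma})\cong L^{\Q}(\Gamma,2)$ has eigenvalue $1$, which yields both assertions simultaneously. First I would reduce to graded $\psi$: $\psi$ preserves $\gamma_2$, the induced map on the abelianization satisfies $[\overline\psi(v),\overline\psi(w)]=0$ whenever $\{v,w\}\notin E$, hence extends to a graded automorphism with the same action on $\gamma_2$ and the same spectrum; then pass to the semisimple part, so $\pi(\psi)=P(\sigma)D(\vec d)$ is monomial. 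As in (ii), Gauss's lemma together with $\det=\pm1$ forces every orbit product $D_C$ (over $\langle\sigma\rangle$-orbits of vertices) and every product $\tilde D_{\tilde C}$ (over $\sigma_E$-orbits of edges) into $\{\pm1\}$, and $\psi$ has eigenvalue $1$ unless all of these equal $-1$. The contradiction would come from a combinatorial lemma: \emph{in a transposition-free graph with an edge, for every $\sigma\in\Aut(\Gamma)$ there is an edge lying inside a single $\langle\sigma\rangle$-orbit, or an edge joining two distinct $\langle\sigma\rangle$-orbits of equal size.} In the first case the product for that edge-orbit is $D_C^2$ or $(-1)D_C$, in the second it is $D_{C_1}D_{C_2}$, and each equals $1$ once the relevant $D_C$'s are $-1$ — contradicting the standing assumption. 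I expect this lemma to be the main obstacle; transposition-freeness is used essentially, because if instead every edge joined $\langle\sigma\rangle$-orbits of distinct sizes one analyses an edge-incident orbit of maximal size and exploits that coprime orbit sizes force complete bipartite joins, which in turn produce $\sim$-equivalent vertices, contradicting transposition-freeness.
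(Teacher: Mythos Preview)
Your argument for part (i) is correct and in fact cleaner than the paper's. The paper splits into the cases $V_0=\emptyset$ (where $\xi(\Gamma)\ge 4$ directly) and $V_0\neq\emptyset$, and in the second case explicitly constructs an automorphism with no eigenvalue $1$ using companion matrices from \cite{witd23}. You observe instead that $E_0=\emptyset$ alone already forces $\xi(\Gamma)\ge 3$, which via Theorem~\ref{thm:LowerUpperBoundNilpIndex} is exactly enough to conclude that $G_\Gamma=A(\Gamma,2)$ lacks $R_\infty$. This bypasses the explicit construction entirely.

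Part (ii) is essentially the paper's proof. One imprecision: after conjugating the semisimple part into $\mathcal L$ you assert that $\pi$ of it is a monomial matrix $P(\sigma)D(\vec d)$. This is only true on $\Sp_\Q(V_0)$; on the full $\Sp_\Q(V)$ you get $P(\sigma)\cdot\mathrm{diag}(A_1,\dots,A_r)$ with possibly larger blocks. The argument survives because the subspace $\Sp_\Q\{a,b\}$ is $g$-invariant (both $[a]$ and $[b]$ are singletons and $\sigma$ preserves the pair), so one can factor $\chi_g=\chi_{g|_{\{a,b\}}}\cdot\chi_{g|_{\text{rest}}}$ and apply Gauss there; your orbit products $D_C$ are then only meaningful for orbits inside $V_0$. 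The paper phrases this as ``each $A_i$ of size $1\times1$ equals $(\pm1)$'' and runs the identical $2\times2$ case check.

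Part (iii) is where your proposal diverges substantially, and there is a genuine gap. The paper's proof is two lines: $\Gamma=\Gamma_0$ means $\Gamma$ is transposition-free, so by \cite[Theorem~1.4]{witd23} the group $A(\Gamma)$ has $R_\infty$-nilpotency index $2$, hence $G_\Gamma$ has $R_\infty$; the commensurability statement then follows from Proposition~\ref{prop:isoGradLieAlgImplicationRinfty}. You instead attempt a direct eigenvalue argument and reduce everything to the combinatorial claim that \emph{in a transposition-free graph with an edge, every $\sigma\in\Aut(\Gamma)$ admits an edge either inside a single $\langle\sigma\rangle$-orbit or between two orbits of equal size}. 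You acknowledge this as ``the main obstacle'' and only sketch a proof. Your sketch (``coprime orbit sizes force complete bipartite joins, producing $\sim$-equivalent vertices'') handles the coprime case but says nothing when the two orbit sizes share a common factor --- e.g.\ sizes $2$ and $4$, where a single edge-orbit does \emph{not} give a complete bipartite join. The lemma may well be true (small experiments suggest so, and one always seems to manufacture $\sim$-equivalent vertices), but it needs a real proof, and reproving \cite[Theorem~1.4]{witd23} from scratch is a detour the paper simply avoids by citation.
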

\begin{proof}
	\begin{enumerate}[label = (\roman*)]
		\item If $V_0=\emptyset$, then all coherent components contain at least two vertices. Hence, it follows that $\xi(\Gamma)\geq 4$. By Theorem \ref{thm:LowerUpperBoundNilpIndex} we get that the $R_{\infty}$--nilpotency index of $A(\Gamma)$ is at least $4$. Thus $G_{\Gamma}=A(\Gamma)/\gamma_2(A(\Gamma))$ does not have the $R_{\infty}$--property.
    
        Let us assume that $V_0\neq \emptyset$. By Lemma 6.2 in \cite{witd23} it follows that there exist monic polynomials $p_\lambda(X)\in \Z[X]$ (for $\lambda\in \Lambda\setminus (V_0/\sim)$) of degree $|\lambda|\geq 2$ such that, if $\alpha_{\lambda 1},\dots,\alpha_{\lambda |\lambda|}\in \C$ are the zeros of $p_\lambda(X)$, it holds for all $\lambda,\lambda'\in \Lambda\setminus (V_0/\sim)$ (possibly equal), $i=1,\dots,|\lambda|$ and $j=1,\dots,|\lambda'|$ that
        \begin{equation}\label{eq:eigenvalueConstruction}
            \alpha_{\lambda i}\neq \pm 1,\: \alpha_{\lambda i}\alpha_{\lambda' j}\neq 1 \quad \text{and} \quad \alpha_{\lambda 1}\dots\alpha_{\lambda|\lambda|}=-1.
        \end{equation}
        
        Denote for any $\lambda\in \Lambda$ with $A_\lambda\in \Gl_{|\lambda|}(\Z)\subset \Gl(\Sp_\Q(\lambda))$ the linear map for which the matrix with respect to the $\Z$-basis $(\lambda,<)$ is the companion matrix of $p_\lambda(X)$ if $\lambda\not\in V_0/\sim$ or is $\begin{pmatrix}-1\end{pmatrix}$ if $\lambda \in V_0/\sim$. Combining these linear maps, leads to the linear map 
        \[A:=\prod_{\lambda\in \Lambda} A_\lambda\in \Gl_{|V|}(\Z)\subset \prod_{\lambda\in \Lambda}\Gl(\Sp_\Q(\lambda)).\]
        Recall that we defined a projection map
        \begin{equation}\label{eq:projectionPi}
            \pi:\Aut_g(L^\Q(\Gamma, 2)) \to \mathcal{G}_\Gamma: f \mapsto f|_{\Sp_\Q(V)}.
        \end{equation}
        Lemma 3.8 in \cite{witd23} argues that there exists an automorphism $\varphi\in \Aut(G_\Gamma)$ such that the induced automorphism $\olvarphi \in \Aut_g(L^\Q(\Gamma, 2))$ projects onto $\pi(\olvarphi)=A$. By construction, we know that $\olvarphi\vert_{\Sp_\Q(V)}=\olvarphi\vert_{\Sp_\Q(V_0)}\times\olvarphi\vert_{\Sp_\Q(V\setminus V_0)}$ is diagonalizable. Hence, we can fix a basis of eigenvectors $\mathcal{V}_1\subset \Sp_\Q(V_0)$ of $\olvarphi\vert_{\Sp_\Q(V_0)}$ and $\mathcal{V}_2\subset \Sp_\Q(V\setminus V_0)$ of $\olvarphi\vert_{\Sp_\Q(V\setminus V_0)}$. Thus $\mathcal{V}:=\mathcal{V}_1\cup \mathcal{V}_2$ is a basis of eigenvectors of $\olvarphi\vert_{\Sp_\Q(V)}$ for $\Sp_\Q(V)$. Note that $[\mathcal{V},\mathcal{V}]$ is a generating set of $\gamma_2(L^\Q(\Gamma, 2))$. Since $[\Sp_\Q(V_0),\Sp_\Q(V_0)]=\Sp_\Q(E_0)$ is trivial, it follows that
        \[[\mathcal{V},\mathcal{V}]=[\mathcal{V}_1,\mathcal{V}_2]\cup [\mathcal{V}_2,\mathcal{V}_2].\]
        Note that the non-zero brackets in $[\mathcal{V}_1,\mathcal{V}_2]$ and $[\mathcal{V}_2,\mathcal{V}_2]$ are eigenvectors of $\olvarphi$. Moreover, the corresponding eigenvalues of the latter are products of two (possibly equal) $\alpha_{\lambda i}$ in Equation (\ref{eq:eigenvalueConstruction}) and thus are not equal to 1. Since $\olvarphi\vert_{\Sp_\Q(V_0)}=-\Id$, the eigenvalues corresponding with the eigenvectors in $[\mathcal{V}_1,\mathcal{V}_2]$ are of the form $-\alpha_{\lambda i}$ and are thus by Equation (\ref{eq:eigenvalueConstruction}) also not equal to 1. The non-zero brackets in $[\mathcal{V},\mathcal{V}]$ form a basis of eigenvectors of $\olvarphi$ for $\Sp_\Q(E)=\gamma_2(L^\Q(\Gamma, 2))$. Hence, $\mathcal{V}\cup [\mathcal{V},\mathcal{V}]$ forms a basis of $L^\Q(\Gamma, 2)$ of eigenvectors of $\olvarphi$ with all eigenvalues not equal to 1. By Lemma \ref{lem:eigenvalue1} it now follows that $R(\varphi)<\infty$ which concludes the proof.
        
		\item Fix any $m\in \N \setminus \{0, 1\}$ and any edge $e_0 = \{v_0,w_0\} \in E_0$ (with $v_0,w_0\in V_0$). Note that the equivalence class $[e_0] =\{\{v_0,w_0\}\}\subset E$ is a singleton, by definition. Define $k:E\to \N_0$ by setting
		\[ k:E\to \N_0:e\mapsto \begin{cases}
			m &\text{if } e=\{v_0,w_0\}\\
			1 &\text{if } e\neq \{v_0,w_0\}
		\end{cases}.\]
		By Lemma \ref{lem:structure GGammak} it follows that $\vert G_{\Gamma(k)}:G_{\Gamma}\vert=\prod_{e\in E}k(e)=m$. We prove that $G_{\Gamma(k)}$ has the $R_{\infty}$--property.

        First, recall the definition of $\Aut(\Gamma(k))$ in Equation (\ref{eq:autGammak}) and note that with the above choice of $k$, for any $\sigma \in \Aut(\Gamma(k))$, it holds that $\sigma_E(e_0) = e_0$. Next, take any automorphism $\varphi \in \Aut(G_{\Gamma(k)})$. Recall the definition of $\pi$ from Equation (\ref{eq:projectionPi}). Since $\pi$ is in fact bijective, it makes sense to write $\pi^{-1}(g)$ for an element $g \in \mathcal{G}_\Gamma$.
        
        By Corollary \ref{cor:inducedAutoLiesInLAGweights}, we know that $\pi(\overline{\varphi})$ lies in the linear algebraic group $\mathcal{G}_{\Gamma(k)} \subset \mathcal{G}_\Gamma$ (over $\Q$). Thus, the same must hold for its semi-simple part $\pi(\overline{\varphi})_s$. By \cite[Theorem 4.3, chapter VIII]{hoch81-1} and Lemma \ref{lem:linearAlgebraicGroupGGammak}, there must exist an $h \in \mathcal{G}_{\Gamma(k)}$ such that $g := h \pi(\overline{\varphi})_s h^{-1}$ lies in $\mathcal{L} \subset \mathcal{G}_{\Gamma(k)}$. Note that $g$ is thus represented by a matrix of the form
        \[ P(\sigma) \begin{pmatrix}
		      A_1 &  0& \hdots & 0 \\
		      0 & A_2 & \ddots & \vdots \\
		      \vdots & \ddots & \ddots & 0 \\
		      0 & \hdots & 0 & A_r
	    \end{pmatrix}, \]
        where $\sigma \in \Aut(\Gamma(k))$ (and thus $\sigma_E(e_0) = e_0$) and $A_i \in \Gl_{|\lambda_i|}(\Q)$ for all $i =1, \ldots, r$. Note that from the fact that $\sigma_E(e_0) = e_0$ and $[v_0], [w_0]$ are singletons, the subspace $\Sp_\Q\{v_0, w_0\}$ is preserved by $g$. Now, since $\pi(\overline{\varphi})$ is integer-like, so is $g$ and since $g$ is given by a matrix over $\Q$ of the above form, it follows that each $A_i$ is integer-like. In particular, if $A_i$ has dimension $1 \times 1$, it must be equal to $(\pm 1)$. This implies that if we restrict $g$ to $\Sp_\Q\{v_0, w_0\}$, we get with respect to $v_0, w_0$, the matrix representation 
        \[ \begin{pmatrix}
            a & 0\\
            0 & b
        \end{pmatrix} \quad \text{or} \quad \begin{pmatrix}
            0 & 1\\
            1 & 0
        \end{pmatrix}\begin{pmatrix}
            a & 0\\
            0 & b
        \end{pmatrix},\]
        with $a,b \in \{-1, 1\}$. Let us prove that in either case, the automorphism $\pi^{-1}(g) \in \Aut_g(L^\Q(\Gamma, 2))$ has an eigenvalue 1.
        
        In the first case, if either $a$ or $b$ is equal to $1$, clearly $g$ has an eigenvalue 1 and thus also $\pi^{-1}(g)$ has. If both $a$ and $b$ are equal to $-1$, then the Lie bracket $[v_0, w_0]$ will be an eigenvector of $\pi^{-1}(g)$ with eigenvalue 1. Note that $[v_0, w_0]$ is non-zero since $\{v_0, w_0\} \in E$.

        In the second case, if $a = b = 1$, then $v_0+w_0$ will be an eigenvector of $g$ with eigenvalue 1. If $a = b = -1$, then $v_0 - w_0$ will be an eigenvector of $g$ with eigenvalue 1. In both cases we thus also get that $\pi^{-1}(g)$ has an eigenvalue 1. If $a$ and $b$ do not have the same sign, then the (non-zero) Lie bracket $[v_0, w_0]$ will be an eigenvector of $\pi^{-1}(g)$ with eigenvalue 1.

        We can thus conclude that $\pi^{-1}(g)$ always has an eigenvalue 1 and thus since they have the same eigenvalues, the same holds for $\overline{\varphi} \in L^\Q(\Gamma, 2)$. We can conclude by Lemma \ref{lem:eigenvalue1} that $R(\varphi) = \infty$ and thus, since $\varphi$ was an arbitrary automorphism of $G_{\Gamma(k)}$, that $G_{\Gamma(k)}$ has the $R_\infty$--property.
        
		\item If $\Gamma_0=\Gamma$, then all coherent components are singletons (or equivalently $\Gamma$ is transposition-free). In \cite[Theorem 1.4]{witd23} it is proven that $A(\Gamma)$ has $R_{\infty}$--nilpotency index 2. Thus $G_{\Gamma}$ has the $R_{\infty}$--property. By Proposition \ref{prop:isoGradLieAlgImplicationRinfty} the result follows.
	\end{enumerate}
\end{proof}

We give some examples where we can say more than the statement in Theorem \ref{thm:MainResult}.
\begin{example}[$\Gamma_0\subsetneq \Gamma$ has edges and $G_{\Gamma}$ has the $R_{\infty}$--property]
	Let us consider the next graph $\Gamma$. We also show its quotient graph $\olGamma$.\\
	\hbox to \hsize{\hfil{
			\begin{tikzpicture}		
				\node[regular polygon, regular polygon sides=4, minimum height=1.5cm] (g1) at (0,0) {};
				\foreach \n in {1,...,4} {\node[outer sep=0, inner sep=0] (g1\n) at (g1.corner \n) {};
					\draw[fill] (g1\n) circle (2.25pt);};
				\draw[edge] (g13)--(g12)--(g11)--(g14)--(g12);
				\node[anchor=south east,xshift=-5pt,yshift=5pt] at (g12.north west) {$\Gamma$};
				
				\def\grootte{{"2","1","1"}};
				\node[regular polygon, regular polygon sides=4, minimum height=1.5cm] (g2) at (3,0) {};
				\node[regular polygon, regular polygon sides=4, minimum height=2cm] (l2) at (3,0) {};
				\foreach \n in {1,...,4} {\node[outer sep=0, inner sep=0] (g2\n) at (g2.corner \n) {};};
				\foreach \n in {1,...,3} {\draw[fill] (g2\n) circle (2.25pt);
					\node[outer sep=0, inner sep=0] (l2\n) at (l2.corner \n) {\strut\pgfmathprint{\grootte[\n-1]}};};
				\draw[edge] (g21) to [out=230,in=310,looseness=50] (g21)--(g22)--(g23);
				\node[anchor=south east,xshift=-5pt,yshift=5pt] at (g22.north west) {$\olGamma$};
			\end{tikzpicture}
		}\hfil}
	In \cite[Theorem 4.4 and Example 4.5]{dl23-1}, we proved that $G_{\Gamma}$ has the $R_{\infty}$--property (where we used the complement graph to represent $G_{\Gamma}$). Proposition \ref{prop:isoGradLieAlgImplicationRinfty} now tells us that all groups commensurable with $G_{\Gamma}$ have the $R_{\infty}$--property. In particular, for any weight function $k:E\to \N_0$ on the edges of $\Gamma$ it holds that $G_{\Gamma(k)}$ has the $R_{\infty}$--property.
			%
			%
\end{example}

\begin{example}[$G_{\Gamma}$ does not have the $R_{\infty}$--property, but $G_{\Gamma(k_n)}$ does]

    \label{ex:mainCounterexample}    
	We could for example consider the next graph with weight function $k_n:E\to \N_0$ (for any $n\in \N\setminus \{0,1\}$) on its edges. We fixed a total order of $V$, $E$ and $\Lambda=V/\sim$ as introduced in section \ref{sec:relationsOnVerticesAndEdges}.
 
	\begin{figure}[H]
		\begin{subfigure}{0.5\textwidth}
			\centering
                \captionsetup{justification=centering}
			\begin{tikzpicture}[main_node/.style={circle,draw,fill,minimum size=4.5pt,inner sep=0, outer sep=0]}]
				\def\lengte{1.5cm}
				\node[main_node] (0) at (0,{2*\lengte}) {};
				\node[anchor=south west,xshift=2pt,yshift=2pt,outer sep=0, inner sep=0] at (0.north east) {\scalebox{0.8}{$v_1$}};
				\node[main_node] (1) at (0,\lengte) {};
				\node[anchor=north west,xshift=2pt,yshift=-2pt,outer sep=0, inner sep=0] at (1.south east) {\scalebox{0.8}{$v_7$}};
				\node[main_node] (2) at (0,0) {};
				\node[anchor=north west,xshift=2pt,yshift=-2pt,outer sep=0, inner sep=0] at (2.south east) {\scalebox{0.8}{$v_2$}};
				\node[main_node] (3) at (\lengte,{2*\lengte}) {};
				\node[anchor=south west,xshift=2pt,yshift=2pt,outer sep=0, inner sep=0] at (3.north east) {\scalebox{0.8}{$v_3$}};
				\node[main_node] (4) at (\lengte,\lengte) {};
				\node[anchor=north west,xshift=2pt,yshift=-2pt,outer sep=0, inner sep=0] at (4.south east) {\scalebox{0.8}{$v_8$}};
				\node[main_node] (5) at (\lengte,0) {};
				\node[anchor=north west,xshift=2pt,yshift=-2pt,outer sep=0, inner sep=0] at (5.south east) {\scalebox{0.8}{$v_4$}};
				\node[main_node] (6) at ({2*\lengte},{2*\lengte}) {};
				\node[anchor=south west,xshift=2pt,yshift=2pt,outer sep=0, inner sep=0] at (6.north east) {\scalebox{0.8}{$v_5$}};
				\node[main_node] (7) at ({2*\lengte},\lengte) {};
				\node[anchor=north west,xshift=2pt,yshift=-2pt,outer sep=0, inner sep=0] at (7.south east) {\scalebox{0.8}{$v_9$}};
				\node[main_node] (8) at ({2*\lengte},0) {};
				\node[anchor=north west,xshift=2pt,yshift=-2pt,outer sep=0, inner sep=0] at (8.south east) {\scalebox{0.8}{$v_6$}};
				
				\path[draw, line width=0.75pt]
				(0) edge 	node[right] {\footnotesize{1}} (1) 
				(1) edge 	node[right] {\footnotesize{1}} (2)
				(1) edge 	node[above] {\footnotesize{$n$}} (4) 
				(3) edge 	node[right] {\footnotesize{1}} (4) 
				(4) edge 	node[right] {\footnotesize{1}} (5) 
				(4) edge	node[above] {\footnotesize{1}} (7) 
				(6) edge	node[right] {\footnotesize{1}} (7) 
				(7) edge	node[right] {\footnotesize{1}} (8)

                (0) edge 	node[left] {\scalebox{0.8}{$e_1$}} (1) 
				(1) edge 	node[left] {\scalebox{0.8}{$e_2$}} (2)
				(1) edge 	node[below] {\scalebox{0.8}{$e_7$}} (4) 
				(3) edge 	node[left] {\scalebox{0.8}{$e_3$}} (4) 
				(4) edge 	node[left] {\scalebox{0.8}{$e_4$}} (5) 
				(4) edge	node[below] {\scalebox{0.8}{$e_8$}} (7) 
				(6) edge	node[left] {\scalebox{0.8}{$e_5$}} (7) 
				(7) edge	node[left] {\scalebox{0.8}{$e_6$}} (8)
				;
			\end{tikzpicture}
            \caption{Graph $\Gamma=(V,E)$ with weights on its edges and with total orders for $V$ and $E$}
		\end{subfigure}
		\begin{subfigure}{0.5\textwidth}
			\centering
                \captionsetup{justification=centering}
			\begin{tikzpicture}[main_node/.style={circle,draw,fill,minimum size=4.5pt,inner sep=0, outer sep=0]}]
				\def\lengte{1.5cm}
				\node[main_node] (0) at (0,\lengte) {};
				\node[anchor=south east,xshift=-2pt,yshift=2pt,outer sep=0, inner sep=0] at (0.north west) {\footnotesize{2}};
				\node[anchor=north west,xshift=2pt,yshift=-2pt,outer sep=0, inner sep=0] at (0.south east) {\scalebox{0.8}{$\lambda_1$}};
				\node[main_node] (1) at (0,0) {};
				\node[anchor=north east,xshift=-2pt,yshift=-2pt,outer sep=0, inner sep=0] at (1.south west) {\footnotesize{1}};
				\node[anchor=south west,xshift=2pt,yshift=2pt,outer sep=0, inner sep=0] at (1.north east) {\scalebox{0.8}{$\lambda_4$}};
				\node[main_node] (2) at (\lengte,\lengte) {};
				\node[anchor=south east,xshift=-2pt,yshift=2pt,outer sep=0, inner sep=0] at (2.north west) {\footnotesize{2}};
				\node[anchor=north west,xshift=2pt,yshift=-2pt,outer sep=0, inner sep=0] at (2.south east) {\scalebox{0.8}{$\lambda_2$}};
				\node[main_node] (3) at (\lengte,0) {};
				\node[anchor=north east,xshift=-2pt,yshift=-2pt,outer sep=0, inner sep=0] at (3.south west) {\footnotesize{1}};
				\node[anchor=south west,xshift=2pt,yshift=2pt,outer sep=0, inner sep=0] at (3.north east) {\scalebox{0.8}{$\lambda_5$}};
				\node[main_node] (4) at ({2*\lengte},\lengte) {};
				\node[anchor=south east,xshift=-2pt,yshift=2pt,outer sep=0, inner sep=0] at (4.north west) {\footnotesize{2}};
				\node[anchor=north west,xshift=2pt,yshift=-2pt,outer sep=0, inner sep=0] at (4.south east) {\scalebox{0.8}{$\lambda_3$}};
				\node[main_node] (5) at ({2*\lengte},0) {};
				\node[anchor=north east,xshift=-2pt,yshift=-2pt,outer sep=0, inner sep=0] at (5.south west) {\footnotesize{1}};
				\node[anchor=south west,xshift=2pt,yshift=2pt,outer sep=0, inner sep=0] at (5.north east) {\scalebox{0.8}{$\lambda_6$}};
				
				\path[draw, line width=0.75pt]
				(0) edge node {} (1) 
				(1) edge node {} (3) 
				(3) edge node {} (2)
				(3) edge node {} (5) 
				(4) edge node {} (5) 
				;
			\end{tikzpicture}
                \caption{Quotient graph $\olGamma=(\Lambda,\olE,\Psi)$\\ with total order on $\Lambda$}
		\end{subfigure}
	\end{figure}
        For any $n\in \N\setminus \{0,1\}$ we consider the group $G_{\Gamma(k_n)}$ as defined in section \ref{sec:definition Ggamma(k)}. By the proof of Theorem \ref{thm:MainResult} (ii) we already know that $G_{\Gamma(k_n)}$ has the $R_{\infty}$--property for any $n\in \N\setminus \{0,1\}$.\\
        We now prove that $G_{\Gamma}$ does not have the $R_{\infty}$--property by giving an explicit automorphism $\varphi\in \Aut(G_{\Gamma})$ with $R(\varphi)<\infty$. Denote with $\sigma:=(v_1\: v_5)(v_7 \: v_9)(v_2\: v_6)\in \Aut(\Gamma)$. Define the matrix $A\in \Gl_2(\Z)$ and $B\in \Gl_9(\Z)$ by
	\[ A:=\begin{pmatrix}
		2 & 1 \\
		1 & 1
	\end{pmatrix} \quad \text{and}\quad
	B:=\begin{pmatrix}
		0 & 0 & A & 0 & 0 & 0 \\
		0 & A & 0 & 0 & 0 & 0 \\
		\mathds{1}_2 & 0 & 0 & 0 & 0 & 0 \\
		0 & 0 & 0 & 0 & 0 & -1 \\
		0 & 0 & 0 & 0 & -1 & 0 \\
		0 & 0 & 0 & 1 & 0 & 0
	\end{pmatrix}=P(\sigma)\cdot \begin{pmatrix}
		\mathds{1}_2 & 0 & 0 & 0 & 0 & 0 \\
		0 & A & 0 & 0 & 0 & 0 \\
		0 & 0 & A & 0 & 0 & 0 \\
		0 & 0 & 0 & 1 & 0 & 0 \\
		0 & 0 & 0 & 0 & -1 & 0 \\
		0 & 0 & 0 & 0 & 0 & -1
	\end{pmatrix}\]
	where the second equality for $B$ will illustrate Lemma \ref{lem:inducedAutoAbAndGamma2}. Define the automorphism $\varphi\in \Aut(G_{\Gamma})$ by defining it on the $\Z$-basis $(V/\gamma_2(G_{\Gamma}),<)$ of $G_{\Gamma}/\gamma_2(G_{\Gamma})$ by the matrix $B\in \Gl_9(\Z)$. One can derive that the matrix of $\varphi\vert_{\gamma_2(G_\Gamma)}$ with respect to the $\Z$-basis $(E,<)$ equals
    \[C:=\begin{pmatrix}
		0 & 0 & -A & 0 & 0 \\
		0 & -A & 0 & 0 & 0 \\
		\mathds{1}_2 & 0 & 0 & 0 & 0 \\
		0 & 0 & 0 & 0 & -1 \\
		0 & 0 & 0 & 1 & 0 
	\end{pmatrix}=P(\sigma_E)D(\varepsilon_\sigma)\begin{pmatrix}
		\mathds{1}_2 & 0 & 0 & 0 & 0 \\
		0 & -A & 0 & 0 & 0 \\
		0 & 0 & -A & 0 & 0 \\
		0 & 0 & 0 & -1 & 0 \\
		0 & 0 & 0 & 0 & 1 
	\end{pmatrix}\]
    where $\sigma_E:=(e_1\: e_5)(e_7\: e_8)(e_2\: e_6)$ and $\varepsilon_\sigma(e_i)=\begin{cases}
        -1 &\text{if } i=7,8\\
        1 &\text{else}
    \end{cases}$. In particular, the eigenvalues of the induced automorphism $\olvarphi\in \Aut(L^\Q(G_\Gamma))$ are precisely the eigenvalues of the matrices $B$ and $C$. Denote with $\alpha_1:=\frac{ 3-\sqrt{5}}{2}$ and $\alpha_2:=\frac{3+\sqrt{5}}{2}$ the eigenvalues of $A$. One can calculate that $B$ has eigenvalues
    \[\alpha_1,\alpha_2,\pm\sqrt{\alpha_1},\pm \sqrt{\alpha_2},-1 \text{ and } \pm i\]
    and the eigenvalues of $C$ are
    \[-\alpha_1,-\alpha_2,\pm\sqrt{\alpha_1}i,\pm \sqrt{\alpha_2}i \text{ and } \pm i.\]
    Since 1 is not an eigenvalue of $\olvarphi$, Lemma \ref{lem:eigenvalue1} implies that $R(\varphi)<\infty$ and thus $G_\Gamma$ does not have the $R_\infty$--property.
\end{example}

\begin{corollary}
	There exist finitely generated torsion-free 2-step nilpotent groups which do not have the $R_\infty$--property but admit for any $n \in \N \setminus \{0, 1\}$ a subgroup of index $n$ that does have the $R_\infty$--property.
\end{corollary}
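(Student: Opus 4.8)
The plan is to take $\Gamma$ and $G_\Gamma$ to be the graph and group from Example~\ref{ex:mainCounterexample}: this $G_\Gamma$ is finitely generated torsion-free $2$-step nilpotent and, by the explicit automorphism constructed there, does \emph{not} have the $R_\infty$-property. What remains is to exhibit, for each $n\in\N\setminus\{0,1\}$, a subgroup of $G_\Gamma$ of index exactly $n$ that \emph{does} have the $R_\infty$-property. Note that this is genuinely more than Theorem~\ref{thm:MainResult}(ii): that theorem produces a \emph{super}group $G_{\Gamma(k)}\supseteq G_\Gamma$ of index $m$, whereas here the needed inclusion goes the other way.

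Fix $n\geq 2$. Let $v_7$ be the vertex of $\Gamma$ forming the singleton coherent component $\lambda_4$ (its neighbours are $v_1$, $v_2$ and $v_8$, where $\{v_8\}=\lambda_5$ is again a singleton component), and let $N_n\leq G_\Gamma$ be the preimage of $n\Z\oplus\Z^{8}$ under $G_\Gamma\twoheadrightarrow G_\Gamma/\sqrt{\gamma_2(G_\Gamma)}\cong\Z^{9}$, the factor $n\Z$ corresponding to the $v_7$-coordinate; equivalently $N_n=\langle v_1,\dots,v_6,v_7^{\,n},v_8,v_9\rangle\cdot\sqrt{\gamma_2(G_\Gamma)}$, so $[G_\Gamma:N_n]=n$. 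Using the description of $G_\Gamma$ in Lemma~\ref{lem:structure GGammak}, one checks (by comparing the defining presentations, injectivity being automatic from equality of Hirsch numbers via Lemma~\ref{lem:Hirsch number}) that $v_7\mapsto v_7^{\,n}$, together with the identity on the other vertices and on the edges, extends to an isomorphism $G_{\Gamma(k_n)}\xrightarrow{\ \sim\ }N_n$, where $k_n\colon E\to\N_0$ assigns weight $n$ to the three edges $e_1=\{v_1,v_7\}$, $e_2=\{v_2,v_7\}$, $e_7=\{v_7,v_8\}$ incident to $v_7$ and weight $1$ to every other edge. Hence it is enough to show that $G_{\Gamma(k_n)}$ has the $R_\infty$-property.

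For this I would run the argument in the proof of Theorem~\ref{thm:MainResult}(ii) with the edge $e_7$ — whose endpoints $v_7,v_8$ lie in singleton coherent components — in the role of $e_0$. First one computes $\Aut(\overline\Gamma)=\{\mathrm{id},\tau\}$, the nontrivial element $\tau$ interchanging the two arms $\lambda_1\text{--}\lambda_4$ and $\lambda_3\text{--}\lambda_6$ of the quotient graph; in particular $\tau$ carries the edge $\{\lambda_4,\lambda_5\}$, which is the singleton class $[e_7]$ under the identification~\eqref{eq:identificationEbarAndM}, to $\{\lambda_5,\lambda_6\}=[e_8]$. Since $d_1([e_7])=k_n(e_7)=n\neq 1=k_n(e_8)=d_1([e_8])$, the condition~\eqref{eq:autGammak} defining $\Aut(\Gamma(k_n))$ excludes $\tau$, so $p(\sigma)=\mathrm{id}_{\overline\Gamma}$ for every $\sigma\in\Aut(\Gamma(k_n))$; such a $\sigma$ then fixes every coherent component setwise and therefore fixes $v_7$ and $v_8$. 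Now let $\varphi\in\Aut(G_{\Gamma(k_n)})$ with induced (integer-like) graded automorphism $\overline\varphi\in\Aut_g(L^\Q(\Gamma,2))$. By Corollary~\ref{cor:inducedAutoLiesInLAGweights} $\pi(\overline\varphi)$ lies in $\mathcal{G}_{\Gamma(k_n)}$, hence so does its semisimple part, and by Lemma~\ref{lem:linearAlgebraicGroupGGammak} together with \cite[Theorem~4.3, chapter~VIII]{hoch81-1} the latter is conjugate inside $\mathcal{G}_{\Gamma(k_n)}$ to some $g=P(\sigma)\cdot\mathrm{diag}(A_1,\dots,A_6)\in\mathcal{L}$ with $\sigma\in\Aut(\Gamma(k_n))$ and $A_i\in\Gl_{|\lambda_i|}(\Q)$; since $g$ is integer-like, each $A_i$ is integer-like, so the $1\times1$ blocks $A_4,A_5$ (corresponding to $v_7$ and $v_8$) equal $(\pm1)$. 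If $A_4=(1)$ or $A_5=(1)$, then $v_7$ resp.\ $v_8$ is an eigenvector of $g$ for the eigenvalue $1$; otherwise $A_4=A_5=(-1)$ and, since $\{v_7,v_8\}=e_7\in E$, the nonzero bracket $[v_7,v_8]\in\gamma_2(L^\Q(\Gamma,2))$ is an eigenvector of $\pi^{-1}(g)$ for the eigenvalue $(-1)(-1)=1$. In all cases $\pi^{-1}(g)$, and hence the conjugate automorphism $\overline\varphi$, has eigenvalue $1$ on $L^\Q(\Gamma,2)$, so $R(\varphi)=\infty$ by Lemma~\ref{lem:eigenvalue1}. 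As $\varphi$ was arbitrary, $G_{\Gamma(k_n)}\cong N_n$ has the $R_\infty$-property; together with the fact that $G_\Gamma$ itself lacks this property, this proves the statement with $G_\Gamma$ as the sought-after group.

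I expect the only non-routine part to be the Jordan-decomposition bookkeeping in the last step: one must observe that the algebraic-group isomorphism $\pi$ of Remark~\ref{rem:RestrictionToAbelianization} intertwines Jordan decompositions (so that the semisimple part of $\overline\varphi$ really corresponds to $\pi(\overline\varphi)_s$), that restricting the semisimple operator $g$ to the $g$-invariant line $\Sp_\Q(v_7)$ forces its eigenvalue into $\{\pm1\}$ because both $g$ and $g^{-1}$ are integer-like, and that $\overline\varphi$ and its semisimple part carry the same eigenvalue multiset on the whole of $L^\Q(\Gamma,2)$ (not just on $\Sp_\Q(V)$), so that producing an eigenvalue $1$ for $\pi^{-1}(g)$ on $\gamma_2$ already suffices. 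By contrast, the computation $\Aut(\overline\Gamma)=\{\mathrm{id},\tau\}$ and the identification $N_n\cong G_{\Gamma(k_n)}$ are straightforward verifications.
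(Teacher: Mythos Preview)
Your proof is correct and in fact more careful than the paper itself. The paper places the corollary immediately after Example~\ref{ex:mainCounterexample} without further argument, evidently regarding it as an immediate consequence of that example together with Theorem~\ref{thm:MainResult}(ii). You are right to observe that this is not quite enough: Lemma~\ref{lem:structure GGammak} and Theorem~\ref{thm:MainResult}(ii) exhibit $G_\Gamma$ as an index-$n$ subgroup of $G_{\Gamma(k_n)}$, so the inclusion runs the wrong way for the corollary as stated.

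Your remedy---passing to $N_n=\langle v_1,\dots,v_6,v_7^{\,n},v_8,v_9\rangle\cdot\sqrt{\gamma_2(G_\Gamma)}$, identifying it with $G_{\Gamma(k_n')}$ for the weight function putting weight $n$ on the three edges incident to $v_7$, and then rerunning the proof of Theorem~\ref{thm:MainResult}(ii) with $e_0=e_7$---is exactly the right fix, and the verification that $\Aut(\overline\Gamma)=\{\mathrm{id},\tau\}$ with $\tau$ excluded by the determinant-divisor condition on $[e_7]$ versus $[e_8]$ is clean. The simplification you gain (namely $p(\sigma)=\mathrm{id}$, so that $\sigma$ fixes $v_7$ and $v_8$ individually and the ``swap'' case of the paper's proof disappears) is genuine. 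One tiny precision: what is integer-like is the diagonal block $P(\sigma|_{\lambda_i})A_i$ rather than $A_i$ itself, but since $p(\sigma)=\mathrm{id}$ the permutation block on each singleton component is trivial, so for $\lambda_4$ and $\lambda_5$ this makes no difference.
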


\newpage
\bibliographystyle{alpha}
\bibliography{references}
\end{document}